\numberwithin{equation}{section}
\numberwithin{figure}{section}
\newtheorem{thm}{Theorem}[section]
\newtheorem{lem}[thm]{Lemma}
\newtheorem{prop}[thm]{Proposition}
\theoremstyle{definition}
\newtheorem{defn}[thm]{Definition}
\theoremstyle{remark}
\newtheorem{rem}[thm]{Remark}
\newcommand{\say}[1]{``#1''}
\renewcommand{\C}{{\mathbb C}}
\newcommand{\D}{{\mathbb D}}
\newcommand{\T}{{\mathbb T}}
\newcommand{\R}{{\mathbb R}}
\newcommand{\Z}{{\mathbb Z}}
\newcommand{\N}{{\mathbb N}}
\newcommand{\calS}{\mathcal{S}}
\newcommand{\calH}{{\mathcal H}}
\newcommand{\calD}{{\mathcal D}}
\newcommand{\calB}{{\mathcal B}}
\newcommand{\calK}{{\mathcal K}}
\newcommand{\calU}{{\mathcal U}}
\newcommand{\fs}{\mathfrak{s}}
\newcommand{\diff}{{\mathrm d}}
\newcommand{\diffs}{\mathrm{ds}}
\newcommand{\m}{{m}}
\newcommand{\diffA}{\mathrm{d}\m}
\newcommand{\imag}{{i}}
\newcommand{\Ordo}{\mathrm{O}}
\newcommand{\ordo}{\mathrm{o}}
\newcommand{\e}{e}
\renewcommand{\Re}{\operatorname{Re}}
\renewcommand{\Im}{\operatorname{Im}}
\newcommand{\Biglog}{\mathbf{\mathrm{Log}}}
\newcommand{\BergP}{\Pi}
\newcommand{\hker}{K}
\newcommand{\Bhker}{B}
\newcommand{\BergK}{\calB}
\newcommand{\kerK}{\calK}
\newcommand{\kerPol}{\calS_\n}
\newcommand{\modkerK}{\widehat{\kerK}}
\newcommand{\holkerK}{\mathscr{K}}
\renewcommand{\ker}{K}
\newcommand{\n}{N}
\newcommand{\supp}{\mathrm{supp}\,}
\renewcommand{\epsilon}{\varepsilon}
\newcommand{\densityCurve}{v}
\newcommand{\densityBulk}{u}
\newcommand{\class}[1]{\{#1\}}
\numberwithin{equation}{section}
\begin{document}

\title[Universality of outliers]
{Universality of outliers in weakly confined 
Coulomb-type systems}

\author[Butez]{Raphael Butez}
\address{\noindent Butez: Université de Lille \\
Département Mathématiques Cité Scientifique \\
59009 Villeneuve-d’Ascq \\ France}
\email{raphael.butez@univ-lille.fr}

\author[Garcia-Zelada]{David Garc{\'i}a-Zelada}
\address{Garc{\'i}a-Zelada: Laboratoire de Probabilités \\ 
Statistique et Modélisation\\ UMR
CNRS 8001\\ Sorbonne Université\\ 4 Place Jussieu\\ 75005 Paris\\ France}
\email{david.garcia-zelada@sorbonne-universite.fr}

\author[Nishry]{Alon Nishry}
\address{Nishry: School of Mathematical Sciences, 
Tel Aviv University, Tel Aviv, Israel}
\email{alonish@tauex.tau.ac.il}

\author[Wennman]{Aron Wennman}
\address{Wennman:
Department of Mathematics \\
KTH Royal Institute of Technology \\
S--100 44 Stockholm\\
Sweden} 
\email{aronw@kth.se}

\subjclass[2020]{42C05, 60G55, 30H20 (primary); 
41A60 (secondary).}

\keywords{Outliers, Bergman kernels, Coulomb gases, 
determinantal point processes,
random polynomials, orthogonal polynomials}

\begin{abstract}
This work concerns \emph{weakly confined} 
particle systems in the plane, characterized by
a large number of outliers away from a 
\emph{droplet} where the bulk of 
the particles accumulate
in the many-particle limit. 
We are interested in the asymptotic behavior 
of outliers for two classes of point processes: 
Coulomb gases at determinantal inverse 
temperature confined by a regular background, 
and a class of random polynomials. 

We observe that 
the limiting outlier process
only depends on the \emph{shape} of the uncharged region
containing them, and the global net excess charge.
In particular, for a determinantal Coulomb gas confined 
by a sufficiently regular background measure, 
the outliers in a simply connected uncharged 
region converge to the corresponding Bergman point process.
For a finitely connected uncharged region 
$\Omega$, a family of limiting outlier processes arises, 
indexed by the (Pontryagin) dual of the fundamental group of $\Omega$. 
Moreover, the outliers in different uncharged regions
are asymptotically independent, even if the 
regions have common boundary points.
The latter result is a manifestation of screening properties of the 
particle system.
\end{abstract}

\maketitle

\section{Introduction and main results}
\label{s:intro}
\subsection{Weakly confined Coulomb gases}
\label{ss:weak-confine}
Let $\mu$ be a probability measure on the complex plane $\C$.
We are interested in
a system of $\n$ particles
of unit negative charge
attracted to the (positive)
charge distribution
$\kappa_\n \mu$,
where $\kappa_\n$ is such that
$\n< \kappa_\n\le \n+1$.
More precisely,
we consider the 
point process on $\mathbb C$
induced by the Boltzmann-Gibbs measure
\begin{equation}\label{eq:law}
\diff\mathbb{P}_{N}(z_1,\dots,z_\n)=\frac{1}{\mathcal Z_\n}
\e^{-2\calH_\n(z)}\diff \m_{\C^\n}(z),\qquad (z_1,\dots,z_\n)\in\C^\n
\end{equation}
where the Hamiltonian $\calH_\n$ is given by
\[
\calH_\n(z_1,\dots,z_\n)=-\sum_{i<j}\log|z_i-z_j| +
\kappa_\n\sum_{j}U^\mu(z_j),\qquad (z_1,\dots,z_\n)\in\C^\n,
\]
and where $\mathcal Z_\n$ is a normalizing constant. 
Throughout, $\m_{\C^\n}$ 
denotes the Lebesgue measure on $\C^\n$ and $U^\mu$ is
the logarithmic potential of the measure $\mu$
defined as
\[
U^\mu(z)=\int\log|z-w|\diff\mu(w),\qquad z\in\C.
\]
We denote by $\Theta_\n$ a random variable
with law
$\mathbb P_\n$. 
As the particles are interchangeable
and almost surely distinct, we may abuse 
notation and think of the random vector
as a simple point 
process $\Theta_{\n}=\{z_1,\ldots,z_\n\}$ 
(see Appendix~\ref{app:David}).

The point process
$\Theta_\n$ is sometimes called
a \emph{weakly confined
Coulomb gas} or a \emph{jellium} associated with $\mu$.
The idealized total background charge 
$\kappa_\n=\n$ would correspond to
global charge neutrality of the system, 
but we ask that $\kappa_\n>\n$ for
the law $\mathbb P_\n$ to be well-defined
without spatial truncation.
This choice is discussed by Jancovici in 
\cite[Section 2.1]{Jancovici84}
for $\kappa_\n=\n+1$ and also by Chafa{\"i}, Garc\'ia-Zelada and Jung in
\cite{ChafaiGarciaZeladaJung}.
The terminology
\emph{weakly confining}
is related to the growth of the potential
at infinity
$U^\mu(z)=\log|z| + \Ordo(1)$
(cf.\ Hardy \cite{Hardy}).
If the total background charge would instead be 
so large that $\liminf_\n\n^{-1}\kappa_\n>1$
and if $U^\mu$ is regular enough
then the gas would be strongly confined to a 
compact set $\mathcal{S}\subset\mathrm{supp}(\mu)$ 
(the \emph{droplet}), i.e.,
with high probability all particles 
would be found within a distance 
$\n^{-\frac12}\log\n$ from that set, see Ameur's
paper \cite{AmeurLocal}.

For both the weakly and strongly confined models, as 
the number $\n$ of particles tends to infinity,
the system arranges itself in equilibrium at
the macroscopic level. For the weakly confined gas this amounts to the 
convergence in law of the empirical measure
\begin{equation}
\label{eq:linear-stat-normalized}
\frac{1}{\n}\sum_{z\in\Theta_\n}\delta_{z} \longrightarrow \mu
\end{equation}
which reflects the fact that the particles are attracted
by the charge distribution $\mu$ and, to leading order, 
are able to neutralize it in the limit. 

The factor $2$ appearing in front of the Hamiltonian 
in \eqref{eq:law} plays 
a crucial role in our analysis. Indeed, 
the more general Coulomb gas obtained by replacing
$2$ by an \say{inverse temperature} $\beta>0$ is also well-studied, but
only for $\beta=2$ is 
the resulting point process 
a determinantal point process. 
Specifically, the Coulomb gas \eqref{eq:law} is a 
determinantal point process with correlation kernel
\begin{equation}\label{eq:kernel}
\mathcal{K}_\n(z,w)= \sum_{k=0}^{\n-1} P_{k,\n}(z)\overline{P_{k,\n}(w)} 
e^{-\kappa_\n U^\mu(z)} e^{-\kappa_\n \overline{U^\mu(w)}}, 
\quad z,w \in \mathbb{C}
\end{equation}
where $(P_{k,\n})_{k \leq \n-1}$ is any orthonormal basis of 
$L^2(\C, e^{-2\kappa_\n U^\mu}) \cap \C_{\n-1}[X]$,
and background measure given by Lebesgue measure 
on the plane, see Appendix~\ref{app:David}.

\begin{figure}[t!]
\centering
\includegraphics[width=.64\linewidth]{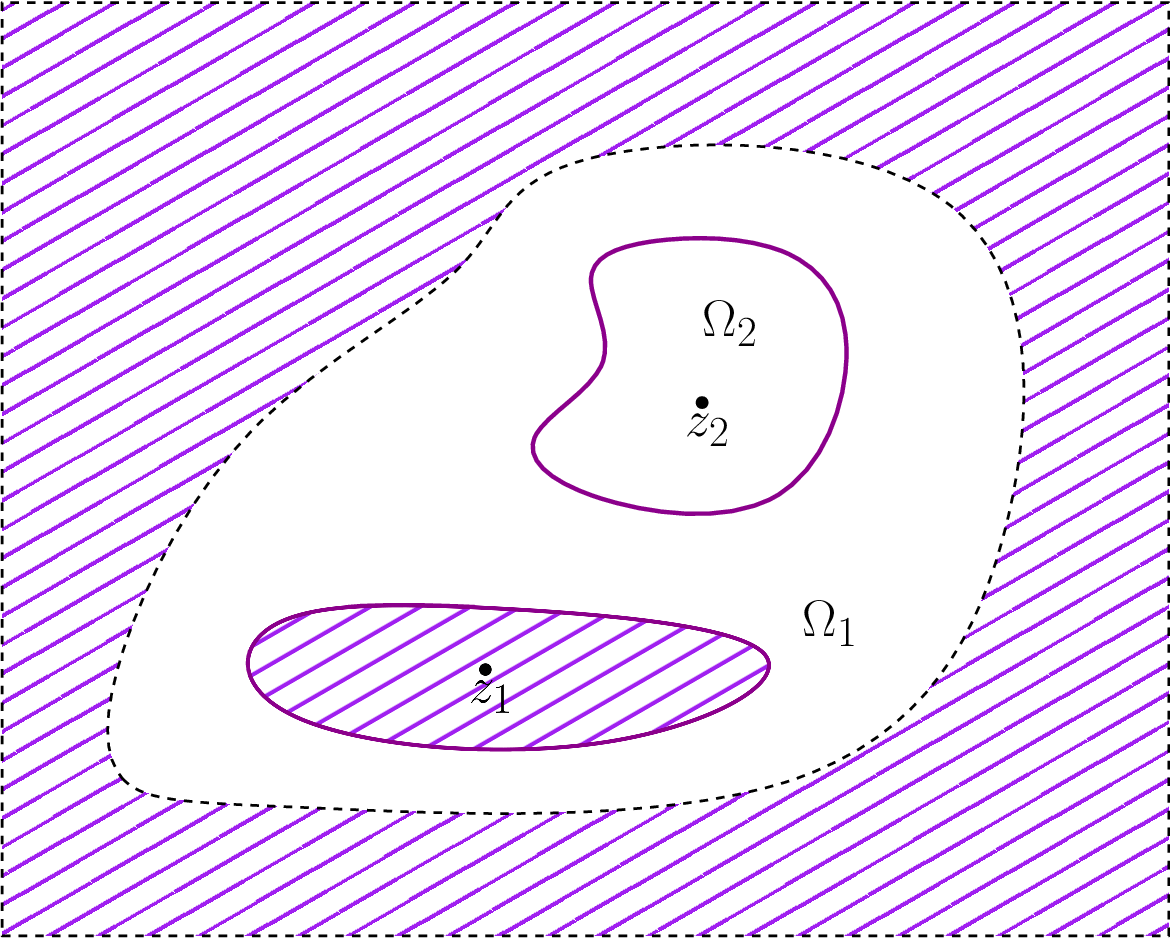}
\caption{Illustration of the support of admissible measures, with two
uncharged domains $\Omega_1$ and $\Omega_2$. 
For Theorem~\ref{thm:NonSimplyConnected}
and the the component $\Omega=\Omega_1$,
a possible choice of $z_1$ and $z_2$ is indicated.}
\label{fig:charge}
\end{figure}

\subsection{Bergman universality and independence.}\label{sub:Outliers}
Informally, we are interested in the
\emph{outlier point processes}
for the Coulomb gas $\Theta_\n$. 
Specifically, if $\Omega$ 
is an \emph{uncharged region}, i.e., 
a connected component of $\C\setminus\supp(\mu)$,
we want to determine
the asymptotic distribution of the point process
$\Theta_\n$
restricted to $\Omega$, namely
\begin{equation}
\label{eq:outlier}
\Phi_\n=\Theta_\n\cap \Omega,\qquad \n\ge 1.
\end{equation}
We say that $\Phi_\n$ converges to a point 
process $\Psi$ \emph{in law}, if for 
any $f\in C_c(\Omega)$, we have the convergence
\begin{equation}
\label{eq:outlier-convergence}
\sum_{z\in\Phi_\n} f(z)\to\sum_{z\in\Psi}f(z)
\end{equation}
in distribution. We call $\Psi$ the 
\emph{limiting macroscopic outlier process in $\Omega$}.
Notice that we do not normalize the linear statistics in 
\eqref{eq:outlier-convergence}. For the usual (strongly confined) 
Coulomb gases, there is no such limiting process.
However, in the setting of weak confinement, 
there tends to be a relatively
large number of particles at positive distance from $\supp(\mu)$.
In the recent work \cite{BG}, the first two authors
studied these systems under the assumption that the measure 
$\mu$ is radial, and it was found that
the outliers are governed by the so-called 
\emph{Bergman point process} of the uncharged region.

We will use methods from potential theory 
that require some regularity of $U^\mu$.
This is most sensitive near the boundary 
$\partial\Omega$, but to avoid technicalities
we require regularity also elsewhere. 
The main example we have in mind
is the
case where $\mathrm d \mu = \densityBulk\, \diff m_D$
for some nice enough density $\densityBulk>0$ and domain $D$
but we allow for charges
situated on curves as well.
Specifically, we require that 
the support of $\mu$
is a disjoint union
$\overline{D}\cup\Gamma$,
where $D$ is a domain with analytic boundary 
and where $\Gamma\cup\partial D$ is a finite disjoint 
union of closed analytic Jordan curves 
(see Figure~\ref{fig:charge}), $\Gamma\cap D=\emptyset$.

We use the standard notation $C^\infty$ and $C^\omega$ for the classes of
infinitely differentiable and real-analytic functions, 
respectively, and denote the arc length
measure on a disjoint union of curves $\gamma$ by $\sigma_\gamma$
and the restriction of the Lebesgue measure to $D$ by $m_D$. 

\begin{figure}[t!]
\centering
\includegraphics[width=.64\linewidth]{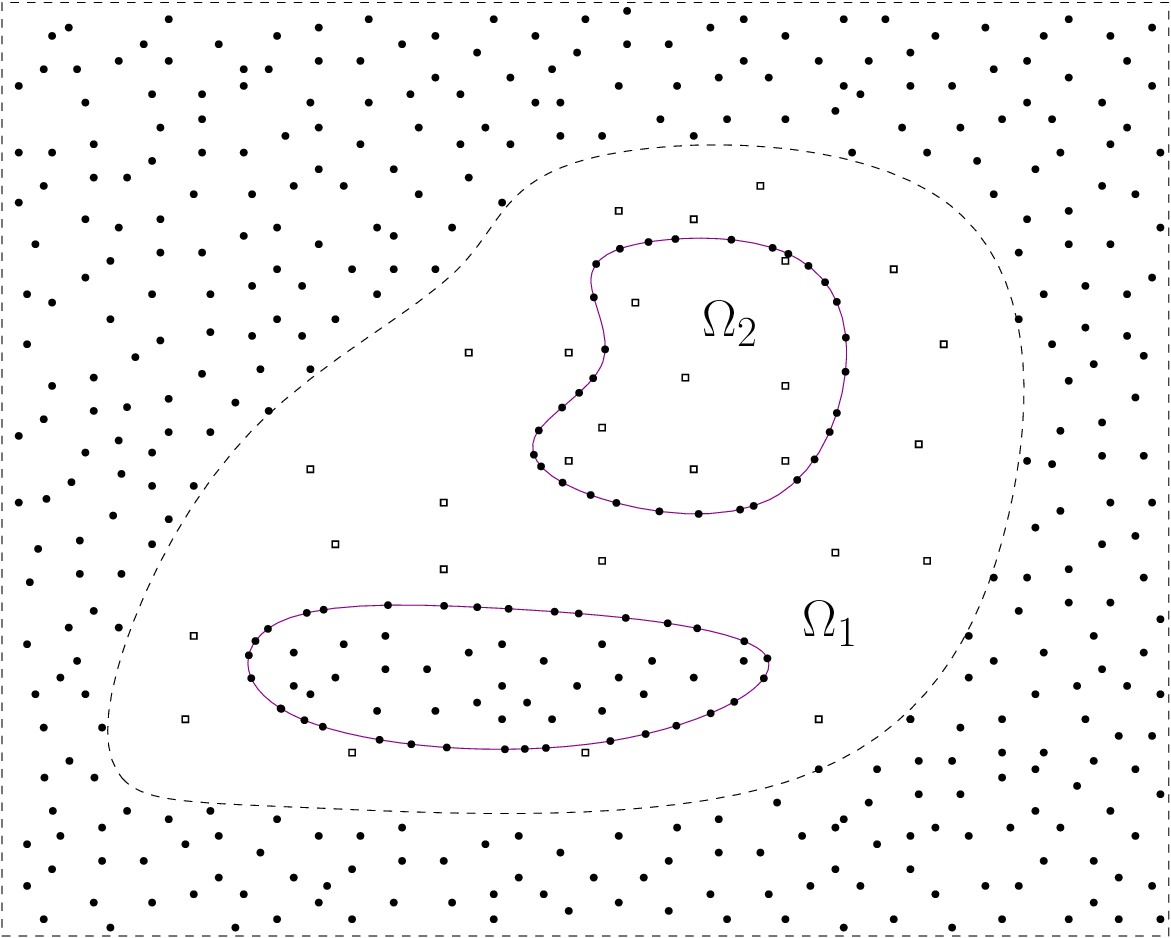}
\caption{Illustration of the the weakly confined jellium. 
The outliers are marked by white squares.}
\label{fig:points}
\end{figure}

\begin{defn}[Admissible probability measure]
\label{def:admissibility}
A probability measure $\mu$ is said to be \emph{admissible} 
if its support equals $\overline D\cup\Gamma$ 
for some $D$ and $\Gamma$ as above,
and
\[
\diff \mu=\densityBulk\,
\diff \m_D + \densityCurve  \,\diff \sigma_{\partial D\cup\Gamma},
\]
where $\densityBulk \in C^\infty(\overline{D})$
and $\densityCurve \in C^\omega(\partial D\cup\Gamma)$, 
$\densityBulk$ is real-analytic 
and strictly positive in a neighborhood of 
$\partial D$, and where $\densityCurve$ 
is either strictly positive or vanish 
identically on each component of
$\partial\Omega\cup\Gamma$.
\end{defn}

Notice that, since $\partial D\cup\Gamma$ 
is a finite union of closed curves in the plane,
either $\mathrm{supp}(\mu)$ or its complement
is bounded. By taking $D$ to be the empty set, we allow
the measure to be supported solely on a system of disjoint
Jordan curves.

We begin with a simple version of our main result, where
$\Omega$ is assumed to be a simply 
connected domain in the Riemann sphere
$\widehat{\C}=\C \cup \{\infty\}$.
For the formulation, 
we recall that the Bergman kernel of a domain $\Omega$
is the reproducing kernel
for the (closed) subspace of $L^2(\Omega,\diffA)$ consisting of
holomorphic functions.

\begin{thm}[Outliers in a simply connected region]
\label{thm:main-simple}
Suppose $\mu$ is an admissible probability
measure and
let $\Omega$ 
be a simply connected component of
$\widehat{\mathbb C}\setminus
\supp (\mu)$.
If
$\kappa_\n=\n+1$ we have the convergence
\[
\Phi_\n \xrightarrow[]{\phantom{aa}} \BergP_\Omega
\quad\text{in law}
\]
as $\n\to\infty$, where the limiting 
process $\BergP_\Omega$ in $\Omega$ is
the determinantal point process with correlation kernel
given by the Bergman kernel 
of $\Omega$.
\end{thm}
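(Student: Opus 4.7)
The plan is to identify $\Theta_\n$ as a determinantal point process, rewrite its correlation kernel in a holomorphic gauge adapted to $\Omega$, and then establish locally uniform convergence $\widetilde K_\n \to K_\Omega$ of the gauged kernel to the Bergman kernel of $\Omega$; convergence in law of $\Phi_\n$ then follows by a standard determinantal-process argument. Since the density in \eqref{eq:law} is proportional to $\prod_{i<j}|z_i-z_j|^2\prod_j \e^{-2\kappa_\n U^\mu(z_j)}$, the process $\Theta_\n$ is determinantal with correlation kernel
\[
\hat K_\n(z,w)=\e^{-\kappa_\n(U^\mu(z)+U^\mu(w))}\sum_{k=0}^{\n-1}p_k(z)\overline{p_k(w)},
\]
where $\{p_k\}_{k<\n}$ is any orthonormal basis of the polynomial space $\mathcal{P}_\n$ of degree at most $\n-1$ inside $L^2(\C,\e^{-2\kappa_\n U^\mu}\diff m_\C)$; the condition $\kappa_\n=\n+1$ is precisely what makes this inner product finite, using $U^\mu(z)=\log|z|+O(1/z)$ at infinity. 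Because $U^\mu$ is harmonic on $\Omega$ and $\kappa_\n\in\Z$, there is a single-valued, non-vanishing holomorphic function $\psi_\n\in\calO(\Omega)$ with $|\psi_\n|=\e^{-\kappa_\n U^\mu}$, namely $\psi_\n=\e^{-\kappa_\n F}$ with $F$ holomorphic when $\Omega$ is bounded, and $\psi_\n(z)=z^{-\kappa_\n}\e^{-\kappa_\n H(z)}$ with $H$ holomorphic when $\infty\in\Omega$. The kernel
\[
\widetilde K_\n(z,w):=\psi_\n(z)\overline{\psi_\n(w)}\sum_{k=0}^{\n-1}p_k(z)\overline{p_k(w)}
\]
differs from $\hat K_\n$ on $\Omega\times\Omega$ by a unimodular gauge and so defines the same determinantal process on $\Omega$; moreover it is the reproducing kernel of the finite-dimensional Hilbert space $\calM_\n:=\psi_\n\,\mathcal{P}_\n\subset\calO(\Omega)$ equipped with the norm $\|\psi_\n P\|_{\calM_\n}^2=\int_\C|P|^2\e^{-2\kappa_\n U^\mu}\diff m_\C$.

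The upper bound $\widetilde K_\n(z,z)\le K_\Omega(z,z)$ is then immediate from the variational principle. For any $f=\psi_\n P\in\calM_\n$, restricting the norm integral to $\Omega$ only decreases it:
\[
\|f\|_{\calM_\n}^2\;\ge\;\int_\Omega|f|^2\diff m_\C=\|f\|_{A^2(\Omega)}^2,
\]
so $f/\|f\|_{\calM_\n}$ has $A^2(\Omega)$-norm at most one and hence $|f(z)|^2/\|f\|_{\calM_\n}^2\le K_\Omega(z,z)$, uniformly in $\n$.

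The matching lower bound is the heart of the argument. Given $f\in A^2(\Omega)$, I would construct $P_\n\in\mathcal{P}_\n$ with $\psi_\n P_\n\to f$ locally uniformly on $\Omega$ and $\limsup_\n \|\psi_\n P_\n\|_{\calM_\n}^2\le\|f\|_{A^2(\Omega)}^2$, via a cut-and-correct $\dbar$-scheme. Fix a smooth cutoff $\chi$ equal to $1$ on a compact $K\subset\Omega$ and compactly supported in $\Omega$; then $h_\n:=\chi f/\psi_\n$ is smooth on $\C$ (extended by $0$), and $\dbar h_\n=f\,\dbar\chi/\psi_\n$ is compactly supported in $\Omega\cap\{0<\chi<1\}$. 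Solve $\dbar u_\n=\dbar h_\n$ via H\"ormander's weighted $L^2$ estimate with a strictly subharmonic regularization of $2\kappa_\n U^\mu$ (admissibility of $\mu$ and real-analyticity of $U^\mu$ near $\partial\Omega$ are used to manufacture curvature of order $\kappa_\n$ on a neighbourhood of $\supp\dbar\chi$), obtaining a correction $u_\n$ with $\int_\C|u_\n|^2\e^{-2\kappa_\n U^\mu}\diff m_\C$ vanishing as $\n\to\infty$. Then $P_\n:=h_\n-u_\n$ is entire, and integrability against the weight $\e^{-2\kappa_\n U^\mu}$, which decays like $|z|^{-2\kappa_\n}$ as $|z|\to\infty$, forces $\deg P_\n\le\n-1$. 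Since $\psi_\n P_\n=\chi f-\psi_\n u_\n$, interior regularity applied to the holomorphic function $\psi_\n P_\n-f$ on the interior of $K$ converts the $L^2$-smallness of $u_\n$ into locally uniform convergence $\psi_\n P_\n\to f$ on $K$, and taking the supremum over $f$ of unit $A^2(\Omega)$-norm yields $\liminf_\n \widetilde K_\n(z,z)\ge K_\Omega(z,z)$.

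With the diagonal convergence established, the off-diagonal statement $\widetilde K_\n(z,w)\to K_\Omega(z,w)$ on $\Omega\times\Omega$ follows by polarization and Montel's theorem, since $\widetilde K_\n$ is holomorphic in $z$, antiholomorphic in $w$, and locally uniformly bounded by Cauchy--Schwarz combined with the upper bound. Convergence in law $\Phi_\n\to \BergP_\Omega$ in the vague topology on counting measures on $\Omega$ is then the standard consequence for determinantal processes of locally uniform kernel convergence, given the local trace bound $\int_L\widetilde K_\n(z,z)\diff m_\C\le m_\C(L)\sup_L K_\Omega(z,z)<\infty$ for every compact $L\subset\Omega$. \textbf{The main obstacle} is the $\dbar$-step: the weight $2\kappa_\n U^\mu$ is merely harmonic on $\Omega$, so the positive curvature required by H\"ormander's theorem must be engineered via a delicate perturbation supported near $\partial\Omega$ that simultaneously preserves the polynomial growth at infinity (so that $P_\n$ remains a polynomial of degree $<\n$) and is small enough not to distort the leading $L^2$ asymptotics.
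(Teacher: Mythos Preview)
Your overall architecture matches the paper's: gauge the correlation kernel to a holomorphic one, get the upper bound $\widetilde K_\n(z,z)\le K_\Omega(z,z)$ by monotonicity, get a matching lower bound by building near-extremal polynomials via $\bar\partial$-surgery, and finish with Montel plus the polarization identity. The gap is exactly where you flag it, but your proposed resolution does not work. With the cut-off $\chi$ compactly supported in $\Omega$, the data $\bar\partial h_\n$ lives on a set where $U^\mu$ is \emph{harmonic}. Any H\"ormander weight $\phi$ must satisfy $\phi\le 2\kappa_\n U^\mu$ on $\C$ (so that the $L^2(\e^{-\phi})$ bound controls the $L^2(\e^{-2\kappa_\n U^\mu})$ norm of $u_\n$) and $\Delta\phi>0$ on $\supp\bar\partial\chi$. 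By the maximum principle for the subharmonic function $\phi-2\kappa_\n U^\mu$ on $\Omega$, equality on an interior set is impossible, so necessarily $\phi<2\kappa_\n U^\mu$ on $\supp\bar\partial\chi$. Then the right-hand side of H\"ormander's estimate contains the factor $\e^{2\kappa_\n U^\mu-\phi}$, which is at best bounded away from zero and typically grows in $\n$; the curvature factor $1/\Delta\phi$ cannot compensate without forcing $\phi$ far from $2\kappa_\n U^\mu$ and destroying the norm comparison. Your parenthetical about ``real-analyticity near $\partial\Omega$'' is misplaced: $\supp\bar\partial\chi$ sits at positive distance from $\partial\Omega$, so regularity there is irrelevant.

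The paper's fix is to move the cut-off \emph{outside} $\overline{\Omega}$. This requires two ingredients you are missing. First, $U^\mu|_\Omega$ extends harmonically to a function $\mathcal U^\mu$ on a larger domain $\Omega'\supset\overline\Omega$ (this is where admissibility and real-analyticity are genuinely used), and on $\Omega'\setminus\overline\Omega$ one has $U^\mu-\mathcal U^\mu>0$. Second, instead of a general $f\in A^2(\Omega)$, one works with basis elements $\psi_k$ that extend holomorphically to $\Omega'$ (density of rational functions with poles in the holes gives such a basis). Then the quasipolynomial is $F_{k,\n}=\chi\,\psi_k\,\e^{\kappa_\n\mathfrak h}$ with $\chi\equiv 1$ near $\overline\Omega$ and $\supp\bar\partial\chi\subset\Omega'\setminus\overline\Omega$; on that support $|F_{k,\n}|^2\e^{-2\kappa_\n U^\mu}=|\psi_k|^2\e^{-2\kappa_\n(U^\mu-\mathcal U^\mu)}$ decays exponentially in $\n$, which is what drives the H\"ormander right-hand side to zero. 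The strictly subharmonic weight $V$ for H\"ormander is then built between $\mathcal U^\mu$ and $U^\mu$ (so $V\le U^\mu$ globally, $V=U^\mu$ on $\Omega$, and $\Delta V\ge\delta>0$ on $\supp\bar\partial\chi$), which is possible precisely because $\supp\bar\partial\chi$ lies in the charged region where $\mu$ provides curvature.
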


The convergence $\Phi_\n\to\BergP_\Omega$ amounts to locally uniform
convergence of the correlation kernels for the respective processes.
The kernel and the limiting process, called
the \emph{Bergman point process},
are described in Section~\ref{s:Bergman}. 
In the case where $\Omega$ is the unit disk, the Bergman point process 
coincides with the zeros of the hyperbolic GAF studied by Peres and Vir\'{a}g 
\cite{PeresVirag}. 
For a general simply connected domain $\Omega$, if $\phi$ is a conformal 
map from the unit disk $\mathbb{D}$ to $\Omega$, then, in law, 
$\BergP_{\Omega}= \phi(\BergP_{\mathbb D})$, which is an immediate 
consequence of the conformal equivariance of Bergman kernels.

\begin{rem}
The choice $\kappa_\n = \n+1$ is a natural from
a geometric point of view. The reason is that, with this choice, 
the Coulomb gas can naturally be viewed as a random process
on the sphere with respect to a continuous potential,
while any other choice of $\kappa_\n$ will add an extraneous charge at infinity.
\end{rem}

Theorem~\ref{thm:main-simple} is a particular case of the next theorem.
If $\Omega$ is multiply-connected, 
the outlier processes do not converge in general.
Instead, there is a whole family of possible limiting processes.
We identify these and characterize subsequential convergence.
\begin{thm}[Outliers in general regions]
\label{thm:NonSimplyConnected}
Suppose $\mu$ is an admissible probability
measure and
let $\Omega$ 
be a connected component of
$\mathbb C \setminus
\supp(\mu)$.
Let us fix $z_1,\dots,z_l$ in 
the interior of each of the holes of $\Omega$, i.e.,
in the bounded connected components
of $\mathbb C \setminus \overline{\Omega}$, and
let $q_1,\dots,q_l$ be the $\mu$-mass of each 
(closed) hole.
If, along some subsequence,
\begin{equation}\label{eq:conv-masses}
(e^{2\pi i \kappa_\n q_1},\dots,
e^{2\pi i \kappa_\n q_l})
\xrightarrow[]{\phantom{aaa}}
(e^{2\pi i Q_1}, \dots, e^{2\pi i Q_l})
\end{equation}
for some
$\mathbf Q = (Q_1,\dots,Q_l) \in \mathbb (\R/\Z)^l$,
then, along the same subsequence
\[
\Phi_\n \xrightarrow[]{\phantom{aa}}
\BergP_{\Omega,\mathbf Q} 
\mbox{ in law},
\]
where $\BergP_{{\mathbf Q},\Omega}$ is the weighted Bergman
point process on $\Omega$ associated with the weight
\begin{equation}\label{eq:weights}
\omega_{{\bf Q}}(z)=\prod_{i=1}^l |z-z_i|^{-2Q_i}.
\end{equation}
\end{thm}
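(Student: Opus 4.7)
The plan is to write down the correlation kernel of the determinantal process $\Theta_\n$, perform a gauge transformation on $\Omega$ that absorbs the monodromy of $U^\mu$ around the holes, and reduce the problem to a Bergman-kernel asymptotic for weighted polynomial spaces. Concretely, the Vandermonde structure of the density \eqref{eq:law} makes $\Theta_\n$ a determinantal point process on $\C$ with correlation kernel
\[
K_\n(z,w)=R_\n(z,w)\,e^{-\kappa_\n U^\mu(z)-\kappa_\n U^\mu(w)},
\]
where $R_\n$ is the reproducing kernel of the space $\mathcal{P}_\n$ of polynomials of degree less than $\n$ in $L^2(\C,e^{-2\kappa_\n U^\mu}\,\diff\m)$; the condition $\kappa_\n>\n$ secures integrability at infinity. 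Hence $\Phi_\n=\Theta_\n\cap\Omega$ is determinantal with kernel $K_\n|_{\Omega\times\Omega}$, and the theorem reduces to locally uniform convergence of the correlation functions $\rho_{k,\n}(z_1,\dots,z_k)=\det(K_\n(z_i,z_j))$ on $\Omega^k$ to those of $\BergP_{\Omega,\mathbf{Q}}$; the intensity bound of Section~\ref{s:bound-number} together with the standard DPP convergence criterion recalled in Appendix~\ref{app:David} will then upgrade this to convergence in law.

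Since $\supp\mu\cap\Omega=\emptyset$, the potential $U^\mu$ is harmonic on $\Omega$, and its multi-valued holomorphic completion $\Phi(z)=\int\log(z-w)\,\diff\mu(w)$ has monodromy $2\pi i q_j$ around the $j$-th hole. Peeling off this monodromy gives
\[
U^\mu(z)=\Re\tilde\Phi(z)+\sum_{j=1}^{l}q_j\log|z-z_j|
\]
for a single-valued holomorphic $\tilde\Phi$ on $\Omega$. Along the subsequence in \eqref{eq:conv-masses} choose integers $n_\n^{(j)}$ with $\kappa_\n q_j-n_\n^{(j)}\to Q_j$, and set
\[
h_\n(z)=e^{-\kappa_\n\tilde\Phi(z)}\prod_{j=1}^{l}(z-z_j)^{-n_\n^{(j)}},\qquad G_\n(z,w)=h_\n(z)\,\overline{h_\n(w)}\,R_\n(z,w),
\]
so that $|h_\n(z)|^{-2}e^{-2\kappa_\n U^\mu(z)}=\prod_j|z-z_j|^{-2(\kappa_\n q_j-n_\n^{(j)})}\to\omega_{\mathbf{Q}}(z)$ locally uniformly on $\Omega$. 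Factoring $h_\n$ out of rows and columns in $\det(K_\n(z_i,z_j))$ then yields
\[
\rho_{k,\n}(z_1,\dots,z_k)=\prod_{i=1}^{k}\bigl(|h_\n(z_i)|^{-2}e^{-2\kappa_\n U^\mu(z_i)}\bigr)\,\det\bigl(G_\n(z_i,z_j)\bigr),
\]
reducing the theorem to the Bergman-kernel convergence $G_\n(z,w)\to B_{\omega_{\mathbf{Q}}}(z,w)$ locally uniformly on $\Omega\times\Omega$, where $B_{\omega_{\mathbf{Q}}}$ denotes the reproducing kernel of $A^2(\Omega,\omega_{\mathbf{Q}}\diff\m)$.

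This last convergence is the main obstacle. Interpret $G_\n$ as the reproducing kernel of the finite-dimensional space $\mathcal{M}_\n=h_\n\cdot\mathcal{P}_\n$ equipped with the norm inherited from $\mathcal{P}_\n$; its restriction to $\Omega$ is precisely $\int_\Omega|f|^2 w_\n\,\diff\m$ with $w_\n\to\omega_{\mathbf{Q}}$. The upper bound $\limsup_\n G_\n(z_0,z_0)\le B_{\omega_{\mathbf{Q}}}(z_0,z_0)$ then follows from the extremal characterization, since the $\Omega$-norm is dominated by the global $\C$-norm. The matching lower bound is the hard direction and is where the admissibility hypotheses on $\mu$ really bite: given the extremal $f\in A^2(\Omega,\omega_{\mathbf{Q}})$ at $z_0$, one must construct $p_\n\in\mathcal{P}_\n$ so that $h_\n p_\n$ approximates $f$ in the Bergman norm on $\Omega$ while keeping $\int_{\C\setminus\Omega}|p_\n|^2 e^{-2\kappa_\n U^\mu}\,\diff\m$ negligible. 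The natural route is a Hörmander-type $\dbar$-construction against the subharmonic weight $\kappa_\n U^\mu$: smoothly extend $f/h_\n$ across $\partial\Omega$, truncate inside $\supp\mu$, and correct by a $\dbar$-solution whose $L^2(e^{-2\kappa_\n U^\mu})$-mass is controlled by the strict subharmonicity and real-analytic regularity of $U^\mu$ near $\partial\Omega$ supplied by admissibility. Theorem~\ref{thm:main-simple} is recovered as a special case, corresponding to $\omega_{\mathbf{Q}}\equiv 1$.
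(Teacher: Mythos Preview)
Your overall strategy matches the paper's: represent $\Phi_\n$ determinantally, gauge away the multi-valued part of $U^\mu$ on $\Omega$ (the paper writes your factor $h_\n$ as $e^{-\kappa_\n\mathfrak h(z)}\prod_j(z-z_j)^{-(\kappa_\n q_j-[\kappa_\n q_j])}$), obtain the diagonal upper bound from the trivial inequality $\|\cdot\|_{\n,\Omega}\le\|\cdot\|_\n$, and manufacture the lower bound by extending holomorphically across $\partial\Omega$, cutting off, and correcting with a H\"ormander $\bar\partial$-solution. Two side remarks: the intensity bound of Section~\ref{s:bound-number} plays no role in the convergence argument---locally uniform kernel convergence alone suffices via Proposition~\ref{prop:UCVofKernels}; and you argue only diagonal convergence of $G_\n$, whereas the paper passes to the off-diagonal by a Montel normal-families step (bounded by the diagonal via Cauchy--Schwarz) together with Lemma~\ref{lem:Herimitan}.

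The substantive gap is in the H\"ormander step. You propose to run the estimate directly with weight $2\kappa_\n U^\mu$ and invoke ``strict subharmonicity of $U^\mu$ near $\partial\Omega$'', but for a general admissible $\mu$ this is false. If a boundary component of $\Omega$ carries only curve charge (it lies in $\Gamma$, or in $\partial D$ with $\omega=0$ and $\rho>0$ there), then $\Delta U^\mu$ has no absolutely continuous part in a full neighbourhood of that component even on the $\supp\mu$ side, so the factor $(\Delta\phi)^{-1}$ in the H\"ormander bound gives no control on $\supp\bar\partial\chi$. Moreover, whenever $\rho\not\equiv 0$ the potential $U^\mu$ is not $C^{1,1}$ across $\partial D\cup\Gamma$, and the estimate of Lemma~\ref{lem:horm} does not even apply. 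The paper repairs both issues by replacing $U^\mu$ with an auxiliary $C^2$ subharmonic potential $V$ (Lemma~\ref{lem:changeofV}) satisfying $V=U^\mu$ on $\Omega$, $\mathcal{U}^\mu\le V\le U^\mu$ on the harmonic-extension domain $\Omega'$, and $\Delta V\ge\delta>0$ on an annular shell containing $\supp\nabla\chi$; the inequality $V\ge\mathcal{U}^\mu$ then makes the right-hand side exponentially small in $\n$, while $V\le U^\mu$ transfers the resulting bound back to the norm $\|\cdot\|_\n$. Without this smoothed potential the argument does not close for the full class of admissible measures.
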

The weighted Bergman kernel and its associated point 
process will be described in Section~\ref{s:Bergman}.
Theorem~\ref{thm:main-simple}
is a particular case of this result with $l=1$ and $\kappa_\n=\n+1$. 
Notice that by the uniqueness of the limit, the limiting
point process does not depend on the particular 
choice of the points $(z_i)_{i=1}^l$
(for a direct argument, see
Proposition~\ref{prop:Freedom}
and Proposition~\ref{thm: bergman continuity}).
Notice also that if ${\bf Q}\ne {\bf Q}'$ 
as elements of the torus $(\R/\Z)^l$, 
then the limiting processes are distinct 
(see Proposition~\ref{thm: bergman continuity}).

When there are several uncharged components, especially if these have common
boundary points, it is natural to ask if the limiting processes interact
with each other.
The following theorem answers this question.

\begin{thm}
[Independence between components]
\label{thm:indep}
Suppose $\mu$ is an admissible probability
measure.
Let $\Omega_1$ and $\Omega_2$ be any two connected
components of $\C\setminus 
\supp(\mu)$ and denote 
by $\Phi_{1,\n}$ and
$\Phi_{2,\n}$ the associated outlier processes. 
Along a subsequence for which \eqref{eq:conv-masses}
holds for both components, with some ${\bf Q}_1$
and ${\bf Q}_2$, 
we have that
\[
(\Phi_{1,\n}, \Phi_{2,\n})\xrightarrow[]{\phantom{law}}
(\BergP_{{\bf Q}_1,\Omega_1}, \BergP_{{\bf Q}_2,\Omega_2})
\quad\text{ in law},
\]
where 
$\BergP_{{\bf Q}_1,\Omega_1}$ and $\BergP_{{\bf Q}_2,\Omega_2}$
are independent.
\end{thm}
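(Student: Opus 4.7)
Since the Boltzmann--Gibbs measure \eqref{eq:law} is that of a determinantal point process on $\C$ with scalar correlation kernel
\[
K_\n(z,w)=\e^{-\kappa_\n(U^\mu(z)+U^\mu(w))}\sum_{k=0}^{\n-1}\phi_k(z)\overline{\phi_k(w)},
\]
where $\{\phi_k\}_{k=0}^{\n-1}$ is any orthonormal basis of polynomials of degree $<\n$ for the weight $\e^{-2\kappa_\n U^\mu}\diff\m$, the pair $(\Phi_{1,\n},\Phi_{2,\n})$ is itself a determinantal process on $\Omega_1\sqcup\Omega_2$ with kernel equal to the restriction of $K_\n$. A block-diagonal correlation kernel corresponds exactly to a pair of independent determinantal processes, and the marginal Bergman limits are already supplied by Theorem~\ref{thm:NonSimplyConnected}. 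My strategy is therefore to reduce the whole statement to the off-diagonal decay
\[
K_\n(z,w)\xrightarrow[\n\to\infty]{}0,\qquad z\in\Omega_1,\ w\in\Omega_2,
\]
locally uniformly. From this decay, each joint intensity factorises in the limit into a product of $\Omega_1$- and $\Omega_2$-determinants, and the Cauchy--Schwarz bound $|K_\n(z,w)|^2\le K_\n(z,z)K_\n(w,w)$ combined with the marginal asymptotics of Theorem~\ref{thm:NonSimplyConnected} provides enough uniform integrability to upgrade intensity convergence to convergence in law.

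\medskip

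The plan for the cross-kernel decay is to re-run the asymptotic construction underlying Theorem~\ref{thm:NonSimplyConnected}, but with $z$ and $w$ placed in different components. Admissibility of $\mu$ yields a strictly subharmonic weight $2\kappa_\n U^\mu$ that is real-analytic near $\partial D\cup\Gamma$, so one can build, via Hörmander-type $\bar\partial$-solutions, approximate holomorphic cross-sections matching $K_\n(\cdot,w)$ on $\Omega_1$ with a weighted Bergman reproducing object associated to the twist $\mathbf Q_1$, and analogously in $w$ on $\Omega_2$. Carrying out both constructions simultaneously represents $K_\n(z,w)$, up to a lower-order $\bar\partial$-correction, as a pairing between a twisted-holomorphic section on $\Omega_1$ and an antiholomorphic section on $\Omega_2$. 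These objects live in incompatible twisted bundles over the different components, and a steepest-descent analysis for the reproducing formula
\[
K_\n(z,w)=\int K_\n(z,\zeta)\overline{K_\n(w,\zeta)}\,\e^{-2\kappa_\n U^\mu(\zeta)}\diff\m(\zeta)
\]
along boundary arcs of $\supp(\mu)$ lying strictly between $\Omega_1$ and $\Omega_2$ produces the desired exponential smallness of the cross-kernel away from any shared boundary points.

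\medskip

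The main obstacle, and the part where the screening of the gas is genuinely needed, is the case $\overline{\Omega_1}\cap\overline{\Omega_2}\neq\emptyset$ highlighted in the statement: the separating boundary arcs degenerate to finitely many points and the steepest-descent gain vanishes there. I would treat this by performing the $\bar\partial$-correction against a smooth cut-off that assigns a whole neighbourhood of each shared boundary point to a single component, so that the associated $\bar\partial$-source is supported away from the shared points and from the relevant uncharged component. Hörmander's $L^2$-estimate, applied with a weight that is strictly plurisubharmonic off $\supp(\mu)$, then yields $L^2_{\mathrm{loc}}$-smallness of the cross-kernel uniformly up to the shared boundary points; a Bergman-type mean value bound converts this into the locally uniform pointwise decay required in the first paragraph. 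Combining this local estimate with the generic bound from the previous paragraph establishes the cross-kernel decay and, through the reduction of the first paragraph, completes the proof of Theorem~\ref{thm:indep}.
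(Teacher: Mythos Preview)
Your reduction in the first paragraph is correct and matches the paper: the joint process is determinantal on $\Omega_1\cup\Omega_2$ with the restricted kernel, a block-diagonal limit kernel encodes independence, and the diagonal blocks are handled by Theorem~\ref{thm:NonSimplyConnected}. The entire content of Theorem~\ref{thm:indep} is indeed the off-diagonal decay $K_\n(z,w)\to 0$ for $z\in\Omega_1$, $w\in\Omega_2$.

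Where you diverge from the paper, and where your plan runs into trouble, is in how you propose to prove this decay. The paper's argument is completely soft and uses nothing beyond what Theorem~\ref{thm:NonSimplyConnected} already gives. Fix $z\in\Omega_1$. The reproducing property reads
\[
\kerK_\n(z,z)=\int_{\C}|\kerK_\n(z,w)|^2\,\diff\m(w)\ge\int_S|\kerK_\n(z,w)|^2\,\diff\m(w)+\int_{\C\setminus\Omega_1}|\kerK_\n(z,w)|^2\,\diff\m(w)
\]
for any compact $S\subset\Omega_1$. Let $\n\to\infty$ (along the subsequence): the left side converges to $\BergK_{\Omega_1,\mathbf Q_1}(z,z)$, and the first term on the right converges to $\int_S|\BergK_{\Omega_1,\mathbf Q_1}(z,w)|^2\,\diff\m(w)$ by locally uniform convergence on $\Omega_1$. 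Now let $S\nearrow\Omega_1$; monotone convergence and the reproducing property of the \emph{limit} kernel $\BergK_{\Omega_1,\mathbf Q_1}$ give $\int_{\Omega_1}|\BergK_{\Omega_1,\mathbf Q_1}(z,w)|^2\,\diff\m(w)=\BergK_{\Omega_1,\mathbf Q_1}(z,z)$, forcing
\[
\limsup_{\n\to\infty}\int_{\C\setminus\Omega_1}|\kerK_\n(z,w)|^2\,\diff\m(w)=0.
\]
Combined with normal families (via Cauchy--Schwarz on the diagonal, exactly as you note), this yields locally uniform vanishing on $\Omega_1\times\Omega_2$ with no further work, and in particular with no special treatment of shared boundary points.

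By contrast, your proposed route via $\bar\partial$-surgery and steepest descent is both unnecessary and problematic as stated. The claim that one can apply H\"ormander's estimate with a weight ``strictly plurisubharmonic off $\supp(\mu)$'' is false: $U^\mu$ is harmonic on $\C\setminus\supp(\mu)$, so $\Delta(2\kappa_\n U^\mu)=0$ there. In the paper's actual use of H\"ormander (Lemma~\ref{lem:horm}, Proposition~\ref{prop:hormander}), the $\bar\partial$-source $\bar\partial\chi$ is supported strictly inside $\supp(\mu)$ where an auxiliary potential $V$ is strictly subharmonic; your cut-off near shared boundary points would place the source in the uncharged region, where no such positivity is available. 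The ``steepest-descent analysis along boundary arcs'' is also left entirely heuristic. None of this is needed: the soft argument above bypasses all of it.
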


\subsection{Outliers for random zeros}
The Bergman universality of the outliers processes
goes beyond the weakly confined Coulomb gas model.
In support of this claim, we will show that the zero
set for one  model of random polynomials exhibits the same feature. 
We focus on this particular model for simplicity 
but the result should hold
in larger generality, for instance, 
for Weyl-type Gaussian polynomials
defined with respect to smooth exponentially varying weights.
In addition, the proof of 
Theorem~\ref{thm:random-pol} below
should be possible to adapt to the setting of
a finitely connected uncharged
region.
This is related to the paper by Katori and Shirai \cite{KatoriShirai},
which deals with zeros of random
Laurent series with i.i.d. coefficients.

The model we consider
belongs to a family of random polynomials
introduced by Zeitouni and Zelditch \cite{ZeitouniZelditch}.
Denote by $\nu$ a compactly supported probability measure on $\C$, and 
fix an orthonormal basis $(Q_{k,\n})_{k=0}^{\n}$ for the space
$\C_{\n}[z]$ of polynomials 
of degree at most $\n$, endowed with the inner product
of $L^2(\C,\e^{-2\n U^\nu}\diff\nu)$.
The random polynomial of degree $\n$ associated with $\nu$ is the 
linear combination of basis elements
\begin{equation}\label{eq:rand-pol}
P_\n=\sum_{k=0}^{\n}\xi_k Q_{k,\n},
\end{equation}
where $(\xi_k)_k$ is a sequence of i.i.d.\ standard complex
Gaussians, $\xi_k\sim\mathcal{N}_\C(0,1)$.

The law of $P_\n$ does not depend
on the specific orthonormal basis
$(Q_{k,\n})_{k=0}^\n$ we choose.
The zero set $\Psi_\n$ of $P_\n$ forms
a point process in the plane, which shares many features
with a weakly confined Coulomb gas whose background measure is $\nu$.
This was noticed in \cite{ZeitouniZelditch} and 
is discussed more thoroughly in \cite{BG}.
In particular, as $\n$ tends to infinity, the points of
$\Psi_\n$ distribute according to the measure 
$\nu$ and there is a large number of outliers.

We consider the class of measures
\[
\diff\nu=\densityCurve\,\diff\sigma_\Gamma
\]
where $\densityCurve$ is a strictly positive and real-analytic 
density 
with respect to the arc length measure $\sigma_\Gamma$ on
 a simple closed analytic curve $\Gamma$.
Let $\Omega_1$ and $\Omega_2$ denote the components of
$\hat \C\setminus \Gamma$, and $\Xi_{j,\n}=\Psi_{\n}\cap\Omega_j$ the point process of 
outliers in $\Omega_j$.

\begin{thm}\label{thm:random-pol}
As $\n\to\infty$, we have the convergence
\[
(\Xi_{1,\n}, \Xi_{2,\n}) \xrightarrow[]{\phantom{law}}
(\BergP_{\Omega_1}, \BergP_{\Omega_2}) \quad\text{ in law},
\]
where the Bergman point processes 
$\BergP_{\Omega_1}$ and
$\BergP_{\Omega_{2}}$ are independent.
\end{thm}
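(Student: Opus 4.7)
The plan is to reduce the joint convergence of zero processes to the joint convergence of the underlying Gaussian analytic functions, appropriately normalized on each side of $\Gamma$. The random polynomial $P_\n$ is itself a Gaussian analytic function whose covariance is the reproducing kernel
\[
K_\n(z,w)=\sum_{k=0}^\n Q_{k,\n}(z)\overline{Q_{k,\n}(w)}
\]
of $\C_\n[z]\subset L^2(\C,e^{-2\n U^\nu}\diff\nu)$. A standard principle for GAFs (Hough--Krishnapur--Peres--Vir\'ag) says that locally uniform convergence of covariance kernels of Gaussian analytic functions to a non-degenerate limit implies convergence in distribution of the zero sets. Combined with the Peres--Vir\'ag identity --- the zero set of the hyperbolic GAF $\sum_{k\ge 0}\xi_k z^k$ on $\D$ is determinantal with correlation kernel equal to the Bergman kernel of $\D$ --- and the conformal invariance of both the GAF zero process and of the Bergman point process, it suffices to produce two \emph{independent} limiting GAFs on $\Omega_1$ and $\Omega_2$ whose covariances are conformally equivalent to $1/(1-z\bar w)$.

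To carry this out I would invoke strong Szego-type asymptotics for the orthonormal polynomials $Q_{k,\n}$. Since $\Gamma$ is an analytic Jordan curve, $\rho$ is strictly positive and real-analytic, and the varying weight $e^{-2\n U^\nu}$ is real-analytic in a neighbourhood of $\Gamma$, the Suetin/Riemann--Hilbert machinery provides asymptotics of $Q_{k,\n}$ with geometrically small error terms, uniform in $k\in\{0,\dots,\n\}$. Let $\phi\colon\Omega_2\to\D_\e$ and $\phi_{\mathrm{in}}\colon\Omega_1\to\D$ denote the exterior and interior conformal maps (after a choice of interior base point). Schematically the asymptotics take the form
\[
Q_{k,\n}(z)\sim \calS(z)\,\phi_{\mathrm{in}}(z)^k\text{ on }\Omega_1,\qquad Q_{k,\n}(z)\sim \calS(z)\,\phi(z)^k\text{ on }\Omega_2,
\]
with bounded Szego-type prefactors $\calS(z)$. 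With the normalizations $\tilde P_\n^{(1)}(z)=P_\n(z)$ and $\tilde P_\n^{(2)}(z)=\phi(z)^{-\n}P_\n(z)$, summation over $k$ shows that the two diagonal covariances $K_\n(z,w)$ and $\phi(z)^{-\n}\overline{\phi(w)}^{-\n}K_\n(z,w)$ converge locally uniformly on $\Omega_1\times\Omega_1$ and $\Omega_2\times\Omega_2$ to kernels whose associated GAFs have, via the disk calculation and Peres--Vir\'ag, zero processes $\BergP_{\Omega_1}$ and $\BergP_{\Omega_2}$.

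The main obstacle is the independence, i.e.\ the vanishing of the rescaled cross-covariance
\[
\overline{\phi(w)}^{-\n}\,K_\n(z,w),\qquad z\in\Omega_1,\ w\in\Omega_2,
\]
locally uniformly. Substituting the Szego asymptotics on the two sides, the $k$th summand is of order $|\phi_{\mathrm{in}}(z)|^k\cdot|\phi(w)|^{k-\n}$. The factor $|\phi_{\mathrm{in}}(z)|<1$ damps large $k$, while $|\phi(w)|>1$ forces $|\phi(w)|^{k-\n}$ to be small when $k$ is much less than $\n$; both regimes contribute geometrically, so the full sum is exponentially small in $\n$. Combined with the convergence of the two diagonal blocks this yields joint convergence of $(\tilde P_\n^{(1)},\tilde P_\n^{(2)})$ to a pair of independent limiting GAFs, whence Theorem~\ref{thm:random-pol} follows via the GAF principle invoked above. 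The delicate technical point is to ensure that the Szego asymptotics are sharp enough that the geometric decay survives summation over \emph{all} $k\in\{0,\dots,\n\}$, in particular for $k/\n$ close to $0$ or $1$ where the standard asymptotics often require separate treatment.
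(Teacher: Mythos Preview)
Your approach is genuinely different from the paper's, and the gap you yourself flag at the end is real.

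The paper does \emph{not} analyze the standard orthonormal polynomials $Q_{k,\n}$ at all. Instead it fixes an orthonormal basis $(\psi_k)$ of the Hardy space $H^2(\Omega,\nu)$ consisting of functions holomorphic across $\Gamma$, builds quasi\-polynomials $F_{k,\n}=\chi\,\psi_k\,e^{\n\mathfrak h}$ (with $\mathfrak h$ holomorphic on $\Omega$ and $\Re\mathfrak h=U^\nu$ on $\Gamma$), and corrects them to true polynomials $R_{k,\n}$ via H\"ormander's $\bar\partial$-estimate. Gram--Schmidt on $(R_{k,\n})_{k\le k_0}$ produces an orthonormal family $(P_{k,\n})_{k\le k_0}$ in $H^2_\n(\Omega,e^{-2\n U^\nu}\diff\nu)$ with $|P_{k,\n}(z)|^2e^{-2\n U^\nu(z)}\to|\psi_k(z)|^2$, giving the lower bound for $\kerK_\n(z,z)$. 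The matching upper bound is the soft monotonicity $\kerK_\n(z,z)\le S_{\Omega,\rho}(z,z)$, and off-diagonal convergence follows from a normal-families argument. Independence between $\Omega_1$ and $\Omega_2$ is obtained exactly as in Theorem~\ref{thm:indep}: one shows $\int_{\C\setminus\Omega_1}|\kerK_\n(z,w)|^2\diff m(w)\to 0$ by exhausting $\Omega_1$ and using only the reproducing property of the limit Szeg\H{o} kernel, with no pointwise information about any individual basis element.

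Your plan, by contrast, rests on uniform-in-$k$ strong asymptotics of the $Q_{k,\n}$ themselves. The difficulty is that the weight $e^{-2\n U^\nu}\rho$ on $\Gamma$ is genuinely $\n$-dependent: $U^\nu$ is not constant on $\Gamma$ except in degenerate cases such as the unit circle with uniform density, so the schematic formulas $Q_{k,\n}\sim\mathcal S(z)\,\phi_{\mathrm{in}}(z)^k$ and $Q_{k,\n}\sim\mathcal S(z)\,\phi(z)^k$ with an $\n$-independent prefactor $\mathcal S$ cannot be right in general; the prefactor must carry an $e^{\n\mathfrak h}$-type factor. Establishing Riemann--Hilbert/Szeg\H{o} asymptotics for varying exponential weights on an analytic Jordan curve, uniformly over $k/\n\in[0,1]$, is substantial work not available off the shelf, and both your diagonal limits and your cross-covariance estimate hinge on it. The paper's $\bar\partial$-surgery route sidesteps all of this: it never needs to know what the $Q_{k,\n}$ look like, only that some adapted orthonormal family of polynomials exists.
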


We believe that Bergman universality for outliers should hold 
also for more general measures $\nu$ which are 
not supported on a curve. However, 
we will not pursue this here.

\subsection{Description of the approach}

Our proofs are based on showing convergence of kernels: the correlation 
kernels of the determinantal point processes for the jellium, and the 
covariance kernels for the random polynomials. In the first case, 
locally uniform convergence towards the limiting Bergman kernel implies 
the convergence of the corresponding point process. In the second case, 
locally uniform convergence of the covariance kernels to the Szeg\H{o} 
kernel implies the convergence of the (analytic) Gaussian fields, 
which in turn implies the convergence of the zero processes. 
Here we discuss only the first case, and moreover assume for the 
sake of simplicity that ${\bf Q} = 0$.

For a description of the kernel $\kerK_\n(z,w)$ corresponding to 
the weakly confined jellium with $N$ particles, see 
Proposition~\ref{prop:CoulombAreDeterminantal}. However, we will work with a 
modification of it $\holkerK_\n(z,w)$ which is analytic in $(z, \bar{w})$. 
Our goal is to show the locally uniform convergence
\[
\holkerK_\n(z,w) \xrightarrow[N\to\infty]{} \BergK_\Omega(z,w),
\qquad z,w \in \Omega,
\]
where $\BergK_\Omega$ is the Bergman kernel of $\Omega$. 
In fact, the analytic nature of the kernels implies it is 
sufficient to prove convergence on the diagonal $w = z$.

An upper bound $\holkerK_\n(z,z) = \kerK_\n(z,z) \le \BergK_\Omega(z,z)$ 
on the diagonal follows from the extremal representation of 
the kernels (Lemma~\ref{lem:monotone}), and Montel's theorem 
reduces the task to proving the \emph{pointwise} lower bound
\begin{equation}\label{eq:liminf_bnd_kernels}
\liminf_{\n\to\infty} \kerK_\n(z,z) \ge \BergK_\Omega(z,z), 
\qquad z\in \Omega.
\end{equation}

We recall that
\[
\BergK_\Omega(z,z) = \sum_{k=1}^{\infty} \left| \psi_k(z)\right|^2, 
\qquad z\in\Omega,
\]
where $\left\{\psi_k\right\}_{k\ge 1}$ is (any) orthonormal 
basis for the Bergman space $A^2(\Omega)$, and similarly $\kerK_\n(z,z)$ 
can be expressed in terms of an appropriate sum of polynomials. 
In order to obtain the lower bound \eqref{eq:liminf_bnd_kernels}, 
we show that for a proper choice of $\psi_k$-s one can find, 
for every $k_0 \in \N$ fixed, an orthonormal set 
$\left\{P_{k,\n}\right\}_{k \le k_0}$ in 
$L^2_\n(\C,\e^{-2\kappa_\n U^\mu}\diff m_\C)$ 
consisting of polynomials in $\C_{\n-1}[z]$, such that,
\[
\left| P_{k,\n}(z) \right|^2 \e^{-2\kappa_\n U^\mu(z)} 
\xrightarrow[\n \to \infty]{} \left|\psi_k(z) \right|^2, 
\qquad z\in \Omega, \quad k \le k_0.
\]
Taking the limit $k_0 \to \infty$ we obtain 
\eqref{eq:liminf_bnd_kernels}. 
We emphasize these polynomials are not the 
standard orthogonal polynomials 
(ordered by degree), but a family adapted to 
the basis $\left\{\psi_k\right\}_{k\ge1}$.

The method we use to obtain the polynomials 
$P_{k,\n}$ is based on the approach developed in
by Hedenmalm and Wennman \cite{HW} (see also \cite{HW-ONP2} for a less 
technical exposition), which loosely
goes back to e.g.\ Berman's paper \cite{Berman} and the work \cite{AHM} 
of Ameur, Hedenmalm and Makarov, respectively, 
and even to H{\"o}rmander's paper 
\cite{HormActa65}. This uses a polynomial approximation
approach using H{\"o}rmander $\bar{\partial}$-estimates and
certain harmonic continuation properties
of the potential (Proposition~\ref{prop:extension1} and Lemmas
\ref{lem:horm} and
\ref{lem:changeofV}). These technical steps
are responsible for our regularity assumptions, but we believe
this is excessive.
We should stress that, due to the weak confinement, 
there are considerable simplifications in the 
construction of approximately orthonormal 
holomorphic functions when compared to \cite{HW}.
On the other hand, to allow us to consider bounded (cf.\ \cite{HW-OFF}) 
and multiply connected uncharged domains, 
we have to work with more flexible bases 
than the one given by standard orthogonal polynomials.

\subsection{Remarks and related work}
\subsubsection{Previous work}
If we specialize to the case when $\mu$ is 
the equilibrium measure of a compact set $K$,
then Theorem~\ref{thm:main-simple} is 
essentially contained in Sinclair
and Yattselev's work \cite{SY}. In this work, they also 
obtain detailed near-boundary blow-up asymptotics
for the correlation kernel. For the case of 
outliers in an annulus, one can obtain
Theorem~\ref{thm:NonSimplyConnected} 
by applying methods similar to 
\cite{BG,GarciaZelada}. 
Coulomb gases subject to a different scale of boundary confinement
is analyzed in the paper \cite{AmeurKangSeo} by Ameur, Kang and Seo.

Random polynomials similar to \eqref{eq:rand-pol} 
have appeared in the literature 
previously, see, e.g.,\ the work 
\cite{ShiffmanZelditch} of Shiffman-Zelditch. 
However, in their work the weight
function is not varying with $\n$, which simplifies matters. 
As a consequence, deriving Theorem~\ref{thm:random-pol} 
can be done more directly using 
general considerations on the Szeg\H{o} kernel, 
and in this context Shiffman and Zelditch 
obtained very precise results on the covariance 
kernels after a blowup. Note that in the special 
case when $\nu$ is the equilibrium
measure of a Jordan curve, the potential $U^\nu$ is constant, 
so this particular
instance becomes a special case of \cite{ShiffmanZelditch}.

\subsubsection{Some natural extensions}
For bounded uncharged regions $\Omega$, 
the assumption in Theorem~\ref{thm:NonSimplyConnected}
that $\mu$ is a probability measure is not necessary, 
and we make it only for convenience.
Indeed, for an external field $V$ of 
sufficient growth at infinity, we let $\mu_V$ 
denote the associated equilibrium measure $\mu_V$ in the sense of 
Saff and Totik \cite{SaffTotik}. Assume that $\Omega$
is a bounded finitely connected component of 
$\C\setminus\mathrm{supp}(\mu_V)$ such that
$\Delta V=2\pi\mu_V$ in a neighborhood of $\overline{\Omega}$, 
so that in particular $\Delta V=0$ in $\Omega$. Then, the proof of 
Theorem~\ref{thm:NonSimplyConnected} 
goes through word for word, provided that
$\mu_V$ satisfies an analogous regularity 
condition to Definition~\ref{def:admissibility}.

Finally, we would like to point out that
Theorem~\ref{thm:NonSimplyConnected}
could be generalized
to deal with line bundles over
Riemann surfaces as in the framework described by Berman
\cite{Berman}. In this geometric setting, 
the non-negativity condition of $\mu$
becomes a  non-negativity condition
on the curvature of the Hermitian line bundle and the
limiting point processes
obtained are similar to 
the ones described in Section~\ref{s:LogHarmonicBergman}. 

\subsubsection{Further questions}
\label{s:bound-number}
In this paper, we have only analyzed limiting 
outlier processes as $\n\to\infty$.
It is natural to also ask if it is possible to define 
outliers also for finite $\n$, but we do not
attempt to do this here. Other interesting 
questions that we do no pursue here concern the blow-up behavior of
$\Theta_\n$ near $\partial\Omega$, and possible limiting outlier
processes for weakly confined Coulomb gases at other inverse temperatures.
Finally, Theorem~\ref{thm:main-simple} and Theorem~\ref{thm:NonSimplyConnected} 
should hold under considerably less restrictive regularity conditions
on the measure $\mu$. We have made no attempt 
at optimizing our argument in this direction.

\subsection{Notation}
\label{s:not}
We write $\T, \D, \R$ and $\C$ for the unit circle, unit disk, 
real line and complex plane, respectively.
The extended complex plane
(or Riemann sphere) is denoted by $\widehat{\C}=\C\cup\{\infty\}$. 
We recall the definition
of the Wirtinger derivatives,
\[
\partial=\partial_z=\frac12\Big(\partial_x-\imag\partial_y\Big),\qquad
\bar\partial=\partial_{\bar z}
=\frac12\Big(\partial_x+\imag\partial_y\Big),\qquad z=x+\imag y.
\]
The Cauchy-Riemann equations are encoded in the operator $\bar\partial$
so that a complex differentiable function $f$
on a complex domain is holomorphic if $\bar\partial f=0$, and
its complex derivative is given by $f'(z)=\partial f(z)$.

We will make frequent use of standard 
notation to compare asymptotic quantities.
Specifically $f=\Ordo(g)$ and
$a\lesssim b$ mean that $f/g$ and $a/b$ are bounded
under the limit procedure in question. The notation $f=\ordo(g)$ means that
$f/g\to 0$, while $a\asymp b$ means that $a\lesssim b$ and $b\lesssim a$ holds
simultaneously. If, for two functions $f$ and $g$, 
the quotient $f/g$ is a non-zero 
constant, we write $f\propto g$.

We denote the {\em class of $x$} in $\mathbb R/\mathbb Z$ as
\[
\class{x}= x\, \mathrm{mod}\, 1\in \R/\Z.
\]
If we have a sequence $x_\n$ so that the classes 
converge $\class{x_\n}\to \class{x}$ 
with $x\in[0,1)$, then we choose to represent 
$\class{x_\n}$ by the unique real number
(also denoted $\class{x_\n}$, and depending on 
$x$)
with $x_\n-\class{x_\n} \in\Z$ and
\[
\class{x_\n} \in \big[x-\tfrac12,x+\tfrac12\big).
\]
Thus, if $\class{x_\n}$ is convergent as an element of the circle, 
there exists a sequence of integers $k_\n$ such that 
$x_\n=k_\n+\class{x_\n}$ and $\class{x_\n} \to x\in [0,1)$
as a real number.
In other words, we \emph{cut open} the circle $\R/\Z$ 
diametrically opposite to the limit point $x$.

We will use the notation $L^2(\Omega, \rho)$ for the $L^2$ space 
with respect to the measure $\rho\,\diffA_{\Omega}$ where $\m_{\Omega}$ 
is the restriction of the Lebesgue measures on $\C$ to $\Omega$. 

Finally, the number of points in a finite set $E$ is denoted by $\#(E)$.

\subsubsection{List of various kernels associated to the jellium}
For the convenience of the reader, we gather here brief definitions of 
the various kernels which will appear in 
our description of the jellium. Below, $\Omega$ is a domain and $\rho$
is a weight function on $\Omega$.

\begin{itemize}[leftmargin=5.5mm]
\item $K_{\rho, \Omega}$ is the Bergman kernel of the space 
$A^2_\rho(\Omega)$ of holomorphic 
functions on $\Omega$ in $L^2(\Omega,\rho\,\diffA)$. 
If $(\varphi_k)_{k \geq 1}$
is an orthonormal basis of $A^2_\rho(\Omega)$ then
\[
\hker_{\rho,\Omega}(z,w)
= \sum_{k=1}^\infty 
\varphi_k(z) \overline{\varphi_k(w)}, \quad  z,w \in \Omega.
\]
\item $\kerK_{\rho,\Omega}$ is the weighted Bergman kernel 
of the space $A^2_\rho(\Omega)$. It is defined by 
\[
\kerK_{\rho, \Omega} (z,w)= K_{\rho,\Omega}(z,w)
\rho(z)^{1/2}\rho(w)^{1/2}, \quad z,w \in \Omega.
\]
\item $B_{\Omega}$ is the classical Bergman kernel of $\Omega$.
It corresponds to the case where the weight $\rho$ is constant 
and equal to $1$. In this case, the weighted and unweighted Bergman 
kernels are equal: $B_{\Omega}=K_{1,\Omega}=\kerK_{1, \Omega}$. 
\item $\BergK_{\Omega, \mathbf Q}$ is a weighted Bergman kernel 
which extends the classical Bergman kernel $B_{\Omega}$ when $\Omega$ 
is not simply connected. This kernel is defined by the relation
\[
\BergK_{\Omega, \mathbf Q} =\mathcal K_{\omega_{\mathbf Q}, \Omega}.\] 
Here, $\bf Q$ and $	\omega_{{\bf Q}}$ are defined in \eqref{eq:weights}.
\item $K_{\n}$ is the polynomial Bergman kernel associated to a determinantal 
point process. It is only defined in the appendix and is not used in the 
body of the paper. 
\item $\kerK_{\n}$ is the correlation kernel of the determinantal point process 
$\Theta_\n$ given by \eqref{eq:law} (with Lebesgue measure
as background measure; see Appendix~\ref{app:David}).
By definition, if we set $\rho_{\n}=e^{-\kappa_\n U^\mu}$, 
then $\kerK_{\n} = \kerK_{\rho_\n,\C}$.
\item $\modkerK_\n$ is a modified version of $\kerK_{\n}$, which is only 
defined on some $\Omega' \supset \Omega$ and which coincides with 
$\kerK_{\n}$ on $\Omega$. This kernel is only an intermediate step 
in order to define the holomorphic kernel is the next item.
\item $\holkerK_{\n}$ is the holomorphic kernel and is defined 
by the relation 
\[
\holkerK_{\n}(z,w)=\modkerK_\n(z,w)\omega_{\class{\kappa_\n{\bf q}}}^{-\frac12}(z)
\omega_{\class{\kappa_\n{\bf q}}}^{-\frac12}(w), \quad z,w \in \Omega.
\]
As its name suggests, it is holomorphic and defines the 
same process as $\kerK_{\n}\big\vert_{\Omega^2}$ on $\Omega$.
\end{itemize}

\section{Background on Bergman spaces and potential theory}
In this section we have collected preliminary
material on Bergman spaces and potential theory.
We begin in Section~\ref{s:Bergman} by
recalling the standard notion
of Bergman spaces and Bergman kernels
in the weighted setting. Then,
in Section~\ref{s:LogHarmonicBergman},
we consider
log-harmonic weights and define
the notion of a Bergman point process that appear in
our theorems. Section~\ref{ss:harm-ext}
is about an harmonic extension of the potential while
Section~\ref{TransformationKerne}
deals with finding an equivalent kernel.
\subsection{Bergman spaces and kernels}
\label{s:Bergman}

Let $\mathcal D \subset \mathbb C$ 
be an open set and let $\rho$ be a continuous 
positive {\em weight function} on
$\mathcal D$. The Bergman space $A^2_\rho(\mathcal D)$, also denoted
by $A^2(\mathcal D,\rho)$, is
the collection of all holomorphic functions
in $L^2(\mathcal D,\rho)$. It is well-known
that $A^2_\rho(\mathcal D)$ is a reproducing kernel
Hilbert space, and we denote its reproducing kernel by
$\hker_{\rho,\mathcal D}(z,w)$.
More precisely, $\hker_{\rho,\mathcal D}$ is the
kernel of
the orthogonal projection of  
$L^2(\mathcal D,\rho)$ onto $A^2_\rho(\mathcal D)$
and if $(\varphi_k)_{k \geq 1}$
is an orthonormal basis of $A^2_\rho(\mathcal D)$ then
\[
\hker_{\rho,\mathcal D}(z,w)
= \sum_{k=1}^\infty 
\varphi_k(z) \overline{\varphi_k(w)},
\]
where the convergence in the series is locally
uniform.
We denote by $\kerK_{\rho,\mathcal D}$ the weighted
\emph{correlation kernel}, which is given by
\[
\kerK_{\rho,\mathcal D}(z,w)=K_{\rho,\mathcal D}(z,w)
\rho^{\frac12}(z)\rho^{\frac12}(w).
\]
When the weight $\rho$ involves a parameter $\n$, we will
write $K_\n$ and $\kerK_\n$ for the kernels,
and $\lVert \,\cdot\,\rVert_\n$ and $\langle \cdot,\cdot\rangle_\n$
for the $L^2$-norm and inner product, 
respectively, when no confusion should occur.

For weakly confining weights $\e^{-2\kappa_\n U^\mu}$, the 
Bergman space $A^2(\C,\e^{-2\kappa_\n U^\mu})$ 
coincides with the $\n$-dimensional space $\C_{\n-1}[z]$ of polynomials
of degree at most $\n-1$ (see \eqref{eq:holom-infinity} below).  
If $\mathcal D$ is not the whole space, then
there may of course be non-polynomial elements 
of $A^2(\mathcal D,\e^{-2\kappa_\n U^\mu})$,
and we instead introduce the polynomial subspaces 
\[
A^2_\n(\mathcal D,\e^{-2\kappa_\n U^\mu})
:=A^2(\mathcal D,\e^{-2\kappa_\n U^\mu})\cap\C_{\n-1}[z].
\]

\subsection{Bergman point processes on an uncharged background}
\label{s:LogHarmonicBergman}
Let $\mathcal D \subset \mathbb C$ be an open set
and fix a positive weight function $\rho$ on $\mathcal{D}$.

\begin{defn}
\label{def:Bergman-process}
The \emph{weighted Bergman point process} $\BergP_{\rho,\mathcal{D}}$
associated with the weight $\rho$ is
the determinantal point process on $\mathcal D$
associated with the kernel 
$\mathcal K_{\rho,\mathcal D}$ and
the Lebesgue measure restricted
to $\mathcal D$. 
\end{defn}

When the weight is constant $\rho\equiv 1$, we write simply 
$\BergP_{\mathcal{D}}$ for the
\emph{(unweighted) Bergman point process}.
For weights $\omega_{\mathbf{Q}}$ of the form 
\eqref{eq:weights} on a domain $\Omega$, 
we denote the weighted Bergman point process by
$\BergP_{\mathbf{Q},\Omega}$. As the notation 
suggests, $\BergP_{\mathbf{Q},\Omega}$
only depends on the class $\mathbf{Q}$ of 
the weight $\omega_{\mathbf{Q}}$.
The remainder of this section is devoted to 
analyzing such processes and the
associated kernels.

\medskip

The following proposition
shows that the law of $\BergP_\rho=\BergP_{\rho,\calD}$ is stable under a
wide class of perturbations of the weight.

\begin{prop}
\label{prop:Freedom}
Let $\mathcal D \subset \mathbb C$ 
be an open set and let $\rho$ be a continuous strictly positive
function on $\mathcal D$. 
Fix a zero-free holomorphic function $f$
on $\mathcal D$ and let
$\rho_{f} = |f|^2\rho$. Then,
\[
\BergP_{\rho} = 
\BergP_{\rho_f}\quad \text{ in law}.
\]
In particular, 
$\mathcal K_{\rho}(z,z) =
\mathcal K_{\rho_f}(z,z)$ for every
$z \in \mathcal D$. Moreover, if $\rho$
and $\widetilde \rho$ are two strictly positive 
continuous weights on $\mathcal D$ such that
$\BergP_{\rho} = 
\BergP_{\widetilde \rho}$ in law and if we suppose that
$\BergP_{\rho}$ is not the empty point process, then
\[\widetilde \rho=|f|^2\rho\]
for some zero-free holomorphic function
$f$ on $\mathcal D$.
\end{prop}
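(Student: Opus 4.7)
My plan is to handle the two implications via the reproducing kernel structure of the weighted Bergman spaces. Throughout I write $K_V$ and $\mathcal{K}_V$ for the reproducing kernel of $A^{2}(\mathcal{D},e^{-2V})$ and the associated correlation kernel, respectively.

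\emph{Forward direction.} Because $f$ is holomorphic and zero-free on $\mathcal{D}$, the identity $|f\varphi|^{2}e^{-2V}=|\varphi|^{2}e^{-2V_f}$ exhibits multiplication by $f$ as a surjective linear isometry $A^{2}(\mathcal{D},e^{-2V_f})\to A^{2}(\mathcal{D},e^{-2V})$. Transporting an orthonormal basis across yields $K_V(z,w)=f(z)\overline{f(w)}K_{V_f}(z,w)$, and inserting the weights gives
\[
\mathcal{K}_V(z,w) = g(z)\overline{g(w)}\,\mathcal{K}_{V_f}(z,w),\qquad g(z):=f(z)/|f(z)|.
\]
Since $|g|\equiv 1$, the diagonal values coincide; moreover, for every $k$ and every $z_1,\dots,z_k$,
\[
\det\bigl[\mathcal{K}_V(z_i,z_j)\bigr]_{i,j=1}^{k} = \Bigl|\prod_i g(z_i)\Bigr|^{2}\det\bigl[\mathcal{K}_{V_f}(z_i,z_j)\bigr]_{i,j=1}^{k} = \det\bigl[\mathcal{K}_{V_f}(z_i,z_j)\bigr]_{i,j=1}^{k},
\]
so all correlation functions agree and $\BergP_V=\BergP_{V_f}$ in law.

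\emph{Converse.} Now suppose $\BergP_V=\BergP_{\widetilde V}$ in law with $\BergP_V$ not the empty process, so that neither $K_V$ nor $K_{\widetilde V}$ is identically zero. Equality of two-point correlation functions gives $|\mathcal{K}_V(z,w)|=|\mathcal{K}_{\widetilde V}(z,w)|$ pointwise, which in terms of reproducing kernels reads
\[
|K_V(z,w)| = e^{u(z)+u(w)}\,|K_{\widetilde V}(z,w)|,\qquad u:=V-\widetilde V,
\]
with $u$ harmonic on $\mathcal{D}$. Consequently $K_V$ and $K_{\widetilde V}$ have the same zero divisor, so the ratio $R(z,w):=K_V(z,w)/K_{\widetilde V}(z,w)$ extends across those zeros to a function on $\mathcal{D}\times\mathcal{D}$ that is holomorphic in $z$, anti-holomorphic in $w$ and zero-free, with modulus $|R(z,w)|=e^{u(z)+u(w)}$.

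The crux is to factorise $R$. For any two $w_0,w_1$ with $K_{\widetilde V}(\cdot,w_j)\not\equiv 0$, the quotient $R(\cdot,w_0)/R(\cdot,w_1)$ is holomorphic with constant modulus, hence constant, so $R(z,w)=\lambda(w)f(z)$ for some holomorphic zero-free $f$ and some function $\lambda$. Hermitian symmetry of both reproducing kernels yields $R(w,z)=\overline{R(z,w)}$; combined with the factorisation, this forces $\lambda(z)=a\,\overline{f(z)}$ for a constant $a\in\mathbb{C}$, and the diagonal identity $a|f(z)|^{2}=e^{2u(z)}>0$ pins down $a$ as a positive real. Absorbing $\sqrt{a}$ into $f$ we arrive at $R(z,w)=f(z)\overline{f(w)}$ with $|f|^{2}=e^{2u}$, i.e.\ $\log|f|=V-\widetilde V$. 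Replacing $f$ by $1/f$, which remains holomorphic and zero-free, yields $V=\widetilde V-\log|1/f|$, as required. The main obstacle is the factorisation of $R$; the remainder reduces to straightforward bookkeeping with reproducing kernels.
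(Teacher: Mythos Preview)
Your proof is correct and follows the same approach as the paper: the forward direction via the multiplication-by-$f$ isometry between the weighted Bergman spaces is identical, and for the converse both arguments extract $|K_V(z,w)|=e^{u(z)+u(w)}|K_{\widetilde V}(z,w)|$ from the one- and two-point correlation identities and then build $f$ from a ratio of kernels. The only difference is that the paper fixes a single $w_0$ and sets $f(z)=e^{-u(w_0)}K_V(z,w_0)/K_{\widetilde V}(z,w_0)$, reading off $|f|=e^{u}$ immediately, whereas you take the additional (but equivalent) step of factorising the full two-variable ratio $R(z,w)=f(z)\overline{f(w)}$ via constant-modulus quotients and Hermitian symmetry.
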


\begin{proof}
Let us first prove the first part.
Suppose that $\rho$ is continuous and strictly positive
and that $f$ is zero-free and holomorphic.
By the definition of determinantal point processes,
we need to show that
$\det\left(\mathcal K_{\rho}(x_i,x_j)_{1 \leq i,j \leq k}\right)
=\det\left(\mathcal K_{\rho_f}(x_i,x_j)_{1 \leq i,j \leq k}\right)$ 
for every $k \geq 1$ and any points
$x_1,\dots,x_k \in \mathcal D$.
But, since $\rho_f= |f|^2 \rho$,
the map given by
\[g \in L^2(\mathcal D,\rho_f ) \mapsto
fg \in L^2(\mathcal D,\rho) \] 
is a well-defined isometry 
that sends
$A^2_{\rho_f}(\mathcal D)$ onto 
$A^2_{\rho}(\mathcal D)$
so that the corresponding orthogonal projections 
are conjugates of each other,
which is written as
\[K_{\rho}(z,w) =
f(z) K_{\rho_f}(z,w) f(w)^{-1}\]
for every $z,w \in \mathcal D$. By the definition of
$\mathcal K$, this implies that
\[\mathcal K_{\rho}(z,w) =
\frac{f(z)}{|f(z)|} \mathcal K_{\rho_f}(z,w) 
\frac{\overline{f(w)}}{|f(w)|},\]
for every $z,w \in \mathcal D$, which 
implies that the laws of
$\BergP_{\rho}$ and $\BergP_{\rho_f}$ are the same
and that 
$\mathcal K_{\rho}(z,z) =
\mathcal K_{\rho_f}(z,z)$ for every $z \in \mathcal D$.

For the second part, let
$\rho$ and $\widetilde \rho$ be two
continuous and strictly positive weights on $\mathcal D$
such that $\BergP_{\rho} = 
\BergP_{\rho_f}$ in law.
For simplification, we use the notation 
$K=K_{\rho}$ and 
$\widetilde K = K_{\widetilde \rho}$ for the kernels, which 
we recall are 
holomorphic in the first argument
and anti-holomorphic in the second. 
We also use the notation
$\mathcal K = \mathcal K_{\rho}$
and $\widetilde {\mathcal K} = 
{\mathcal K}_{\widetilde \rho}$ for the weighted (correlation) kernels.
The equality in law $\BergP_{\rho} = 
\BergP_{\widetilde \rho}$ implies that
\[
\det\left(\mathcal K(z_i,z_j)_{1\leq i,j\leq k}\right) = 
\det\left(\widetilde{\mathcal K}(z_i,z_j)_{1 \leq i,j \leq k}\right).
\]
for every $k \geq 1$ and 
$z_1,\dots,z_k \in \mathcal D$.
In particular,  we have that
$\mathcal K(z,z) = \widetilde{\mathcal K}(z,z) $
while
$\mathcal K(z,z)\mathcal K(w,w) - 
\mathcal K(z,w)\mathcal K(w,z) = 
\widetilde{\mathcal K}(z,z) \widetilde{\mathcal K}
(w,w) - 
\widetilde{\mathcal K}(z,w)
\widetilde{\mathcal K}(w,z)  $
for every $z,w \in \mathcal D$, which are the $k =1$
and $k=2$ cases.
Since the kernels are Hermitian, these two
cases imply
that $|\mathcal K(z,w)|
=|\widetilde{\mathcal K}(z,w)|$
for every $z,w \in \mathcal D$. Then,
\begin{equation} 
\label{eq:EqualityOfNorms}
|K(z,w)| \rho(z)\rho(w)
=|\widetilde{K}(z,w)|
\widetilde{\rho}(z)\widetilde{\rho}(w)
\end{equation}
for every $z,w \in \mathcal D$. Now, either
$K$ and $\widetilde K$
are zero everywhere
or there is some
$w_0 \in \mathcal D$ such that
the maps
$z \mapsto K(z,w_0)$ and
$z \mapsto \widetilde K(z,w_0)$
are non-zero holomorphic functions whose
zero sets coincide (and are discrete).
The first case is discarded  because $\BergP_{\rho}$
is not the empty process so that we can
consider the meromorphic function $f$ defined by
$f(z) = K(z,w_0)\rho(w_0)/(\widetilde{K}(z,w_0)\widetilde{\rho}(w_0))$.
Then, \eqref{eq:EqualityOfNorms} implies that
\[
|f(z)|
= \frac{\widetilde{\rho}(z)}{\rho(z)}
\]
for every $z \in \mathcal D$ such that 
$\widetilde K(z,w_0) \neq 0$.
Since $\widetilde{\rho}(z)/\rho(z)$ is locally bounded, $f$
has only removable singularities
so that it can be extended holomorphically to $\mathcal D$
and, by continuity, $|f|= \widetilde{\rho}/\rho$ on $\mathcal D$.
Taking logarithms completes the proof.
\end{proof}

Write $\rho = e^{-2V}$ for
some continuous function
$V$ on $\mathcal D$. We say that $\rho$
is log-harmonic if $V$ is harmonic, i.e., $\Delta V = 0$.
In view of Gauss's law of electrostatics, 
$V$ can be thought of as a possible potential
for an uncharged domain $\mathcal D$ which makes
it plausible that the
weights $\rho=e^{-2V}$ are the relevant ones 
in the context of outliers.

\begin{rem}[Flat line bundles] \label{rem:flat}
The notion of a log-harmonic measure $\rho$
makes sense on any open Riemann surface $M$
since any holomorphic change of coordinates
preserves the log-harmonicity.
In this way, Definition~\ref{def:Bergman-process}
can be used to define an analogous family of
Bergman point processes on $M$
and
Proposition~\ref{prop:Freedom} also holds in this case.
Moreover,
the freedom described in
Proposition~\ref{prop:Freedom}
is closely related 
to the classification of flat Hermitian line bundles
on $M$. This is not a coincidence
since, by noticing that
every line bundle is trivial on the open Riemann surface $M$, 
we may rephrase
our setting by considering Hermitian line bundles
on $M$  similarly to \cite{Berman} and the log-harmonic measures
would essentially play the role of
flat Hermitian metrics.
In particular, the set of equivalence
classes obtained by 
making two log-harmonic measures 
$\mu$ and $\widetilde \mu$ equivalent if
and only if 
$\diff \mu = |f|^2 \diff \widetilde \mu$ for some 
zero-free holomorphic function $f$ on $M$
is in one-to-one correspondence with the set
of morphisms from the fundamental group $\pi_1(M)$ 
to the unit circle $U(1)$ 
(the Pontryagin dual of $\pi_1(M)$).
This is stated
for a particular case 
in Proposition~\ref{thm: bergman continuity}.
\end{rem}

Below, we will consider domains appearing as uncharged
regions, and weights of the following form. If $\Omega$ is 
an $l$-connected domain,
we let
$z_1,\dots,z_l \in \mathbb C \setminus \Omega$ be a collection of points, 
one in each hole of $\Omega$, and let
$V(z)=
{\bf Q}\cdot \Biglog(z)$ for some
$\mathbf Q \in \mathbb R^l$,
where we have defined
\[
\Biglog(z) =
\Biglog_{z_1,\dots,z_l}(z):=
\Big(\log |z-z_1|, \dots, \log|z-z_l|\Big).
\]
This gives us a
weight of the form \eqref{eq:weights},
\[
\omega_{\bf Q}(z)=\e^{-2{\bf Q}\cdot \Biglog(z)}.
\]
Suppose, for simplicity,
that the holes of $\Omega$
have non-empty interior. Then,
by Axler \cite[p. 248]{Axler} and
the previous proposition, it follows that 
for every harmonic function $V$
there exists a unique
$\mathbf Q \in [0,1)^l$ such that
$\BergP_{e^{-2V}} = \BergP_{e^{-2\mathbf Q \cdot \Biglog}}$.
Moreover, for general
$\mathbf Q =(Q_1,\dots,Q_l) \in \mathbb R^l$, the Bergman process
$\BergP_{\mathbf Q \cdot \Biglog(z)}$ only depends
on $(e^{2\pi i Q_1},\dots,e^{2\pi i Q_l})$
or, equivalently, on 
$(\class{Q_1},\dots,\class{Q_l}) \in 
(\mathbb R \setminus \mathbb Z)^l$.
For this reason, there is no harm
in thinking about $\mathbf Q$ as an element of 
$(\mathbb R \setminus \mathbb Z)^l$.
To keep the notation short, we will denote the Bergman process 
$\BergP_{e^{-2\mathbf Q \cdot \Biglog(z)}}$
by $\BergP_{\mathbf Q}:=\BergP_{e^{-2\mathbf{Q}\cdot \Biglog}}$ 
and furthermore the weighted kernel
$\mathcal K_{\omega_{\mathbf Q}, \Omega}$
by $\BergK_{\Omega, \mathbf Q}$.
We notice that although the kernel
$\BergK_{\Omega, \mathbf Q}$ depends
on $\mathbf Q \in \mathbb R^l$,
the diagonal restriction
$\BergK_{\Omega, \mathbf Q} (z,z)$
depends only on the class
$(\class{Q_1},\dots,\class{Q_l})$.

\begin{rem}[Conformal equivariance of weighted Bergman processes]
\label{conformal_equivariance_bergman}
Suppose that $\phi$ is a conformal mapping of a domain
$\Omega_{1}$ onto another domain $\Omega_2$.  
Then, for any weight $\omega$ on $\Omega_1$, 
we have $\phi(\Pi_{\omega,\Omega_1}) 
= \Pi_{\omega \circ \phi, \Omega_2}$.
In particular, if $\Omega_1$ is $l$-connected, then 
for any $\mathbf Q  \in \mathbb (\R/\Z)^l$, there exists 
$\mathbf Q'  \in \mathbb (\R/\Z)^l$ 
(directly determined by $\bf Q$ and $\phi$)
such that $\phi(\BergP_{\Omega_1,\mathbf Q}) 
= \BergP_{\Omega_2,\mathbf Q'} $ in law.	
\end{rem}

Let $\mathcal C_\Omega$ denote the space
of point configurations on $\Omega$ 
(see Section~\ref{app:David}) and let
$\mathcal P(\mathcal C_\Omega)$ be
the set of probability measures 
on $\mathcal C_{\Omega}$ endowed with
the weak topology. 

\begin{prop}[Continuous injective
parametrization]
\label{thm: bergman continuity}
The function defined by\break
 $(z,\mathbf Q) \in \Omega \times (\mathbb R/\mathbb Z)^l
\mapsto \BergK_{\Omega,{\bf Q}}(z,z)$ 
is continuous. Moreover,
the map  from
$(\mathbb R/\mathbb Z)^l$
to 
$\mathcal P(\mathcal C_\Omega)$
that associates
$\mathbf Q 
\in (\mathbb R/\mathbb Z)^l$
to the law of $\BergP_{\bf Q}$
is injective and continuous.

\end{prop}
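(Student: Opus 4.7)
The plan is to prove, in order, joint continuity of $(z,\mathbf{Q})\mapsto \BergK_{\Omega,\mathbf{Q}}(z,z)$, then continuity of the law map $\mathbf{Q}\mapsto\mathrm{law}(\Pi_{\mathbf{Q}})$, and finally its injectivity. The gauge invariance from Proposition~\ref{prop:Freedom}, applied to $f=\prod_j(z-z_j)^{-n_j}$ with $n_j\in\mathbb{Z}$, already ensures that $\BergK_{\Omega,\mathbf{Q}}(z,z)$ and every correlation determinant $\det\bigl(\BergK_{\Omega,\mathbf{Q}}(w_i,w_j)\bigr)_{i,j}$ only depend on $\mathbf{Q}\bmod\mathbb{Z}^l$; so given a torus-convergent sequence $\mathbf{Q}_n\to\mathbf{Q}$ in $(\mathbb{R}/\mathbb{Z})^l$, one may lift to representatives converging in $\mathbb{R}^l$ without losing information.

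For the joint continuity of the diagonal, I would use the extremal characterization
\[
\BergK_{\Omega,\mathbf{Q}}(z,z)=\sup\Bigl\{|f(z)|^2\,\omega_{\mathbf{Q}}(z) :\, f\in A^2_{\omega_{\mathbf{Q}}}(\Omega),\ \lVert f\rVert_{\omega_{\mathbf{Q}}}\le 1\Bigr\}.
\]
After lifting, the weights $\omega_{\mathbf{Q}_n}$ converge to $\omega_{\mathbf{Q}}$ locally uniformly on $\Omega$, since the base points $z_j$ lie at positive distance from $\Omega$. The $\liminf$ direction is obtained by testing a near-extremizer for $\mathbf{Q}$ against the perturbed norms and renormalizing, using dominated convergence. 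For the $\limsup$ direction, the extremizers $f_n$ for $\mathbf{Q}_n$ form a locally uniformly bounded family of holomorphic functions, so Montel extracts a locally uniform subsequential limit $f_\star$; Fatou gives $\lVert f_\star\rVert_{\omega_{\mathbf{Q}}}\le 1$, so $f_\star$ is admissible and the matching upper bound follows. Joint continuity in $(z,\mathbf{Q})$ then comes from pointwise convergence combined with local equicontinuity of the family.

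Weak convergence of determinantal processes on $\Omega$ is equivalent to locally uniform convergence of all $k$-point correlation functions $\rho_k=\det\bigl(\BergK_{\Omega,\mathbf{Q}}(w_i,w_j)\bigr)_{i,j}$ on $\Omega^k$. Running the same Montel/extremal scheme on the normalized reproducing element $z\mapsto \BergK_{\Omega,\mathbf{Q}_n}(z,w_0)/\BergK_{\Omega,\mathbf{Q}_n}(w_0,w_0)^{1/2}$ and invoking uniqueness of the reproducing kernel of $A^2_{\omega_{\mathbf{Q}}}(\Omega)$ delivers locally uniform convergence of the off-diagonal kernels (up to a phase which is killed in the determinants), giving continuity of the law map. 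For injectivity, suppose $\Pi_{\mathbf{Q}}=\Pi_{\mathbf{Q}'}$ in law; non-emptiness is guaranteed because rational functions $\prod_j(z-z_j)^{-m_j}$ with sufficiently large positive integer $m_j$ belong to $A^2_{\omega_{\mathbf{Q}}}(\Omega)$. Proposition~\ref{prop:Freedom} then yields a zero-free holomorphic $f$ on $\Omega$ with $(\mathbf{Q}'-\mathbf{Q})\cdot\Biglog=\log|f|$. Integrating $\diff\arg f$ along a simple loop $\gamma_k\subset\Omega$ winding once around $z_k$ and trivially around the other $z_j$'s, the period must equal both $2\pi(Q_k'-Q_k)$ (from the explicit formula) and an integer multiple of $2\pi$ (since $f$ is single-valued), forcing $Q_k'-Q_k\in\mathbb{Z}$ for every $k$; so $\mathbf{Q}=\mathbf{Q}'$ in the torus.

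The main obstacle is the Montel extraction step when $\Omega$ is unbounded: one must rule out the possibility that the subsequential limit $f_\star$ loses $L^2$-mass to infinity, which would spoil admissibility. Here the stability of the decay exponent $\sum_j Q_{n,j}$ of $\omega_{\mathbf{Q}_n}$ at infinity (guaranteed by boundedness of the lift in $\mathbb{R}^l$), together with sub-mean-value estimates, supplies uniform $L^\infty$ bounds on compacts in terms of $L^2$ norms and secures the required compactness.
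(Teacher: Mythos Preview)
Your proposal is correct and essentially self-contained, whereas the paper's proof is a two-line argument: continuity is delegated entirely to \cite[Theorem~5.1]{Pasternak} (continuous dependence of reproducing kernels on the weight of integration), and injectivity is obtained from Proposition~\ref{prop:Freedom} together with the remark that $\prod_i|z-z_i|^{Q_i}$ is not the modulus of a holomorphic function on $\Omega$ when some $Q_i\in(0,1)$. Your extremal-characterization/Montel/Fatou scheme is precisely the kind of argument that underlies the cited result, specialized to the weights $\omega_{\mathbf Q}$, and your period computation around loops encircling each $z_k$ is the explicit content of the paper's one-line injectivity remark; so the two proofs are the same in spirit, with yours trading brevity for self-containedness. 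Two small points: your assertion that weak convergence of the determinantal processes is \emph{equivalent} to locally uniform convergence of the correlation functions is stronger than what you actually use or need (the forward implication, which is Proposition~\ref{prop:UCVofKernels}, suffices); and the dominated-convergence step in your $\liminf$ argument for unbounded $\Omega$ requires the same pointwise decay $|f(z)|^2\omega_{\mathbf Q}(z)\lesssim|z|^{-2-\varepsilon}$ that appears in~\eqref{eq:holom-infinity}, so it would be worth making that dependence explicit.
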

\begin{proof}
The injectivity 
is a consequence of
Proposition~\ref{prop:Freedom}
together with the fact that
$\prod_{i}|z-z_i|^{Q_i}$ is not the modulus
of a holomorphic function if 
$Q_i \in (0,1)$ for some $i \in \{1,\dots,l\}$.
The continuity claims
are a consequence of Pasternak's theorem
\cite[Theorem~5.1]{Pasternak}.
\end{proof}

\subsection{An upper bound on one-point functions}

Next, we state a lemma
indicating a relation between Bergman kernels 
with log-harmonic weights
and the kernel for the outlier process. 
We use the notations of 
Section~\ref{s:intro}. 
Take an
$l$-connected component
$\Omega$
of $\mathbb C \setminus \supp (\mu)$, where $\mu$ 
is admissible, and 
points $z_1,\dots,z_l \in 
\mathbb C \setminus \Omega$
in each of the holes of $\Omega$.
Denote
the $\mu$-mass of the $i$-th hole of $\Omega$
by $q_i$ and put ${\bf q}=(q_i)_i$.

\begin{lem}[Diagonal domination]
\label{lem:monotone}For every $N \geq 1$ 
and $z \in \Omega$
\[
\kerK_\n(z,z)
\leq \BergK_{\Omega, {\kappa_{\vspace{1mm}\n}} {\bf q}} (z,z).
\]
\end{lem}

\begin{proof} We start by noticing that, for weakly confining 
weights $\omega_\n=\e^{-2\kappa_\n U^\mu}$, the 
Bergman space $A^2(\C,\omega_\n)$ 
coincides with the $\n$-dimensional space $\C_{\n-1}[z]$ of polynomials
of degree at most $\n-1$ (see \eqref{eq:holom-infinity} below 
combined with Liouville's theorem), hence  $\kerK_\n = \kerK_{\omega_\n,\C}$.
In addition, we notice that
$\BergK_{\Omega,\kappa_\n {\bf q}}(z,z)=
\kerK_{{\omega_N}|_\Omega, \Omega}(z,z)$. This is a consequence
of Lemma~\ref{lem: holom extension}
together with
Proposition~\ref{prop:Freedom}.
If we denote by
$\|f\|_N$ the norm of 
$f$ in $L^2(\mathbb C,\omega_N)$
and by
$\lVert f\rVert_{\n,\Omega}$
the norm of 
$f|_{\Omega}$ in
$L^2(\Omega,\omega_N|_\Omega)$,
we can write the extremal representation of the kernels
\[
\kerK_\n(z,z)=\sup_{f\in A^2_{\omega_\n}(\C)}\frac{|f(z)|^2
\omega_\n(z)}{\lVert f\rVert^2_{\n}}
\quad \text{ and }
\quad
\kerK_{{\omega_N}|_\Omega, \Omega}(z,z)
= \hspace{-3mm}\sup_{f\in 
A^2(\Omega,\omega_\n|_\Omega)}
\hspace{-2mm}
\frac{|f(z)|^2
\omega_\n(z)}{\lVert f\rVert^2_{\n,\Omega}}.
\]
Then, since
$\lVert f\rVert_\n\ge \lVert f\rVert_{\n,\Omega}$ and
$A^2_{\omega_\n}(\C)
\subset A^2(\Omega,\omega_\n|_\Omega)$,
we find the sought inequality.
\end{proof}

\subsection{Harmonic extension of potentials.}
\label{ss:harm-ext} 
Assume that $f$ is a holomorphic function on a domain $\Omega$, and that 
for some $\epsilon>0$ and $z_0\in\partial\Omega$, 
$I:=\partial\Omega\cap\D(z_0,\epsilon)$ 
is a real-analytic arc. By Carathéodory's theorem, 
$f$ extends continuously to $I$. Suppose further that
$f$ maps $\Omega\cap\D(z_0,\epsilon)$ 
univalently onto a domain in the unit disk, 
such that $f(I)$ is a sub-arc of the circle. 
Then $f$ extends to a univalent function of a neighborhood
of $I$ (see, for instance, Pommerenke's book \cite[Theorem 6.2]{Pommerenke}).
In fact, similar extension properties hold for 
any holomorphic or harmonic function which is
defined on one side of an analytic curve with
real-analytic extension to the boundary.
We will routinely apply such extensions,
and will usually not make any distinction between
the original function and the extended one in terms of notation.

The only function we will deal with for which the extension property
is not entirely obvious is the potential $U^\mu$.
\begin{prop}[Harmonic extension]
\label{prop:extension1}
Under the assumptions of Theorem~\ref{thm:NonSimplyConnected},
the harmonic function $U^\mu\vert_{\Omega}$ extends
past $\partial\Omega$ to a harmonic function $\calU=\calU^\mu$ 
on an open set $\Omega'$ containing $\overline{\Omega}$.
\end{prop}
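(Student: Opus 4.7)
The plan is to reduce the harmonic extension of $U^\mu$ across $\partial\Omega$ to a \emph{holomorphic} extension of the Cauchy transform $\calC\mu(z)=\int(z-w)^{-1}\diff\mu(w)=2\partial U^\mu(z)$, which is already holomorphic in $\Omega$. Once $\calC\mu|_\Omega$ is shown to extend holomorphically to a neighborhood of each point of $\partial\Omega$, the closed real $1$-form $\tfrac12(\calC\mu\,\diff z+\overline{\calC\mu}\,\diff\bar z)$ admits a local real primitive, automatically harmonic, which agrees with $U^\mu$ on the $\Omega$-side up to an additive constant. Uniqueness of harmonic continuation across any open subset of $\Omega$ then allows these local extensions to be patched into a single $\calU^\mu$ defined on an open set $\Omega'\supset\overline{\Omega}$.

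Fix $z_0\in\partial\Omega\subset\partial D\cup\Gamma$ and choose concentric disks $B'\Subset B$ centered at $z_0$ so small that $B$ meets $\supp(\mu)$ only along a single analytic arc $\gamma$ of $\partial D\cup\Gamma$. Split $\mu=\mu_1+\mu_2$ with $\mu_1=\mu|_{B'}$; then $U^{\mu_2}$, hence $\calC\mu_2$, is real-analytic on $B'$, so it suffices to extend $\calC\mu_1$. Decompose $\mu_1=\mu_1^{\mathrm{line}}+\mu_1^{\mathrm{area}}$ according to the splitting of $\mu$ into its arc-length and area parts. For the line contribution, parameterize $\gamma$ by a real-analytic map $s\mapsto\gamma(s)$, $s\in I\subset\R$, so that $\gamma$ and $\rho\circ\gamma$ extend holomorphically to a complex neighborhood of $I$. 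Deforming $I$ to a contour $\tilde I$ displaced away from the $\Omega$-side of $\gamma$,
\[
\calC\mu_1^{\mathrm{line}}(z)=\int_{\tilde I}\frac{\rho(\gamma(s))\,|\gamma'(s)|}{z-\gamma(s)}\,\diff s
\]
defines a holomorphic extension of $\calC\mu_1^{\mathrm{line}}|_{\Omega\cap B'}$ across $\gamma$.

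For the area contribution (only present when $z_0\in\partial D$), I exploit the real-analyticity of $\omega$ near $\partial D$: write $\omega(w)=F(w,\bar w)$ for $F$ holomorphic on a neighborhood of the anti-diagonal in $B\times\bar B$, and pick a holomorphic $G(w,\zeta)$ with $\partial_\zeta G=F$. Then $g(w):=G(w,\bar w)$ is real-analytic on $B$ and satisfies $\dbar g=\omega$. Applying Cauchy--Green to the open set $U=D\cap B'$, for $z\notin\overline U$,
\[
\calC\mu_1^{\mathrm{area}}(z)=-\int_U\frac{\omega(w)}{w-z}\,\diff\m(w)=-\frac{1}{2i}\int_{\partial U}\frac{g(w)}{w-z}\,\diff w.
\]
The piece of $\partial U$ on $\gamma$ carries the real-analytic density $g$, so its contribution extends across $\gamma$ by the same contour-deformation argument as in the line-charge case, while the remainder of $\partial U$ lies on $\partial B'\cap\overline D$, at positive distance from $z_0$, and therefore contributes a function holomorphic on a full neighborhood of $z_0$.

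The main technical obstacle is the area-charge step: without the real-analyticity of $\omega$ in a neighborhood of $\partial D$, the Cauchy--Green reduction of the area integral to a boundary integral with analytic data would fail and $\calC\mu_1^{\mathrm{area}}$ would not in general extend holomorphically past $\gamma$. With this hypothesis granted, the extension becomes a clean consequence of classical contour deformation, and combining the line and area pieces yields the local extension of $\calC\mu$; patching by the uniqueness principle produces the global harmonic extension $\calU^\mu$.
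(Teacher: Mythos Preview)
Your proof is correct and takes a genuinely different route from the paper's. Both localize, split off the distant part of $\mu$, exploit real-analyticity via contour deformation, and patch by unique continuation. The paper extends $U^\mu$ directly: after flattening $\partial\Omega$ by a conformal change of variables, it slices the area measure into a one-parameter family of horizontal line measures, replaces $\log|z-w|$ by a branch of $\log(z-w)$ along each slice, and deforms every segment to a contour $\gamma^{(y)}$ that avoids the target square. You instead extend the holomorphic derivative $\calC\mu=2\partial U^\mu$ and recover $\calU^\mu$ as a local primitive of the resulting closed $1$-form; for the area part you produce a real-analytic $\dbar$-primitive of $\omega$ by polarization and invoke Cauchy--Green to collapse the two-dimensional integral to a single boundary contour integral, which is then deformed in one shot. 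Your approach is arguably cleaner---no one-parameter family of contours, no bookkeeping with the complexified density $\omega(z,y)$---while the paper's explicit integral formula for $\calU^\mu$ is exactly what they differentiate in the next step (Proposition~\ref{prop:extension2}) to identify the first and second normal derivatives of $U^\mu-\calU^\mu$ on $\partial\Omega$ with $2\pi\rho$ and $2\pi\omega$; your formula would yield the same conclusions, but via a Plemelj-type jump computation instead. One wording quibble: the phrase ``$B$ meets $\supp(\mu)$ only along a single analytic arc'' is literally false when $z_0\in\partial D$ with $\omega>0$ there, since $B\cap\supp(\mu)$ then contains an open piece of $D$; you evidently mean that $B\cap(\partial D\cup\Gamma)$ is a single arc, and the remainder of your argument handles the area piece correctly.
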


This result should be absolutely standard, but since we did not locate
an appropriate reference we supply a proof in 
Appendix~\ref{app:proof-harm-ext}.

\begin{prop}
\label{prop:extension2}
Let $\Gamma$ be a boundary component of 
$\partial\Omega$ and let $z_0\in\Gamma$.
Denote by ${\rm n}_0$ the outward unit
normal to $\Omega$ at $z_0$.
Under the assumptions of Theorem~\ref{thm:NonSimplyConnected},
we have
\begin{equation}\label{eq:bound-below}
\big(U^\mu-\mathcal{U}^\mu\big)(z_0+t{\rm n}_{z_0})=
\begin{cases}
c_1t + \Ordo(t^2),& \mu(\Gamma)>0 \\
c_2 t^2 + \Ordo(t^3),& \mu(\Gamma)=0,
\end{cases}
\end{equation}
for $0\le t\le t_0$, where $c_i>0$.
In addition, in each case the implicit constant 
is uniform for $z_0\in\partial\Omega$, and $c_i=c_i(z_0)$ 
depends smoothly on $z_0$.
\end{prop}

\begin{proof}
Assume without loss of generality that $z_0=0$ and that
the normal points in the imaginary direction. We denote the relevant
connected component of $\partial\Omega$ by $\Gamma$.
We split $U^\mu$ as $U^{\mu_1}+U^{\mu_2}$ where 
$\mu_1$ has a density with respect
to $1_{\Omega}dm$ locally near $0$, and where $\mu_2$ is supported on
$\Gamma$. Similarly, we let
$\mathcal{U}=\mathcal{U}_1+\mathcal{U}_2$, where $\mathcal{U}_i$ are harmonic
in $\Omega$ and $U^{\mu_i}-\mathcal{U}_i=0$ on $\Omega$ (that this splitting
of the harmonic extensions is possible follows from the proof of
Proposition~\ref{prop:extension1}). We perform a Taylor expansion
at $\Gamma$ of each contribution $U^{\mu_i}-\mathcal{U}_i$ separately. 
We first find that $(U^{\mu_1}-\mathcal{U}_1)$ vanishes on $\Omega$ 
and is $C^{1,1}$--smooth,
so its gradient also vanishes along $\Gamma$. 
Since $\Gamma$ is tangent to the real line at $0$, a Taylor expansion 
shows that $\partial_x^2(U^{\mu_1}-\mathcal{U}_1)(0)$ equals the
second order tangential derivative of $U^{\mu_1}-\mathcal{U}_1$ at $0$,
which implies that $\partial_x^2(U^{\mu_1}-\mathcal{U}_1)(0)=0$.
This observation allows us to rewrite the second 
derivative
\[
\partial_{y}^2 (U^{\mu_1}-\mathcal{U}_1)(0)=
\partial_{x}^2 (U^{\mu_1}-\mathcal{U}_1)(0)+\partial_{y}^2 
(U^{\mu_1}-\mathcal{U}_1)(0)=\Delta U^{\mu_1}(0),
\]
and hence, we obtain
\[
(U^{\mu_1}-\mathcal{U}_1)(\imag y)=\frac12\partial^2_y 
(U^{\mu_1}-\mathcal{U}_1)(0) y^2 + \Ordo(y^3)
=\frac12\Delta U^{\mu_1}(0)y^2 + \Ordo(y^3),\qquad y>0.
\]
If $\densityCurve\equiv 0$ on $\Gamma$, we are done. 
If not, we have $\densityCurve(0)>0$,
in which case we have
\begin{equation}
\label{eq:taylor-U21}
(U^{\mu_2}-\mathcal{U}_2)(x+\imag y)=(2\pi)\densityCurve(0)y
+\Ordo(y^2),\qquad y>0.
\end{equation}
Hence, we have established the desired formula also in this case.

By compactness of $\Gamma$, the error terms are uniform provided
that $U^{\mu_i}-\mathcal{U}_i\vert_{\Omega^c}$ are $C^3$-smooth in a
one-sided neighborhood of $\Gamma$. But in fact
these functions extend real-analytically to a full 
neighborhood of $\Gamma$, so the uniformity claim follows. That $c_i$
depend smoothly on $z_0\in\Gamma$ is likewise clear.
\end{proof}

\subsection{A transformation of the kernel}
\label{TransformationKerne}
We recall that
for an $l$-connected component
$\Omega$
of $\mathbb C \setminus \supp (\mu)$, where $\mu$ 
is admissible, we fixed
points $z_1,\dots,z_l \in 
\mathbb C \setminus \Omega$
in each of the holes of $\Omega$.
The mass of the $i$-th hole of $\Omega$
is denoted by $q_i$, and we put ${\bf q}=(q_i)$.
We let $\Omega'\supset\overline{\Omega}$ 
be as in Section~\ref{ss:harm-ext}, so that the
harmonic extension $\calU^\mu$ of $U^\mu\vert_{\Omega}$
is harmonic on the domain $\Omega'$. 

The following result is a direct consequence 
of the \emph{logarithmic
conjugation theorem} from Axler
\cite[p. 248]{Axler}.

\begin{lem} \label{lem: holom extension}
There exists a holomorphic function $\mathfrak h$ on $\Omega'$ such that
\[
\calU^\mu(z) - \mathbf q \cdot \Biglog(z)
= \Re \mathfrak h(z),
\]
so that we have
\begin{equation}
\e^{-\kappa_\n \calU^{\mu}(z)}
= \e^{-\class{\kappa_\n {\bf q}}\cdot \Biglog(z)}\, \Big|
\e^{-\kappa_\n \mathfrak h(z)} \prod_{i=1}^l
(z-z_i)^{-(\kappa_\n q_i-\class{\kappa_\n q_i})}\Big|.
\end{equation}
\end{lem}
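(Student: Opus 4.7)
The plan is to produce $\mathfrak h$ by harmonic conjugation on $\Omega'$ after subtracting off the multivalued pieces $q_i \log|z - z_i|$, and then derive the displayed identity by decomposing $\kappa_\n q_i$ into an integer part and the representative $[\kappa_\n q_i]$.

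By Proposition~\ref{prop:extension1}, $\calU^\mu$ is harmonic on the open set $\Omega' \supset \overline{\Omega}$. I would arrange the extension to be thin enough across $\partial\Omega$ that each hole $H_j$ of $\Omega$ remains a bounded component of $\C \setminus \Omega'$; this is possible since the construction in Proposition~\ref{prop:extension1} is purely local at each boundary point. Then $H_1(\Omega')$ is generated by $l$ loops $\gamma_j \subset \Omega$, each encircling $H_j$ positively once. For such $\gamma_j$, bounding a planar region $D_j$ with $D_j \cap \supp \mu = \overline{H_j}$, the distributional version of Green's identity combined with $\Delta U^\mu = 2\pi \mu$ yields
\[
\oint_{\gamma_j} *dU^\mu = \int_{D_j} \Delta U^\mu \, \diffA = 2\pi q_j,
\]
and since $U^\mu = \calU^\mu$ on $\Omega \supset \gamma_j$, the same period is attained by $\calU^\mu$. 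On the other hand, $q_i \log|z - z_i|$ contributes $2\pi q_i$ to the period around $\gamma_i$ and $0$ around the other generators. Therefore $f := \calU^\mu - \mathbf q \cdot \Biglog$ is a harmonic function on $\Omega'$ with vanishing periods around every generator of $H_1(\Omega')$, so it admits a single-valued harmonic conjugate. Let $\mathfrak h$ be any holomorphic function on $\Omega'$ with $\Re \mathfrak h = f$.

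For the displayed identity, set $m_i := \kappa_\n q_i - [\kappa_\n q_i] \in \Z$, so that $\kappa_\n q_i = [\kappa_\n q_i] + m_i$. Using $\calU^\mu = \mathbf q \cdot \Biglog + \Re \mathfrak h$, a direct computation yields
\begin{align*}
\e^{-\kappa_\n \calU^\mu(z)}
&= \e^{-\kappa_\n \Re \mathfrak h(z)} \prod_{i=1}^{l} |z-z_i|^{-\kappa_\n q_i} \\
&= \e^{-[\kappa_\n \mathbf q]\cdot\Biglog(z)} \, \bigl|\e^{-\kappa_\n \mathfrak h(z)}\bigr| \prod_{i=1}^{l} \bigl|(z-z_i)^{-m_i}\bigr|,
\end{align*}
where the last step uses $m_i \in \Z$, so that the integer power $(z-z_i)^{-m_i}$ is holomorphic and has modulus $|z-z_i|^{-m_i}$. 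Combining all the factors under a single absolute value gives the stated formula.

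The only substantive ingredient is the period computation; once it is in hand, the remainder is bookkeeping. I anticipate that the main point to pin down carefully is the topological assertion that $\Omega'$ may be chosen so the inclusion $\Omega \hookrightarrow \Omega'$ induces an isomorphism on $H_1$, which should follow from the purely local, glued construction in the proof of Proposition~\ref{prop:extension1}.
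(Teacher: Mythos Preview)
Your proof is correct and follows essentially the same route as the paper, which simply invokes the logarithmic conjugation theorem of Axler \cite[p.~248]{Axler}. Your version supplies the period computation $\oint_{\gamma_j} *d\calU^\mu = 2\pi q_j$ that identifies the logarithmic coefficients as the hole masses $q_j$, a detail the paper leaves implicit in its one-line citation.
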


We introduce the \emph{transformed correlation kernel}
\begin{equation}
\label{eq:transformed-kernel}
\modkerK_\n(z,w)=
\e^{-\imag \kappa_\n\Im(\mathfrak h(z))}
\kerK_\n(z,w)
\e^{\imag \kappa_\n\Im(\mathfrak h(w))},\qquad (z,w)\in\Omega'.
\end{equation}
In the appendix we recall some basic properties of determinantal processes,
including the fact \eqref{eq:ChangeOfK}
that
if $c:\Omega\to R$ is a continuous function, then
the kernels $\kerK_N(z,w)$ and
\[
\modkerK_\n(z,w):=\e^{\imag c(z)}\kerK_N(z,w)
\e^{-\imag c(w)},
\]
determine the same determinantal point process.
Hence, the restrictions of the kernels $\modkerK_\n$ and
$\kerK_{\n}$ to $\Omega \times \Omega$
 induce the same process.

We obtain a holomorphic kernel after division
by $(\omega_{\class{\kappa_\n {\bf q}}}(z)
\omega_{\class{\kappa_\n {\bf q}}}(w))^{1/2}$, which
is locally uniformly bounded in $\n$. So, we put
\begin{equation}
\label{eq:holKer}
\holkerK_{\n}(z,w)=
\modkerK_\n(z,w)\omega_{\class{\kappa_\n{\bf q}}}^{-\frac12}(z)
\omega_{\class{\kappa_\n{\bf q}}}^{-\frac12}(w).
\end{equation}
This allows us to use a normal family argument
to deduce precompactness of the family of transformed kernels
whenever $(\class{\kappa_\n {\bf q}})_{\n\in \N}$ converges.

\section{Convergence of weighted polynomial bases}
\label{s:basis}
\subsection{Construction of basis functions}
Fix an admissible measure $\mu$ and let
$\Omega$ be an $l$-connected uncharged
domain
let $q_1,\dots,q_l$ be the $\mu$-mass of each hole in
$\Omega$ and let $z_1,\dots,z_l$ be fixed points in each of the holes.
We recall from Section~\ref{s:not} that 
the number $\class{\kappa_\n q_j}\in\R/\Z$ 
is represented by the unique real number 
(also denoted $\class{\kappa_\n q_j}$)
such that $\kappa_\n q_j-\class{\kappa_\n q_j}\in\Z$ and 
$\class{\kappa_\n q_j}\in [q-\frac12,q+\frac12)$, 
where $q$ is a representative of the limit.
To simplify matters, we abuse notation 
and write that $(\class{\kappa_\n {\bf q}})_{\n \in \N}$
converges towards ${\bf Q} \in [0,1)^l$,
which may only be true along a subsequence $\mathcal{L}$.

The point process $\Phi_\n$ is the determinantal process
associated with the correlation kernel $\kerK_\n$
described in Section~\ref{s:Bergman}. The outlier process
is given by the restricted kernel $\kerK_\n\vert_{\Omega\times\Omega}$,
which determines the same process as the restriction of the modified
kernel $\modkerK_\n(z,w)$.
To prove the convergence of the point process
$\Psi_\n$ towards the Bergman point process
$\BergK_{\Omega,{\bf Q}}$, due to Proposition~\ref{prop:UCVofKernels}, it is sufficient to
show that the kernels $\modkerK_\n$ converge
uniformly on any compact set of $\Omega \times \Omega$ towards
$\BergK_{\Omega,{\bf Q}}$. This is equivalent to asking that the modified
holomorphic kernels $\holkerK_\n$ defined in \eqref{eq:holKer}
converge locally uniformly
towards the holomorphic Bergman kernel $\Bhker_{\Omega,{\bf Q}}$.
The next normal families lemma takes advantage of the
holomorphicity of $\holkerK_\n$ to obtain precompactness.
Once that is established,
we only need to show that there is only one possible limit point.
\begin{lem}
[Normal families]
\label{lem: normal families}
The sequence $(\holkerK_\n)_{\n \in \N}$ of holomorphic
kernels is precompact in the
compact-open topology.
\end{lem}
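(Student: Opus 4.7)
My plan is to apply Montel's theorem, which will reduce the lemma to a locally uniform bound on $\{\holkerK_\n\}$. The input I would use is the monotonicity Lemma~\ref{lem:monotone}. On the diagonal, the unimodular gauge in \eqref{eq:transformed-kernel} cancels, so for $z \in \Omega$,
\[
\holkerK_\n(z,z) \;=\; \frac{\kerK_\n(z,z)}{\omega_{[\kappa_\n \mathbf q]}(z)} \;\le\; \frac{\BergK_{\Omega, \kappa_\n\mathbf q}(z,z)}{\omega_{[\kappa_\n \mathbf q]}(z)} \;=\; \BergK_{\Omega,[\kappa_\n \mathbf q]}(z,z),
\]
where the last equality reflects that $\BergK_{\Omega,\mathbf Q}(z,z)$ depends on $\mathbf Q$ only modulo $\Z^l$, as discussed in Section~\ref{s:LogHarmonicBergman}. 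By Proposition~\ref{thm: bergman continuity}, $(z,\mathbf Q)\mapsto \BergK_{\Omega,\mathbf Q}(z,z)$ is jointly continuous on $\Omega \times (\R/\Z)^l$, so the convergence $[\kappa_\n\mathbf q]\to \mathbf Q$ (along the chosen subsequence) yields a uniform bound on $\holkerK_\n(z,z)$ on compact subsets of $\Omega$.

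To pass to the off-diagonal, I would invoke the Cauchy--Schwarz inequality for the reproducing kernel $K_\n$: $|K_\n(z,w)|^2 \le K_\n(z,z) K_\n(w,w)$. The unimodular gauge and the symmetric division by $(\omega_{[\kappa_\n\mathbf q]}(z)\omega_{[\kappa_\n\mathbf q]}(w))^{1/2}$ transfer this to
\[
|\holkerK_\n(z,w)|^2 \;\le\; \holkerK_\n(z,z)\,\holkerK_\n(w,w),
\]
giving local uniform boundedness of $\{\holkerK_\n\}$ on compacts of $\Omega\times\Omega$. Combined with the holomorphic structure of $\holkerK_\n$ (holomorphic in $z$ and antiholomorphic in $w$, modulo a single-valued unimodular gauge coming from the integer powers $(z-z_i)^{-n_i}$ in Lemma~\ref{lem: holom extension}, where $n_i := \kappa_\n q_i - [\kappa_\n q_i] \in \Z$), Montel's theorem then yields the required precompactness in the compact-open topology.

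The main obstacle is verifying the holomorphic structure of $\holkerK_\n$: one must carefully check that the gauge remaining after the division by $(\omega_{[\kappa_\n\mathbf q]})^{1/2}$ is a single-valued unimodular function on $\Omega'$ despite the multi-valuedness of the harmonic conjugate of $\calU^\mu$ around the holes of $\Omega$. This is precisely where the integrality of the shifts $n_i$ is essential, since only then does $\prod_i (z-z_i)^{-n_i}/|z-z_i|^{-n_i}$ descend to a single-valued phase. Once the holomorphic structure is confirmed, the diagonal bound via Lemma~\ref{lem:monotone}, its continuous propagation via Proposition~\ref{thm: bergman continuity}, and the Cauchy--Schwarz reduction are all routine.
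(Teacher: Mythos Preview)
Your proposal is correct and follows essentially the same route as the paper: reduce to a diagonal bound via Cauchy--Schwarz, invoke Lemma~\ref{lem:monotone} for $\kerK_\n(z,z)\le \BergK_{\Omega,[\kappa_\n\mathbf q]}(z,z)$, use Proposition~\ref{thm: bergman continuity} for uniformity, and conclude by Montel. The only cosmetic difference is that the paper bounds by $\sup_{(z,\mathbf Q)\in K\times(\R/\Z)^l}\BergK_{\Omega,\mathbf Q}(z,z)$ using compactness of the full torus rather than the convergence $[\kappa_\n\mathbf q]\to\mathbf Q$ along the subsequence; this makes the lemma hold for the whole sequence, not just the subsequence, but is otherwise the same idea. (A small slip: your last equality $\BergK_{\Omega,\kappa_\n\mathbf q}(z,z)/\omega_{[\kappa_\n\mathbf q]}(z)=\BergK_{\Omega,[\kappa_\n\mathbf q]}(z,z)$ is off by the weight factor, since $\BergK$ already denotes the \emph{correlation} kernel; the division actually produces the holomorphic kernel $\Bhker_{\Omega,[\kappa_\n\mathbf q]}(z,z)$, but this is harmless as $\omega_{[\kappa_\n\mathbf q]}$ is locally uniformly bounded above and below.)
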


\begin{proof}

The kernels in the sequence  $(\holkerK_\n)_{\n \in \N}$ 
are analytic in $(z,\bar w)$. 
By Montel's theorem for holomorphic functions in several variables 
(see e.g\ \cite[Ch.~1, Proposition~6]{Narasimhan}), it is 
enough to show that $(\holkerK_\n)_{\n \in \N}$ 
is uniformly bounded on any compact 
subset of $\Omega \times \Omega$. By the reproducing kernel property and 
the Cauchy-Schwarz inequality, for any $(z,w) \in \Omega \times \Omega$,
\[
|\holkerK_\n(z,w)|^2 \leq \holkerK_\n(z,z) \holkerK_\n(w,w). 
\]
So, it is sufficient to prove that the 
kernels are uniformly bounded on the diagonal. 
Combining Lemma~\ref{lem:monotone} and Proposition~\ref{thm: bergman continuity} 
gives, on any compact set $K$
\[
\holkerK_\n(z,z) \leq \BergK_{\Omega,\class{\kappa_\n {\bf q}}}(z,z) 
\leq \sup_{(z,\mathbf Q)\in 
(\mathbb R/\mathbb Z)^l \times K} \BergK_{\Omega,\mathbf Q}(z,z) < \infty.
\]
\end{proof}

To obtain the main result, it is enough to show that
if $\class{\kappa_\n {\bf q}}$ converges to ${\bf Q}$, 
then we have convergence of holomorphic kernels along the diagonals
\[
\forall z \in \Omega , \quad \lim_{\n \to \infty}
\holkerK_{\n}(z,z) =
\Bhker_{\Omega, {\bf Q}}(z,z).
\]
Indeed, if $\holkerK$ is the limit of $\holkerK_\n$:
\[
\holkerK(z,w)=\lim_{\n\to\infty}\holkerK_\n(z,w).
\]
then $\holkerK$ coincides with the kernel
$\Bhker_{\Omega, {\bf Q}}$ on the diagonal of $\Omega \times \Omega$,
and the next lemma shows that this is enough to complete the proof.
\begin{lem}[Lemma 2.5.1 from \cite{HoughKrisPeresVirag}, p.30]
\label{lem:Herimitan}
Let $L(z,w)$ be analytic in $z$ and anti-analytic in $w$
(i.e. analytic in $\bar{w}$) for $(z,w) \in \Omega \times \Omega$.
If $L(z,z)=0$ $ \forall z \in \Omega$,
then $L(z,w)=0$ $\forall z,w \in \Omega$.
\end{lem}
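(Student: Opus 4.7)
The plan is to use a polarization argument. Since $L$ is holomorphic in its first argument and antiholomorphic in its second, the natural move is to replace the antiholomorphic dependence with a holomorphic one by setting
\[
M(z,w):=L(z,\bar w),\qquad (z,w)\in \Omega\times \Omega^{*},
\]
where $\Omega^{*}=\{\bar w:w\in \Omega\}$. Then $M$ is holomorphic in both variables jointly on $\Omega\times \Omega^{*}$, and the hypothesis $L(z,z)=0$ becomes the constraint $M(z,\bar z)=0$ for every $z\in\Omega$, i.e., $M$ vanishes along the totally real submanifold $\{(z,\bar z):z\in\Omega\}$ of $\Omega\times\Omega^{*}$.

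Next, I would fix a point $z_{0}\in\Omega$ and expand $M$ as a joint power series in a polydisk around $(z_{0},\bar z_{0})$:
\[
M(z,w)=\sum_{m,n\ge 0} c_{m,n}\,(z-z_{0})^{m}(w-\bar z_{0})^{n}.
\]
Plugging in the constraint $w=\bar z$ and writing $u=z-z_{0}$, one obtains the real-analytic identity
\[
\sum_{m,n\ge 0} c_{m,n}\,u^{m}\bar u^{n}=0
\]
in a neighbourhood of $u=0$. Because the monomials $u^{m}\bar u^{n}$ are linearly independent as real-analytic functions of $(\Re u,\Im u)$ (they form the homogeneous basis in bidegree $(m,n)$), every coefficient $c_{m,n}$ must vanish. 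Therefore $M\equiv 0$ on a neighbourhood of $(z_{0},\bar z_{0})$.

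Finally, I would appeal to the identity principle for joint holomorphic functions: since $M$ vanishes on an open subset of each connected component of $\Omega\times\Omega^{*}$ (by choosing $z_{0}$ in every component), $M\equiv 0$, which translates back to $L\equiv 0$ on $\Omega\times\Omega$. There is no substantial obstacle here: the only delicate point is the linear independence of the $u^{m}\bar u^{n}$, which follows at once by separating bidegrees in the convergent expansion $u^{m}\bar u^{n}=(x+iy)^{m}(x-iy)^{n}$, or by observing that $\partial^{m}\bar\partial^{n}(u^{p}\bar u^{q})\big|_{0}=m!\,n!\,\delta_{m,p}\delta_{n,q}$ via the Wirtinger derivatives.
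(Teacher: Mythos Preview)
Your argument is correct and is precisely the standard polarization proof of this identity principle for sesqui-holomorphic functions. The paper does not supply its own proof of this lemma; it merely quotes the statement from \cite{HoughKrisPeresVirag}, where essentially the same power-series argument you give appears. One minor remark: to assert that $M(z,w)=L(z,\bar w)$ is \emph{jointly} holomorphic on $\Omega\times\Omega^{*}$ you are implicitly invoking Hartogs' theorem (separate holomorphy implies joint holomorphy), which is fine here; alternatively one can bypass this by expanding $L$ directly as a convergent series $\sum c_{m,n}(z-z_0)^m(\bar w-\bar z_0)^n$ and running the same coefficient argument. Also, when you invoke the identity theorem at the end, note that $\Omega\times\Omega^{*}$ is connected whenever $\Omega$ is, so a single choice of $z_0$ suffices (your phrasing ``in every component'' is harmless but unnecessary in the connected case relevant to the paper).
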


In the remaining part of the proof, we show the 
pointwise convergence of the diagonal restrictions of
the holomorphic kernels $\holkerK_\n$ towards $\Bhker_{\Omega,\bf Q}$.
In the next lemma, we construct a sequence of
orthonormal bases in which we will express
the weighted Bergman kernels $\BergK_{\Omega, \class{\kappa_\n {\bf q}}}$.

\begin{prop}\label{prop:good-basis}
There exists an orthonormal basis 
$(\psi_k)_k$ of $A^2(\Omega, \e^{-2{\bf Q}\cdot\Biglog(z)})$, such that 
moreover $\psi_k\in A^2(\Omega, \e^{-2{\mathbf{Q}_\n}\cdot\Biglog(z)})$.
If $(\psi_{k,\n})_{k \in \N}$ is the orthonormal set obtained
from $(\psi_k)_k$ by 
the Gram-Schmidt procedure in
$A^2(\Omega, \e^{-2\mathbf{Q}_\n\cdot\Biglog(z)})$, we have
\begin{enumerate}[label=(\roman*)]
\item For any $z \in \Omega$, $\psi_{k,\n}(z)
\xrightarrow[\n \to \infty]{} \psi_k(z)$;
\item All the functions
$\psi_{k,\n}$ and $\psi_k$ for $k\le k_0$ extend holomorphically to 
some domain $\Omega'_{k_0}\supset\overline{\Omega}$.
\item The functions $\psi_{k,\n}$ are locally uniformly bounded in $\n$.
\end{enumerate}
\end{prop}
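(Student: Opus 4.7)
The plan is to first exhibit a dense subspace of $A^2(\Omega,\omega_{{\bf Q}})$ whose members extend holomorphically past $\partial\Omega$, produce $(\psi_k)$ from this subspace by Gram--Schmidt, and then deduce (i)--(iii) by combining dominated convergence of the weighted inner products with the continuity of the Gram--Schmidt procedure.

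For the construction of $(\psi_k)$, I would choose an open set $\Omega'\supset\overline\Omega$ avoiding $\{z_1,\dots,z_l\}$, which is possible because $\partial\Omega$ is a finite disjoint union of analytic Jordan curves and the $z_i$ lie in the interiors of the holes of $\Omega$. By standard planar approximation (Mergelyan's theorem applied componentwise for bounded $\Omega$, or Runge-type approximation by rational functions with poles only in $\widehat{\mathbb C}\setminus\overline\Omega$ when $\Omega$ is unbounded), the subspace
\[
\mathcal S := \mathcal O(\overline{\Omega'})\cap A^2(\Omega,\omega_{{\bf Q}})
\]
is dense in $A^2(\Omega,\omega_{{\bf Q}})$. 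Applying Gram--Schmidt in $A^2(\Omega,\omega_{{\bf Q}})$ to a countable dense sequence from $\mathcal S$ yields the orthonormal basis $(\psi_k)$, each element of which extends holomorphically to $\Omega'$.

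Next I would compare inner products across the different spaces. Since $z_1,\dots,z_l\notin\overline{\Omega'}$ and $\mathbf Q_\n\to{\bf Q}$ in the real-valued sense recalled in Section~\ref{s:not}, the weights $\omega_{\mathbf Q_\n}(z)=\prod_i|z-z_i|^{-2[\kappa_\n q_i]}$ converge to $\omega_{{\bf Q}}(z)$ uniformly on $\overline{\Omega'}$ and are uniformly comparable to $\omega_{{\bf Q}}$ there for all large $\n$. Each $\psi_k$ is bounded on $\overline{\Omega'}$ and inherits whatever decay at infinity is forced by its $A^2(\Omega,\omega_{{\bf Q}})$-membership, so $\psi_k\in A^2(\Omega,\omega_{\mathbf Q_\n})$ for large $\n$, and dominated convergence gives
\[
\langle \psi_j,\psi_k\rangle_{\omega_{\mathbf Q_\n}}\longrightarrow\langle \psi_j,\psi_k\rangle_{\omega_{{\bf Q}}}=\delta_{jk}.
\]

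To conclude I would invoke continuity of Gram--Schmidt. In $A^2(\Omega,\omega_{\mathbf Q_\n})$ it produces $\psi_{k,\n}=\sum_{j=1}^k c_{kj}^{(\n)}\psi_j$ with coefficients $c_{kj}^{(\n)}$ depending continuously on the truncated Gram matrix; since the latter converges to the identity, $c_{kj}^{(\n)}\to\delta_{kj}$, yielding (i) (in fact with locally uniform convergence on $\Omega'$). Property (ii) is immediate because for $k\le k_0$ each $\psi_{k,\n}$ is a linear combination of $\psi_1,\dots,\psi_{k_0}$, all of which extend holomorphically to $\Omega'$; property (iii) follows from the uniform boundedness of the $c_{kj}^{(\n)}$ together with the local boundedness of $\psi_1,\dots,\psi_{k_0}$ on $\Omega'$. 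The main obstacle is the density of $\mathcal S$ in the first step: for bounded $\Omega$ with analytic boundary this is classical, but for unbounded $\Omega$ one must simultaneously respect integrability of $\omega_{{\bf Q}}$ at infinity, which is why the approximation must be carried out by rational functions whose poles are confined to $\widehat{\mathbb C}\setminus\overline\Omega$.
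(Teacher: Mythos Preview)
Your outline is essentially the paper's approach: exhibit a dense family of functions extending holomorphically past $\partial\Omega$, form $(\psi_k)$ by Gram--Schmidt, and then deduce (i)--(iii) from the convergence of the truncated Gram matrices $G_\n\to I$ together with the continuity of Gram--Schmidt. The paper makes the specific choice of rational functions with poles only at $z_1,\dots,z_l$ (its Lemma~\ref{lem:density}), but the logic is the same.

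There is, however, a genuine gap when $\Omega$ is unbounded. Your claim that $\omega_{\mathbf Q_\n}$ converges uniformly to $\omega_{\mathbf Q}$ on $\overline{\Omega'}$ and is uniformly comparable to it there is false: the ratio $\omega_{\mathbf Q_\n}/\omega_{\mathbf Q}=\prod_i|z-z_i|^{2(Q_i-[\kappa_\n q_i])}$ behaves like a nonzero power of $|z|$ at infinity for each fixed $\n$, and your $\psi_k$ need not be bounded on an unbounded $\overline{\Omega'}$. Consequently neither the inclusion $\psi_k\in A^2(\Omega,\omega_{\mathbf Q_\n})$ nor the dominated convergence of $\langle\psi_j,\psi_k\rangle_{\omega_{\mathbf Q_\n}}$ follows from what you wrote; the phrase ``inherits whatever decay at infinity is forced'' does not close the argument. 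The paper supplies the missing ingredient: a removable-singularity argument yields a \emph{fixed} $\varepsilon>0$ such that every $f\in A^2(\Omega,\omega_{\mathbf Q})$ satisfies $|f(z)|^2\omega_{\mathbf Q}(z)\lesssim |z|^{-2-\varepsilon}$ near infinity (see \eqref{eq:holom-infinity}). Once $\|\mathbf Q-\mathbf Q_\n\|_1<\varepsilon$, this gives both the space inclusion and a single integrable majorant for all the integrands, after which your Gram-matrix argument goes through unchanged. A secondary point: Mergelyan and Runge provide uniform approximation of functions already holomorphic near $\overline\Omega$, not $L^2$-density in $A^2(\Omega,\omega_{\mathbf Q})$; the paper treats this step separately via a Cauchy-integral decomposition in Lemma~\ref{lem:density}.
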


\begin{rem}
When working with a fixed $k_0$, we replace 
$\Omega'$ with $\Omega'\cap\Omega'_{k_0}$ to simplify notation.
\end{rem}

\begin{rem}
In the above construction, we only claim to obtain an orthonormal set
$(\psi_{k,\n})_k\subset A^2(\Omega, \e^{-2{\mathbf{Q}_\n}\cdot\Biglog(z)})$,
as opposed to a full orthonormal basis. 
Indeed, the space $A^2(\Omega, \e^{-2{\mathbf{Q}_\n}\cdot\Biglog(z)})$
can be larger in general. However, this is all we need for our proof.
\end{rem}

We start a lemma which ensures that there exists an orthonormal
basis $(\phi_k)_{k \in \N}$ of $A^2(\Omega)$ 
consisting of certain rational functions.
This is essentially an $L^2$-version of the 
Mergelyan theorem in a smooth setting, 
of which related versions have appeared previously 
(see e.g. Biard, Fornæss and Wu 
\cite{Fornaess} and the references therein).

\begin{lem}
\label{lem:density}
Let $\Omega$ be an $l$-connected planar domain  
with analytic boundary and let $z_1,\dots,z_l$ be fixed
points in each hole of $\Omega$. Then rational functions with poles at
$z_1, \dots,z_l$ form a dense subspace of 
$A^2(\Omega,\e^{-2{\bf Q}\cdot \Biglog(z)})$.
\end{lem}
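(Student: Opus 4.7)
The plan is to prove the density in two stages: first, show that functions in $A^{2}(\Omega,\omega_{\mathbf{Q}})$ that extend holomorphically past $\partial\Omega$ form a dense subspace; second, apply a Runge-type theorem to approximate such regular functions by rational functions with the prescribed pole structure. A preliminary simplification is that each $z_{i}$ lies in the interior of a hole and is therefore at positive distance from $\overline{\Omega}$, so the weight $\omega_{\mathbf{Q}}(z)=\prod_{i}|z-z_{i}|^{-2Q_{i}}$ is continuous, strictly positive, and bounded above and below on compact subsets of $\overline{\Omega}$. Consequently, uniform convergence on compacts of $\overline{\Omega}$ will yield convergence in $A^{2}(\Omega,\omega_{\mathbf{Q}})$, once the behaviour near infinity is controlled in the unbounded case via the decay $\omega_{\mathbf{Q}}(z)\asymp|z|^{-2\sum Q_{i}}$.

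For the first stage, I would exploit that $\partial\Omega$ consists of finitely many real-analytic Jordan curves. The cleanest route is via the reproducing kernel $K=K_{\omega_{\mathbf{Q}},\Omega}$: finite linear combinations of its sections $K(\cdot,w)$, $w\in\Omega$, are dense in $A^{2}(\Omega,\omega_{\mathbf{Q}})$ by the reproducing property, and each section extends holomorphically past $\partial\Omega$ — a classical consequence of boundary analyticity, either via Schwarz reflection or by writing the kernel in conformal coordinates where $\partial\Omega$ becomes unit circles, under which $K$ has an explicit meromorphic continuation. An alternative and more self-contained construction is to take a smooth map that pushes each boundary collar slightly inward, correct its $\bar\partial$ to zero using H\"ormander's $L^{2}$-estimates, and pull $f$ back by the resulting contraction; this is in the spirit of the $\bar\partial$-surgery methods already employed elsewhere in the paper.

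For the second stage, given $g$ holomorphic on some open $U\supset\overline{\Omega}$, one shrinks $U$ so that each bounded component of $\widehat{\C}\setminus U$ contains exactly one of the $z_{i}$. Classical Runge approximation then produces rational functions converging to $g$ uniformly on $\overline{\Omega}$ with finite poles only at $z_{1},\ldots,z_{l}$ — with, when $\Omega$ is bounded, a polynomial summand corresponding to the pole placed in the unbounded component of $\widehat{\C}\setminus U$. Uniform convergence on $\overline{\Omega}$ combined with the boundedness of $\omega_{\mathbf{Q}}$ gives the desired $A^{2}$-convergence; for unbounded $\Omega$, a truncation at infinity exploiting the polynomial decay of $\omega_{\mathbf{Q}}$ ensures the approximants lie in the space.

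The main obstacle, as is typical for this kind of statement, is the first stage: producing an approximation of a general $f\in A^{2}(\Omega,\omega_{\mathbf{Q}})$ by functions extending past $\partial\Omega$. In the simply connected case this is immediate from the analytic continuation of the Riemann map combined with the disk dilation $g_{r}(z)=g(rz)$. In the $l$-connected case one either invokes the boundary extension of the kernel as above, or performs the $\bar\partial$-correction sketched; in both routes the analyticity of $\partial\Omega$ is essential, and the weight $\omega_{\mathbf{Q}}$ being log-harmonic and bounded on $\overline{\Omega}$ is what keeps the estimates uniform in the Runge step.
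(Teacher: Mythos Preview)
Your two-stage strategy (extend past $\partial\Omega$, then Runge) is sound and would work, but the paper argues differently. Instead of first producing holomorphic extensions of $f$ across $\partial\Omega$, the paper applies Cauchy's integral formula along slightly indented copies of the boundary components to write $f=\sum_{j} g_j$, where each $g_j$ is holomorphic on the \emph{complement of a single hole} $\Omega_j=\C\setminus\mathcal T_j$ (or on the filled domain for the outer contour). Each such $\Omega_j$ is simply connected, so the approximation problem reduces to the one-hole case: a conformal map $\phi_j:\Omega_j\to\D$ with $\phi_j(\infty)=0$ turns it into the classical statement that polynomials are dense in $A^2(\D)$, and a final application of Runge to $\phi_j$ itself converts the approximants $\phi_j'\phi_j^k$ into rationals with poles only at $z_j$.

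The trade-off: the paper's Cauchy decomposition is entirely elementary and self-contained---no appeal to boundary regularity of the weighted Bergman kernel or to a $\bar\partial$ construction is needed, and the unbounded case is handled by a single inversion rather than a separate truncation argument. Your route is more modular and conceptually clean, but the first stage leans on the nontrivial (though classical) fact that $K_{\omega_{\mathbf Q},\Omega}(\cdot,w)$ extends across a real-analytic boundary, which you would have to justify; your remark about ``explicit meromorphic continuation'' in circular coordinates is optimistic for multiply connected domains, where the kernel has no closed form. Either approach is acceptable, but the paper's is the lighter one here.
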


\begin{proof}[Proof of Lemma~\ref{lem:density}]
Suppose first that $\Omega$ is bounded. 
We denote by $ \mathcal T_1,\dots,\mathcal T_l$
the holes of $\Omega$, i.e.,
the connected components of $\mathbb C \setminus \Omega$. 
We use the notation $\Omega_j
= \mathbb C \setminus \mathcal T_j$
for $j \in \{1,\dots, l\}$
and define also $\Omega_{l+1} = 
\Omega \cup \mathcal T_1 \cup \dots \cup \mathcal T_l$.
The domains $\Omega_1,\ldots,\Omega_{l+1}$ 
are simply connected (on the sphere), 
they have non-empty intersection with $\Omega$, and each
share a single boundary component with $\Omega$ and, 
in fact, $\partial\Omega$ is the disjoint union
$\bigcup_{j=1}^{l+1}\partial\Omega_j$.

Denote by $f$ an arbitrary element of 
$A^2(\Omega,\e^{-2{\bf Q}\cdot \Biglog(z)})$. 
By Cauchy's integral formula,
$f$ can be decomposed as 
\[
f(z)=\sum_{j=1}^{l+1}\frac{1}{2\pi\imag}\int_{C_j}\frac{f(w)}{w-z}\diff w=:
\sum_{j=1}^{l+1} g_j(z).
\]
where for $1\le j\le l+1$, the $C_j$ are small inward
perturbations towards $\Omega$ of the connected
components of $\partial\Omega$, taken with the standard orientation. 
By deforming the contours $C_j$, the functions $g_j$ are seen to be holomorphic
on $\Omega_j$.

We will show that $g_j$ can be approximated by certain rational functions.
We treat the cases $1\le j\le l$ and $j=l+1$ separately.

\subsubsection*{Case 1.}
We fix an arbitrary index $j$ with $1\le j\le l$.
Since for all $k\ne j$, the function $g_k$ is locally bounded in a neighborhood of 
$\partial\Omega_j$, 
it follows that $g_j=f-\sum_{k\ne j}g_k$ lies 
in $A^2(\Omega_j,|z-z_j|^{-2 \alpha})$ where
$\alpha$ is some arbitrary fixed strictly positive number. 
Indeed, near $\partial\Omega_j$, we use the uniform boundedness of $g_k$
for $k\ne j$ along with the fact that the $f$ lies in the 
space $A^2(\Omega,\e^{-2{\bf Q}\cdot \Biglog(z)})$,
and near infinity we use the facts that $|g_j|$ 
decays like $1/|z|$ and that $\alpha$ is
strictly positive.

There exists a conformal mapping $\phi_j:\Omega_j\to \D$ 
which extends holomorphically past the boundary, 
where we choose the normalization $\phi_j(\infty)=0$.
Below we will prove that $g_j$ can be arbitrarily well approximated in 
$A^2(\Omega_j,|z-z_j|^{-2\alpha})$ 
by functions of the form $\phi_j'\phi_j^k$ for appropriate powers $k\in\Z$. 
But then by Runge's theorem, since $\phi_j$ is holomorphic in
a neighborhood of $\Omega_j$, the function $\phi_j$ can be approximated in the 
uniform norm on a neighborhood of $\overline\Omega_j$ by rational 
functions with poles at $z_j$.
Finally, since $\Omega$ is bounded, the same 
statement holds in the norm in the space
$A^2(\Omega,\e^{-2{\bf Q}\cdot \Biglog(z)})$. 

It remains to show that $\{\phi'\phi_j^k:k\ge k_0\}$ spans the space 
$A^2(\Omega_j,|z-z_j|^{-2\alpha})$. 
By a change of variables with the conformal map, 
this follows from the fact that polynomials are dense in $A^2(\D,\omega)$ 
for weights which are smooth and positive in a 
neighborhood of $\partial\D$, which 
is equivalent to the classical statement that
polynomials are dense in $A^2(\D)$ (see e.g. the book \cite{HKZ}
by Hedenmalm-Korenblum-Zhu).

\subsubsection*{Case 2.} This case is similar, 
but we approximate $g$ by polynomials.

\bigskip

It only remains to treat the case when $\Omega$ is unbounded.
However, essentially the same proof works in that setting. 
The only difference is that
there is no curve $C_{l+1}$ around the point at infinity, and hence no function 
$g_{l+1}$ to approximate.
\end{proof}

Below, we need means to control 
basis elements of $A^2(\Omega,\rho)$ at infinity, in the case
when $\Omega$ is unbounded and $\rho$
is a positive weight with $\rho(z)\asymp |z|^\alpha$ as $z\to\infty$,
for some $\alpha\in\R$.
We use the following well-known fact on removable singularities.

\bigskip

\noindent \emph{Every $g:\D \setminus\{0\}
\to \mathbb C$ holomorphic such that
$\int_{\mathbb D}|g(z)|^2 \mathrm d m(z)< \infty$
admits a holomorphic extension to $\mathbb D$.}

\bigskip

In particular, for any $\gamma \in \mathbb R$,
there exists $\varepsilon>0$ such that
if $\int_{\mathbb D}|z|^\gamma |g(z)|^2
\mathrm d m(z) < \infty$ then
$|z|^\gamma |g(z)|^2 \leq C/|z|^{2-\varepsilon}$
for some constant $C>0$ and near $z = 0$.
Indeed, we may write
$\gamma = 2k + \xi$ with $\xi \in (-2,0]$ and
$k \in \mathbb Z$ so that
$\int_{\mathbb D} |z^k g(z)|^2
\leq 
\int_{\mathbb D} |z|^\gamma |g(z)|^2 < \infty$.
The above claim on removability of singularities 
implies that
$|z^k g(z)|^2$ is bounded around zero
which tells us that
\[|z|^\gamma |g(z)|^2 =
|z|^\xi |z^k g(z)|^2
\leq 
C |z|^\xi
=\frac{C}{|z|^{2-\varepsilon}}
\]
for some constants $C,\varepsilon >0$ and near $z = 0$.

From the previous argument, 
for a positive weight $\rho$ 
satisfying $\rho(z)\asymp |z|^\alpha$ as $z\to\infty$,
there exists $\varepsilon=\epsilon(\alpha) >0$ such that
if $f\in A^2(\Omega,\rho)$  then
\begin{equation}
\label{eq:holom-infinity}
|f(z)|^2\rho(z)\lesssim |z|^{-2-\varepsilon}.
\end{equation}
Indeed, for $g(z) =f(1/z)$ we would have
$\int |g(z)|^2 |z|^{-\alpha-4} \mathrm d m(z)<\infty$
which would imply that
$|g(z)|^2 |z|^{-\alpha-4} \leq C/|z|^{2+\varepsilon}$.
By going back to $f$,
this is precisely \eqref{eq:holom-infinity}.

\begin{proof}[Proof of Proposition~\ref{prop:good-basis}]
Let ${\bf Q}_\n$ be the sequence of elements of 
$(\R /\Z)^l$ which converge towards
${\bf Q} \in [0,1)^l$. 
We first notice that for $\n$ large enough
\[
A^2(\Omega,\e^{-2{\bf Q}\cdot \Biglog(z)})
\subset A^2(\Omega,\e^{-2{\bf Q}_\n\cdot \Biglog(z)}).
\]
Indeed, if $\Omega$ is bounded, this is simply the fact that $\Biglog(z)$
is bounded on $\Omega$. If 
instead $\Omega$ contains a neighborhood of infinity,
\eqref{eq:holom-infinity}
implies that for any $f\in A^2(\Omega,\e^{-2{\bf Q}\cdot \Biglog(z)})$,
\[
|f(z)|^2\e^{-2{\bf Q}
\cdot \Biglog(z)}\le C|z|^{-2-\varepsilon},
\]
for a fixed $\epsilon$,
so that since $\|{\bf Q} - {\bf Q}_\n\|_1 < \varepsilon$ for $\n$ large enough,
$f$ belongs to the space $A^2(\Omega,\e^{-2{\bf Q}_\n\cdot \Biglog(z)})$ as well.

Denote by $(\psi_k)_{k \in \N}$ an orthonormal basis of 
$A^2(\Omega,\e^{-2{\bf Q} \cdot \Biglog z})$
consisting of rational functions with poles at the $z_j$, 
the existence of which is
guaranteed by Lemma~\ref{lem:density}.
We define $(\psi_{k,\n})_{k \in \N}$ as the orthonormal set
obtained from the Gram-Schmidt process applied to  $(\psi_k)_{k \in \N}$
in the topology of $L^2(\Omega,\e^{-2{\bf Q}_\n \cdot \Biglog (z)})$.
All these functions are defined on $\C \setminus \{z_1, \dots, z_l\}$.

It only remains to establish the convergence
\[
\psi_{k,\n}\to \psi_k
\]
as $\n\to \infty$ and the locally uniform boundedness in $\n$ of $\psi_{k,\n}$. 
But this would be immediate if we can show that the Gram
matrix $G_\n=(\langle \psi_k,\psi_m
\rangle_{L^2(\Omega,\omega_{{\bf Q}_\n})})_{1\le j,k\le k_0}$
satisfies $G_\n=I+\ordo(1)$ as $\n\to\infty$, using that
the functions $\psi_k$ are locally uniformly bounded for the last point. 
But the statement about the Gram matrices, in turn, follows from the bound 
\eqref{eq:holom-infinity}, the convergence ${\bf Q}_\n\to{\bf Q}$ and the 
dominated convergence theorem. 
The proof is complete.
\end{proof}

\subsection{Quasipolynomials and \texorpdfstring{$\bar\partial$}{asd}-surgery}
Let $\psi_{k,\n}$
be the sequence
from Proposition~\ref{prop:good-basis}
for ${\bf Q}_\n = \class{\kappa_\n {\bf q}}$. 
We use it to construct an orthonormal basis 
$\big(P_{k,\n}\big)_{0\le k \leq \n-1}$
of the polynomial Bergman space 
$A^2_\n(\Omega,\e^{-2\kappa_\n U^\mu})$ which satisfy, for any fixed $k$,
\[
\forall z \in \Omega, \quad \lim_{\n\to\infty}|P_{k,\n}(z)|
\e^{-\kappa_\n U^{\mu}(z)} =
|\psi_k(z)| \e^{-{\bf Q} \cdot \Biglog (z)}.
\]
Denote by $\chi$ a smooth cut-off function which vanishes
in a neighborhood of $\C\setminus\overline{\Omega'}$, and
is identically one in a neighborhood of $\overline\Omega$.
See Figure~\ref{fig:cut-off}.

\begin{defn}
[Approximately orthogonal holomorphic functions]
For any $\n,k \in \N$ we define
the function
$F_{k,\n}:\C \to \C$ by
\begin{equation}\label{approx}
F_{k,\n}(z) = \chi(z) \psi_{k,\n}(z)
\e^{\kappa_\n \mathfrak h(z)} \prod_{i=1}^l
(z-z_i)^{\kappa_\n q_i-\class{\kappa_\n q_i}}.
\end{equation}
\end{defn}

Notice that
$
|F_{k,\n}(z)|
=
|\psi_{k,\n}(z)|\, \e^{\kappa_\n U^\mu(z)}
\e^{-\mathbf Q \cdot \Biglog(z)}
$ for $z \in \Omega$.
The following lemma shows that the $F_{k,\n}$ are
approximately orthogonal in the space $L^2(\C,\e^{-2\kappa_\n U^{\mu}})$.
In addition, they are close to polynomials in $\C_{\n-1}[z]$ 
in the suitable $L^2$-sense
(see Proposition~\ref{prop:hormander} below).
In the case when $\Omega$ is unbounded, they 
behave like elements of $\C_{\n-1}[z]$ at infinity, which is certainly
necessary for accurate polynomial approximation to be possible.

\begin{lem}
For any $\n,k \in \N$ , the function $F_{k,\n}$
belongs to $L^2(\C,\e^{-2\kappa_\n U^{\mu}})$. Moreover,
for any fixed $k,m \in \mathbb N$, it holds that
\[
\langle F_{k,\n},F_{m,\n}\rangle_\n
= \int_{\C} F_{k,\n}(z) \bar F_{m,\n}(z)
\e^{-2\kappa_\n U^{\mu}(z)}
\diffA(z)
\xrightarrow[\n \to \infty]{} \delta_{k,m}.
\]
\end{lem}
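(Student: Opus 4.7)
The plan is to exploit Lemma~\ref{lem: holom extension} to extract an exact cancellation between the non-polynomial factors in $F_{k,\n}$ and the weight $e^{-2\kappa_\n U^\mu}$, producing \emph{exact} orthonormality on $\Omega$ and exponential decay outside. The key algebraic step uses $e^{2\kappa_\n \Re \mathfrak{h}} = e^{2\kappa_\n \mathcal{U}^\mu} \prod_i |z-z_i|^{-2\kappa_\n q_i}$ to give, after collecting the moduli of the power factors,
\[
F_{k,\n}(z)\,\overline{F_{m,\n}(z)}\, e^{-2\kappa_\n U^\mu(z)} = \chi(z)^2\, \psi_{k,\n}(z)\,\overline{\psi_{m,\n}(z)}\, \omega_{\mathbf{Q}_\n}(z)\, e^{-2\kappa_\n (U^\mu - \mathcal{U}^\mu)(z)},
\]
with $\mathbf Q_\n = [\kappa_\n \mathbf q]$. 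The factors $(z-z_i)^{\kappa_\n q_i - [\kappa_\n q_i]}$ are single-valued polynomials, since by the convention of Section~\ref{s:not} the exponents are integers. After possibly shrinking $\Omega'$ so that the points $z_i$ lie outside $\overline{\Omega'}$, the function $F_{k,\n}$ is smooth on $\C$.

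I would then split the integral along $\partial \Omega$. On $\Omega$, where $\chi \equiv 1$ and $U^\mu = \mathcal{U}^\mu$, the integrand reduces to $\psi_{k,\n}\overline{\psi_{m,\n}}\,\omega_{\mathbf{Q}_\n}$, integrating to exactly $\delta_{k,m}$ by orthonormality of $(\psi_{k,\n})$ in $A^2(\Omega, \omega_{\mathbf{Q}_\n})$ from Proposition~\ref{prop:good-basis}. On $\Omega' \setminus \overline{\Omega}$, Proposition~\ref{prop:extension2} yields $(U^\mu - \mathcal{U}^\mu)(z_0 + t\mathrm{n}_{z_0}) = c_0(z_0) t + c_1(z_0) t^2 + \Ordo(t^3)$ with at least one of $c_0(z_0), c_1(z_0)$ strictly positive. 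By continuity of these coefficients along the compact boundary (coming from admissibility), there is a uniform bound
\[
(U^\mu - \mathcal{U}^\mu)(z) \ge c\, \operatorname{dist}(z, \partial\Omega)^2
\]
in some one-sided tubular neighbourhood of $\partial\Omega$ inside $\Omega' \setminus \overline{\Omega}$. Choosing the cut-off $\chi$ to be supported in this tube, Proposition~\ref{prop:good-basis}(iii) gives a uniform bound on $|\psi_{k,\n}|$, and $\omega_{\mathbf{Q}_\n}$ is likewise bounded on $\operatorname{supp}\chi$ (which avoids the $z_i$), so the exterior piece is controlled by
\[
C \int_{\text{tube}} e^{-2c\kappa_\n\operatorname{dist}(z,\partial\Omega)^2}\, \diffA(z) = \Ordo(\kappa_\n^{-\frac12}),
\]
the estimate coming from integrating a Gaussian in the normal direction against the arc length along $\partial\Omega$. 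Combined with the $\Omega$-piece, this yields both $F_{k,\n} \in L^2(\mathbb C, e^{-2\kappa_\n U^\mu})$ and $\langle F_{k,\n}, F_{m,\n}\rangle_\n \to \delta_{k,m}$.

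The main obstacle is producing the uniform quadratic lower bound on $U^\mu - \mathcal{U}^\mu$. In general $c_0$ may vanish where $\rho$ has zeros on $\partial D$, and $c_1$ may vanish at other points, but by admissibility they do not vanish simultaneously; one therefore relies on the uniform quadratic rate globally even though at some boundary points the true rate is linear. This is harmless for the argument since a quadratic exponential still integrates to a vanishing quantity in the normal direction. The rest of the verification is then routine dominated convergence on the transition region where $\nabla \chi \neq 0$, which sits strictly inside the regime of exponential decay.
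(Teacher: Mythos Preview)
Your proof is correct and follows essentially the same route as the paper: derive the identity $F_{k,\n}\overline{F_{m,\n}}\,e^{-2\kappa_\n U^\mu}=\chi^2\psi_{k,\n}\overline{\psi_{m,\n}}\,\omega_{[\kappa_\n\mathbf q]}\,e^{-2\kappa_\n(U^\mu-\mathcal U^\mu)}$, obtain exact orthonormality on $\Omega$, and show the contribution from $\Omega'\setminus\overline{\Omega}$ vanishes. The only difference is that the paper handles the exterior piece by bare dominated convergence (strict positivity of $U^\mu-\mathcal U^\mu$ on the compact set $\supp\chi\setminus\overline\Omega$, together with the uniform bound on $\psi_{k,\n}$ from Proposition~\ref{prop:good-basis}(iii)), whereas you extract the quantitative rate $\Ordo(\kappa_\n^{-1/2})$ via the uniform quadratic lower bound and a Gaussian integral in the normal direction; this extra precision is not needed here but is harmless.
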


\begin{figure}[t!]
\centering
\includegraphics[width=.64\linewidth]{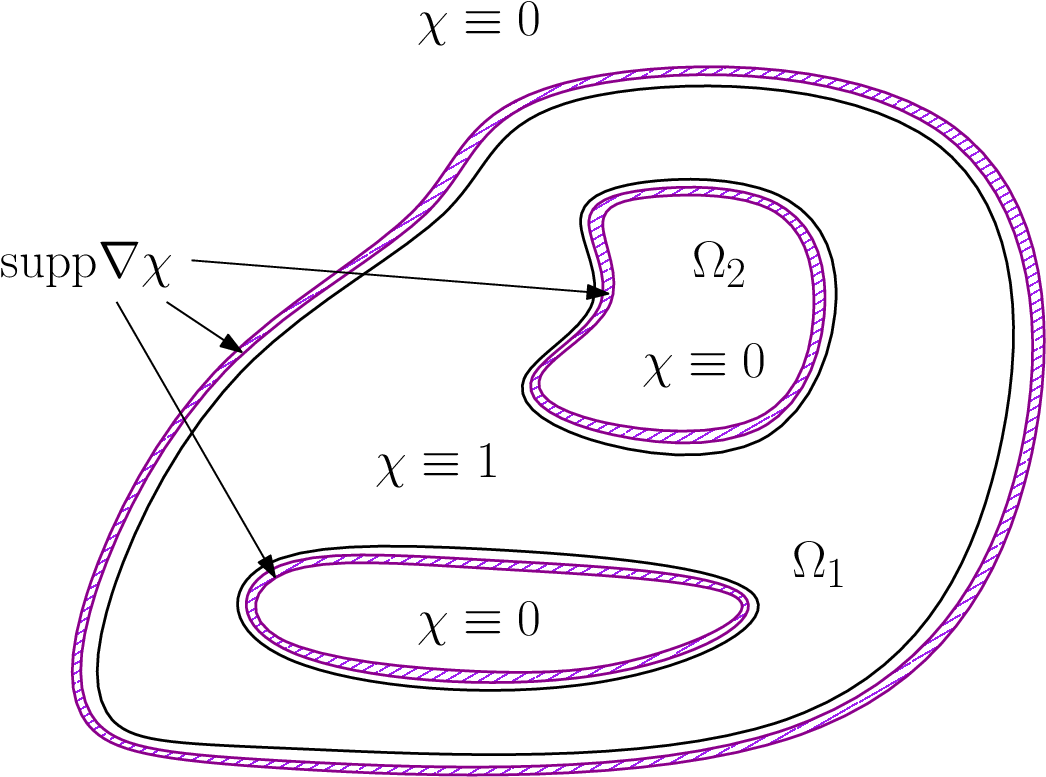}
\caption{The cut-off function $\chi$ for $\Omega=\Omega_1$ 
used to define the function $F_{k,\n}$.}
\label{fig:cut-off}
\end{figure}

\begin{proof}
By the definition of $\psi_{k,\n}$ we find that
\[
\int_\Omega F_{k,\n}(z) \bar{F}_{m,\n}(z)
\e^{-2\kappa_\n U^{\mu}(z)}
\diff \m_\C(z)
= \delta_{k, m}.
\]
On the set $\Omega' \setminus \overline{\Omega}$,
Proposition~\ref{prop:extension1} implies that
$U^{\mu}-\mathcal{U}^\mu>0$. In view of Proposition~\ref{prop:good-basis} part 
(iii), the functions $\psi_{k,\n}$ are locally uniformly 
bounded in $\n$, so the same holds
for the functions $|F_{k,\n}|^2\e^{-2\kappa_\n U^\mu}$. We may hence
apply the dominated convergence theorem to
obtain
\begin{multline}
\int_{\Omega^c \setminus \Omega} |F_{k,\n}(z)|^2
 \e^{-2\kappa_\n U^{\mu}(z)}
\diff \m_\C(z)
\\
= \int_{\Omega^c \setminus \Omega} \chi(z)|\psi_{k,\n}(z)|^2
\e^{-2\kappa_\n (U^{\mu}(z)- \calU^{\mu}(z))}
\e^{-2 \mathbf Q \cdot \Biglog(z)}\to 0
\end{multline}
as $\n$ tends to infinity. Hence, we can conclude that
\[
\langle F_{k,\n}, F_{m,\n}
\rangle_\n
\xrightarrow[\n \to \infty]{} \delta_{k, m}.
\]
This completes the proof.
\end{proof}

Turning to the $\bar\partial$-surgery,
we have the following.

\begin{prop}[$\bar\partial$-surgery and H\"ormander estimates]
\label{prop:hormander}
For any fixed $k \in \N$, there exists a sequence of smooth functions
$(v_{k,\n})_{\n\in \N}$ defined on $\C$ with
$\|v_{k,\n}\|_\n^2 
\to 0$
as $\n\to\infty$, such that
$Q_{k,\n} :=F_{k,\n}-v_{k,\n}$
is a polynomial of degree at most $\n-1$ and
we have the locally uniform convergence
\[
Q_{k,\n}(z)\e^{-\kappa_\n \mathfrak h(z)}
\prod_{i=1}^l
(z-z_i)^{-(\kappa_\n q_i-\class{\kappa_\n q_i})} 
\xrightarrow[\n \to \infty]{} \psi_k(z)
\]
on $\Omega$.
\end{prop}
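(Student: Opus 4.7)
The plan is to use H\"ormander's $\bar\partial$-estimates to correct $F_{k,\n}$ into an entire function in the weighted $L^2$ space, and then read off the polynomial degree from the growth condition~\eqref{eq:holom-infinity}.

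\emph{Step 1: Smallness of the defect.} Since $\psi_{k,\n}$ extends holomorphically past $\partial\Omega$ by Proposition~\ref{prop:good-basis}(ii), $\mathfrak h$ is holomorphic on $\Omega'$, and the exponent $\kappa_\n q_i - [\kappa_\n q_i]$ is an integer by our choice of representative, every factor of $F_{k,\n}$ other than $\chi$ is holomorphic on a neighbourhood of $\overline{\Omega}$. Hence
\[
g_\n := \bar\partial F_{k,\n} = (\bar\partial\chi)\,\psi_{k,\n}\,\e^{\kappa_\n\mathfrak h}\prod_i(z-z_i)^{\kappa_\n q_i-[\kappa_\n q_i]}
\]
is supported in the compact set $K := \supp(\bar\partial\chi) \subset \Omega' \setminus \overline{\Omega}$. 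Using $\Re\mathfrak h = \calU^\mu - \mathbf q\cdot\Biglog$ from Lemma~\ref{lem: holom extension}, a direct computation gives
\[
|g_\n|^2 \e^{-2\kappa_\n U^\mu} = |\bar\partial\chi|^2\,|\psi_{k,\n}|^2\,\e^{-2\kappa_\n(U^\mu-\calU^\mu)}\prod_i|z-z_i|^{-2[\kappa_\n q_i]}.
\]
By Proposition~\ref{prop:extension2}, after arranging the transition of $\chi$ to happen at a positive distance from $\partial\Omega$, one has $U^\mu - \calU^\mu \ge c > 0$ uniformly on $K$; the remaining factors are uniformly bounded in $\n$ on $K$ by Proposition~\ref{prop:good-basis}(iii) and the convergence of $[\kappa_\n q_i]$. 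Hence $\|g_\n\|_\n^2 = \Ordo(\e^{-2c\kappa_\n}) \to 0$.

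\emph{Step 2: $\bar\partial$-surgery and polynomial degree.} I would invoke H\"ormander's $\bar\partial$-theorem in $L^2(\C, \e^{-2\kappa_\n U^\mu})$ to produce a smooth $v_{k,\n}$ satisfying $\bar\partial v_{k,\n} = g_\n$ and $\|v_{k,\n}\|_\n^2 \lesssim \kappa_\n^{-1}\|g_\n\|_\n^2 \to 0$. This is the main obstacle: the weight $\kappa_\n U^\mu$ is only weakly plurisubharmonic (its Laplacian vanishes on $\Omega$). The standard remedy is to perturb the weight to $\kappa_\n U^\mu + \epsilon\log(1+|z|^2)$, which is strictly subharmonic, derive the estimate using that $\Delta\phi \ge c\kappa_\n$ on $K$ (arranging $K$ to meet a region where $\mu$ has genuine positive contribution, or working with a measure-valued curvature on the curve part of $\supp\mu$), and then take $\epsilon \downarrow 0$. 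Setting $Q_{k,\n} := F_{k,\n} - v_{k,\n}$ we obtain an entire function with $\|Q_{k,\n}\|_\n \le \|F_{k,\n}\|_\n + \|v_{k,\n}\|_\n < \infty$. Since $U^\mu(z) = \log|z| + \Ordo(1)$ at infinity and $\kappa_\n > \n$, the argument behind~\eqref{eq:holom-infinity} identifies $A^2(\C, \e^{-2\kappa_\n U^\mu})$ with $\C_{\n-1}[z]$, so $Q_{k,\n}$ is a polynomial of degree at most $\n-1$.

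\emph{Step 3: Pointwise convergence on $\Omega$.} Because $\chi \equiv 1$ on $\Omega$, Proposition~\ref{prop:good-basis}(i) gives
\[
F_{k,\n}(z)\,\e^{-\kappa_\n\mathfrak h(z)}\prod_i(z-z_i)^{-(\kappa_\n q_i-[\kappa_\n q_i])} = \psi_{k,\n}(z) \longrightarrow \psi_k(z), \qquad z \in \Omega,
\]
so it suffices to prove that the analogous expression with $v_{k,\n}$ in place of $F_{k,\n}$ tends to zero. Its modulus equals $|v_{k,\n}(z)|\,\e^{-\kappa_\n U^\mu(z)}\prod_i|z-z_i|^{[\kappa_\n q_i]}$. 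Since $g_\n \equiv 0$ on $\Omega$, $v_{k,\n}$ is holomorphic on $\Omega$, and the reproducing kernel inequality in $A^2(\Omega, \e^{-2\kappa_\n U^\mu})$ yields
\[
|v_{k,\n}(z)|^2\,\e^{-2\kappa_\n U^\mu(z)} \le \kerK_{\e^{-2\kappa_\n U^\mu},\Omega}(z,z)\,\|v_{k,\n}\|_\n^2.
\]
By the identification in the proof of Lemma~\ref{lem:monotone}, this kernel equals $\BergK_{\Omega,[\kappa_\n\mathbf q]}(z,z)$, which is locally uniformly bounded in $\n$ by Proposition~\ref{thm: bergman continuity}. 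Combined with the local boundedness of $\prod_i|z-z_i|^{[\kappa_\n q_i]}$, this completes the proof.
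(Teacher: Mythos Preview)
Your Step~2 has a genuine gap that cannot be patched by the $\epsilon$-perturbation you suggest. The difficulty is not merely that $\Delta U^\mu$ vanishes on $\Omega$; it is that on the set $K=\supp(\bar\partial\chi)\subset\Omega'\setminus\overline\Omega$ itself, the potential $U^\mu$ need be neither $C^{1,1}$ nor strictly subharmonic. If a boundary component of $\Omega$ is a curve in $\Gamma$, then $K$ lies in an \emph{adjacent uncharged region}, where $\Delta U^\mu=0$ as a function; if it is a piece of $\partial D$ where $\omega$ vanishes (admissibility only rules out common zeros of $\omega$ and $\rho$), then again $\Delta U^\mu$ need not be bounded below on $K$, and the line-charge part of $\mu$ makes $U^\mu$ fail to be $C^{1,1}$. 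In either case the H\"ormander bound $\int|v|^2\e^{-\phi}\le\int|f|^2\e^{-\phi}/\Delta\phi$ is not available with $\phi=2\kappa_\n U^\mu$. Your perturbation $\phi_\epsilon=2\kappa_\n U^\mu+\epsilon\log(1+|z|^2)$ only gives $\Delta\phi_\epsilon\gtrsim\epsilon$ on $K$, not $\kappa_\n$, so you obtain $\int|v|^2\e^{-\phi_\epsilon}\lesssim\epsilon^{-1}\lVert g_\n\rVert^2$; worse, since $\phi_\epsilon\ge 2\kappa_\n U^\mu$, the inequality between norms goes the wrong way and you cannot pass back to $\lVert v\rVert_\n$.

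The paper resolves this by an essential additional step (Lemma~\ref{lem:changeofV}): it constructs a $C^2$ subharmonic function $V$ with $V\le U^\mu$ everywhere, $V=U^\mu$ on $\Omega$, $V\ge\calU^\mu$ on $\Omega'$, and $\Delta V\ge\delta>0$ on $\supp(\nabla\chi)$. One then applies the H\"ormander estimate (Lemma~\ref{lem:horm}) with $\phi=2\kappa_\n V$, which is smooth and strictly subharmonic on $K$; the inequality $V\le U^\mu$ lets one transfer the resulting bound on $\int|v|^2\e^{-2\kappa_\n V}$ to $\lVert v\rVert_\n^2$, while $V\ge\calU^\mu$ on $K$ gives the exponential decay you identified in Step~1. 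This auxiliary potential is the missing ingredient. Your Steps~1 and~3 are fine; in Step~3 the paper uses the sub-mean-value inequality on a small disk rather than the reproducing-kernel bound, but your argument via Lemma~\ref{lem:monotone} and Proposition~\ref{thm: bergman continuity} is equally valid.
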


This claim is essential for our proof. It follows from
H{\"o}rmander's classical $L^2$-estimate for the 
$\bar\partial$-operator, with some 
minor adjustments which we proceed to outline. The first adjustment 
(this version is taken from \cite{AHM}) gives 
control of the growth of the solution.
Recall that a $C^{1,1}$ function
is a differentiable function whose
derivative is locally Lipschitz.

\begin{lem}[H\"ormander estimate]
\label{lem:horm}
Let $\phi$ be a $C^{1,1}$ subharmonic function on
$\C$ such that $\phi(z)\geq (1+c)\log|z|^2$
for some constant $c >0$
and for $|z|$ large enough.
Let $\mathcal{K} \subset \mathbb C$ be
a compact set
where $\phi$ is strictly subharmonic.
For any $f \in L^\infty(\mathbb C)$ supported
on $\mathcal{K}$, there exists a solution 
$u: \C \to \C$ to the equation $\bar\partial u=f$
that satisfies
\begin{equation}
\int_{\C} |u|^2\e^{-\phi}\diff\m \le
2\int_\mathcal{K}|f|^2\frac{\e^{-\phi}}{\Delta\phi}\diff\m.
\end{equation}
\end{lem}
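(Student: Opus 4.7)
The plan is to apply the standard $L^2$-duality proof of H\"ormander's classical existence theorem for $\bar\partial$, with two mild adaptations: the weight $\phi$ is only $C^{1,1}$, so I first regularize by convolution with a mollifier; and strict subharmonicity is only assumed on the compact set $\mathcal K$ (which contains the support of $f$), so the right-hand side is finite despite possible degeneracy of $\Delta\phi$ elsewhere. The growth hypothesis $\phi(z)\ge (1+c)\log|z|^2$ at infinity ensures that $e^{-\phi}$ decays strictly faster than $|z|^{-2}$, making the weighted space $H:=L^2(\mathbb C,e^{-\phi}\diffA)$ genuinely Hilbert and, crucially, ensuring that $\bar\partial$ has closed range on it.

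First I would mollify $\phi$ to $\phi_\varepsilon := \phi\ast \chi_\varepsilon$ for a standard smooth approximate identity. The relevant properties of $\phi$ — subharmonicity, the lower bound at infinity, and the pointwise lower bound $\Delta\phi\ge\delta>0$ on $\mathcal K$ (obtained from compactness and strict subharmonicity) — all transfer to $\phi_\varepsilon$ for $\varepsilon$ small, with $\delta$ replaced by $\delta/2$ and $c$ by $c/2$. Working in $H_\varepsilon := L^2(\mathbb C, e^{-\phi_\varepsilon}\diffA)$, the densely defined operator $\bar\partial$ has formal adjoint $\bar\partial^* v = -\partial v + (\partial \phi_\varepsilon) v$ on $C^\infty_c$, and a single integration by parts (using that $\partial$ and $\bar\partial$ commute) produces the one-variable Bochner--Kodaira identity
\[
\|\bar\partial^* v\|_{H_\varepsilon}^2 = \|\bar\partial v\|_{H_\varepsilon}^2 + \int_{\mathbb C} (\partial\bar\partial \phi_\varepsilon)\,|v|^2 e^{-\phi_\varepsilon}\,\diffA, \qquad v\in C^\infty_c(\mathbb C).
\]
Dropping the nonnegative first term on the right yields the key a priori bound $\int(\partial\bar\partial\phi_\varepsilon)|v|^2 e^{-\phi_\varepsilon}\diffA \le \|\bar\partial^* v\|_{H_\varepsilon}^2$.

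Next, I would couple this with Cauchy--Schwarz applied to $\langle v,f\rangle_{H_\varepsilon}$, splitting the integrand as $v(\partial\bar\partial\phi_\varepsilon)^{1/2}$ against $f(\partial\bar\partial\phi_\varepsilon)^{-1/2}$; the latter factor is square-integrable since $f$ is supported on $\mathcal K$ where $\partial\bar\partial\phi_\varepsilon$ is bounded below. This shows that the antilinear functional $\bar\partial^* v \mapsto \overline{\langle v, f\rangle_{H_\varepsilon}}$ is well-defined and bounded on $\mathrm{range}(\bar\partial^*)\subset H_\varepsilon$. Hahn--Banach extension and the Riesz representation theorem then produce $u_\varepsilon \in H_\varepsilon$ solving $\bar\partial u_\varepsilon = f$ distributionally with $\|u_\varepsilon\|_{H_\varepsilon}^2 \le \int_\mathcal K |f|^2 (\partial\bar\partial\phi_\varepsilon)^{-1} e^{-\phi_\varepsilon}\,\diffA$. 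Sending $\varepsilon\to 0$, the right-hand side is uniformly bounded (since $\phi_\varepsilon\to\phi$ and $\Delta\phi_\varepsilon\to\Delta\phi$ a.e.\ on $\mathcal K$), so $(u_\varepsilon)$ has a weak $L^2(\mathbb C,e^{-\phi}\diffA)$-limit $u$; the equation $\bar\partial u = f$ passes to the limit in the sense of distributions, and weak lower semicontinuity of norms together with Fatou's lemma deliver the claimed estimate.

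The main obstacles I foresee are: (i) tracking all hypotheses (growth at infinity, strict subharmonicity on $\mathcal K$, $C^{1,1}$ regularity) carefully through the mollification step; (ii) verifying that $\bar\partial$ has closed range in $H_\varepsilon$ so that the Hahn--Banach/Riesz step produces an honest weak solution --- this is where the growth condition $\phi\ge (1+c)\log|z|^2$ is essential, as it rules out pathological behaviour of $\ker\bar\partial\cap H_\varepsilon$ at infinity; (iii) pinning down the precise numerical constant --- since the standard convention gives $\Delta = 4\partial\bar\partial$, the naive duality bound produces a factor $4$, and the stated factor $2$ is recovered either by a sharper splitting in the Cauchy--Schwarz step or by the paper's normalization of $\Delta$; this is bookkeeping rather than a conceptual difficulty.
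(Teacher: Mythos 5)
Your approach --- re-deriving the estimate from scratch via $L^2$-duality and regularizing the weight by convolution --- is genuinely different from the paper's. The paper cites the $C^{1,1}$ version of H\"ormander's theorem from \cite{AHM} (valid when $\Delta\phi>0$ everywhere) as a black box and applies it not to $\phi$ but to the explicit convex perturbation $\phi_\epsilon := (1-\epsilon)\phi + \epsilon\log(1+|z|^2)$. The crucial feature of that choice is that $\phi_\epsilon$ is strictly subharmonic everywhere \emph{and}, by the growth hypothesis, satisfies $\phi_\epsilon(z)\le\phi(z)$ for $|z|$ large, so the passage from $\int|u|^2e^{-\phi_\epsilon}$ to $\int|u|^2e^{-\phi}$ goes the right way at a fixed small $\epsilon$ --- no limiting procedure is needed. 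This is exactly where the hypothesis $\phi\ge(1+c)\log|z|^2$ earns its keep, not in a ``closed range'' claim.

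Your mollification, in contrast, moves in the unfavourable direction: for subharmonic $\phi$ and a radial mollifier one has $\phi_\varepsilon=\phi*\chi_\varepsilon\ge\phi$, hence $e^{-\phi_\varepsilon}\le e^{-\phi}$, and the uniform bound on $\|u_\varepsilon\|_{L^2(e^{-\phi_\varepsilon})}$ gives \emph{no} a priori control on $\|u_\varepsilon\|_{L^2(e^{-\phi})}$; indeed $u_\varepsilon$ need not belong to $L^2(e^{-\phi})$ at all. The assertion that $(u_\varepsilon)$ has a weak limit in $L^2(\C,e^{-\phi}\diffA)$ is therefore a genuine gap, not a technicality. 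It can be patched --- for $\varepsilon\le\varepsilon_0$ one has $\phi_\varepsilon\le\phi_{\varepsilon_0}$, so the family is uniformly bounded in the fixed space $L^2(e^{-\phi_{\varepsilon_0}})$; extract a weak limit $u$ there, check by uniqueness of local weak limits that $u$ is independent of $\varepsilon_0$, and then send $\varepsilon_0\downarrow 0$ by monotone convergence using $e^{-\phi_{\varepsilon_0}}\uparrow e^{-\phi}$ --- but this patch hinges on noticing the monotonicity $\phi_\varepsilon\ge\phi$, which your sketch does not surface. Finally, on the constant: the paper extracts $2 = \tfrac32\cdot\tfrac43$ from the perturbation starting from a cited result with constant $1$; your from-scratch duality yields $\|u\|^2\le\int_{\mathcal K}|f|^2(\partial\bar\partial\phi)^{-1}e^{-\phi}\diffA$, and whether this is $\le 2\int_{\mathcal K}|f|^2(\Delta\phi)^{-1}e^{-\phi}\diffA$ hinges entirely on the convention for $\Delta$. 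Calling this ``bookkeeping'' is too casual: in the $\Delta=4\partial\bar\partial$ convention your bound has constant $4$ and does not prove the stated lemma, so the normalization has to be pinned down.
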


\begin{proof}
This follows directly from H{\"o}rmander's 
classical result for a $C^{1,1}$ function $\phi$
that satisfies $\Delta\phi>0$ on $\C$ (which gives 
the the conclusion of
Lemma~\ref{lem:horm} without the factor
2 on the right-hand side). 
Indeed, the function $\phi: \mathbb C \to \mathbb R$
defined by
\begin{equation}
\phi_\epsilon(z)=(1-\epsilon)\phi(z)+\epsilon\log(1+|z|^2).
\end{equation}
 meets
the conditions of H{\"o}rmander's theorem.
Then, for any $\epsilon>0$ there exists a 
solution $u=u_\epsilon$ to the equation
$\bar\partial u=f$, and when $\epsilon$ is sufficiently
small, we obtain
\[
\int |u|^2\e^{-\phi_\epsilon}\diffA\le \int |f|^2\frac{\e^{-\phi}}{\Delta\phi}
\frac{\Delta \phi}{\Delta \phi_\epsilon}
\e^{\epsilon(\phi-\log(1+|\cdot|^2))}\diffA
\le\frac32\int_{\mathcal{K}} |f|^2\frac{\e^{-\phi}}{\Delta \phi}\diffA.
\]
To complete the proof, it is enough to show that
for small $\epsilon >0$, we have
\[
\int |u|^2\e^{-\phi}\diffA\le \frac43\int |u|^2\e^{-\phi_\epsilon}\diffA.
\]
For this, we can use that,
for $|z|$ large enough,
\begin{align*}
\phi(z)&=\phi_\epsilon(z) +\epsilon(\phi(z)-\log(1+|z|^2))
														\\
&\ge \phi_\epsilon(z)+
\epsilon(\phi(z)-(1+\tfrac{c}{2}) \log|z|^2)			
														\\	
&\ge \phi_\epsilon(z) + \epsilon\tfrac{c}{2}\log|z|^2.
\end{align*}
We may thus split this integral according to 
$\C=\D(0,R)\cup(\C\setminus\D(0,R))$,
and use $\phi_\epsilon= \phi+\ordo(1)$ on the bounded set and
$\phi\ge\phi_\epsilon+\Ordo(1)$ on the unbounded set.
This completes the proof.
\end{proof}

Since $U^\mu$ is not necessarily $C^{1,1}$, 
we cannot apply Lemma~\ref{lem:horm} as stated with the
potential $\phi =2\kappa_\n U^{\mu}$ directly, so we introduce a new
potential $V$ which satisfies the required regularity assumptions,
whose existence and properties are given in the following lemma.
Before proceeding, the reader may want to recall 
Propositions~\ref{prop:extension1} and \ref{prop:extension2}.

\begin{figure}[t!]
\centering
\includegraphics[width=.65\linewidth]{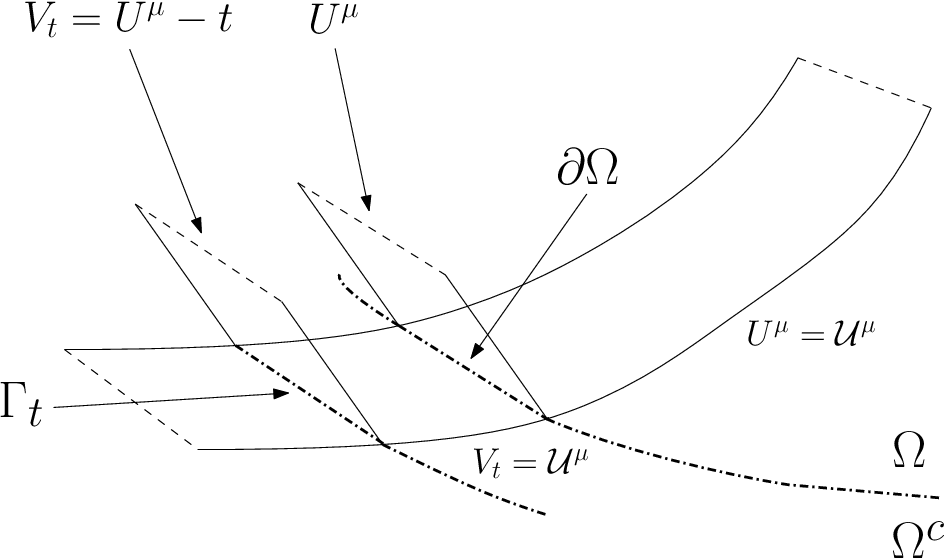}
\caption{Illustration of the functions 
$V_t$, $U^\mu$ and $\mathcal{U}^\mu$ from the proof of Lemma~\ref{lem:changeofV}
near $\partial\Omega$ and $\Gamma_t$.}
\label{fig:Vt-construct}
\end{figure}

\begin{lem} \label{lem:changeofV}
There exists a $C^{1,1}$-smooth subharmonic function 
$V:\mathbb C \to \mathbb R$ and a domain 
$\Omega''$ with $\overline{\Omega}\subset\Omega''\subset\Omega'$
such that
\begin{enumerate}[label=(\roman*)]
\item the inequality $V(z) \leq U^{\mu}(z)$ holds for $z\in\C$,
\item for $z \in \Omega$ we have equality $V(z) = U^{\mu}(z)$,
\item for $z \in \Omega''$ it holds that $\mathcal{U}^\mu(z)\le V(z)$, and
\item there exists $\epsilon_1,\epsilon_2>0$ such that 
$\Delta V(z)\ge \delta>0$ whenever
$\epsilon_1\le \mathrm{d}_{\C}(z,\Omega)\le \epsilon_2$.
\end{enumerate}
\end{lem}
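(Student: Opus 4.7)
The plan is to construct $V$ by blending the harmonic extension $\calU^\mu$ on a neighborhood of $\overline{\Omega}$ with the potential $U^\mu$ farther away, and then superimposing a smooth non-negative subharmonic bump in order to secure the strict lower bound on $\Delta V$ in the prescribed annulus. Let $\Omega'\supset\overline\Omega$ be the open set provided by Proposition~\ref{prop:extension1}, and set $h:=U^\mu-\calU^\mu$. By Proposition~\ref{prop:extension2}, $h$ vanishes on $\Omega$ and grows at least linearly (or, where $\rho$ vanishes, quadratically) in the outward normal direction from $\partial\Omega$, so that $h>0$ on $\Omega'\setminus\overline\Omega$.

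Fix $0<\epsilon_1<\epsilon_2$ with $\{z:\, d(z,\Omega)\le 2\epsilon_2\}\subset\Omega'$ and a smooth cut-off $\eta\colon\C\to[0,1]$ with $\eta\equiv 1$ on $\{d(z,\Omega)\le\epsilon_2+\sigma\}$ (for some $\sigma>0$) and $\eta\equiv 0$ outside a compact subset of $\Omega'$. Construct a smooth non-negative bump $\beta\colon\C\to[0,\infty)$ supported inside $\{\eta\equiv 1\}$, vanishing near $\overline\Omega$, and satisfying $\Delta\beta\ge c'>0$ on the annulus $A:=\{\epsilon_1\le d(z,\Omega)\le\epsilon_2\}$; a concrete choice is $\beta(z)=\psi(d(z,\partial\Omega))$ in a tubular neighborhood of $\partial\Omega$, using the analyticity of $\partial\Omega$ to smooth the distance, for a suitable bump profile $\psi$. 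With $\epsilon>0$ chosen small enough that $\epsilon\beta\le h$ on $\supp\beta$ (possible since $h>0$ there), define
\[
V(z) := U^\mu(z) - \eta(z)\bigl(h(z)-\epsilon\beta(z)\bigr)
\]
on $\Omega'$, and extend $V$ to $\C\setminus\Omega'$ by a $C^2$ subharmonic function pointwise $\le U^\mu$ (for instance, a slight downward shift of a mollification of $U^\mu$, matched smoothly at $\partial\Omega'$). Properties (i)--(iii) follow from the identities $V-U^\mu=-\eta(h-\epsilon\beta)\le 0$ and $V-\calU^\mu=(1-\eta)h+\epsilon\eta\beta\ge 0$ on $\Omega'$, together with the fact that $h\equiv\beta\equiv 0$ on $\Omega$. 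Property (iv) is then immediate because on $A\subset\{\eta\equiv 1\}$ we have $V=\calU^\mu+\epsilon\beta$, whence $\Delta V=\epsilon\Delta\beta\ge\epsilon c'=:\delta$.

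The main obstacle is verifying global subharmonicity $\Delta V\ge 0$. On $\{\eta\equiv 1\}$ this holds since $V=\calU^\mu+\epsilon\beta$ and $\beta$ is subharmonic; outside $\Omega'$ it is built into the extension. The delicate region is the transition zone where $\eta\in(0,1)$ and $\beta\equiv 0$, where
\[
\Delta V = (1-\eta)\,\Delta U^\mu - 2\,\nabla\eta\cdot\nabla h - h\,\Delta\eta.
\]
Taking $\eta$ to be a decreasing function of $d(z,\Omega)$ in the transition, the outward-pointing nature of $\nabla h$ (Proposition~\ref{prop:extension2}) and the inward-pointing nature of $\nabla\eta$ make $-2\nabla\eta\cdot\nabla h\ge 0$, while the first term is non-negative. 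The term $-h\,\Delta\eta$ has indefinite sign; it can be absorbed either into the reservoir $(1-\eta)\Delta U^\mu$ by choosing the transition to occur inside $\supp\mu$ where $\Delta U^\mu$ is bounded below, or by tuning the transition profile so that the convex portions of $\eta''$ are confined to a range where $h$ is correspondingly small. Managing this delicate balance, together with the smooth $C^2$ gluing across $\partial\Omega'$, is the principal technical content of the proof.
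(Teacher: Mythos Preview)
Your approach has two genuine gaps that are not merely technical.

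First, the bump $\beta$ cannot do what you ask of it. You require $\beta\ge 0$, $\beta$ supported in the compact set $\{\eta\equiv 1\}\setminus\overline\Omega$, and then assert that on $\{\eta\equiv 1\}$ subharmonicity holds ``since $V=\calU^\mu+\epsilon\beta$ and $\beta$ is subharmonic.'' But a non-trivial non-negative smooth function with compact support attains a positive interior maximum, where its Laplacian is $\le 0$; hence $\beta$ is never subharmonic on its support. So $\Delta V=\epsilon\Delta\beta<0$ somewhere in $\{\eta\equiv 1\}$, and (i)--(iv) together with global subharmonicity are not all achieved. Letting $\beta$ grow past $A$ instead of returning to zero only pushes the problem into the transition zone.

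Second, in that transition zone you write down $\Delta V=(1-\eta)\Delta U^\mu-2\nabla\eta\cdot\nabla h-h\Delta\eta$ and then offer two heuristics for controlling $-h\Delta\eta$, but neither is carried out. The first (absorb it into $(1-\eta)\Delta U^\mu$) fails outright when $\mu$ is supported on curves, since then $\Delta U^\mu=0$ off a null set and there is nothing to absorb into; it is also problematic whenever the density $\omega$ is small. The second (tune the profile so the convex part of $\eta$ sits where $h$ is small) is too vague to constitute a proof. The $C^2$ gluing across $\partial\Omega'$ is likewise only asserted.

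The paper sidesteps all of this by a different construction. For each $t\in(0,t_0]$ it sets $V_t=\max(\calU^\mu,\,U^\mu-t)$ on $\Omega'$ and $V_t=U^\mu-t$ elsewhere; concretely, $V_t=\calU^\mu$ outside the sublevel set $D_t=\{U^\mu-\calU^\mu>t\}$ and $V_t=U^\mu-t$ inside. Each $V_t$ is automatically subharmonic (max of a harmonic and a subharmonic function), globally defined, and satisfies $\calU^\mu\le V_t\le U^\mu$ by construction. One then averages $V=\int_0^{t_0}V_t\,\varphi(t)\,\diff t$ against a smooth probability density $\varphi$ with $\varphi(0)=0$; this smears the singular line measures on $\Gamma_t=\partial D_t$ into a smooth positive density, yielding a $C^\infty$ subharmonic $V$ with $\Delta V$ bounded below on the foliated collar. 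No cut-off, no bump, and no balancing act in a transition zone are needed.
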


The idea of the proof is to construct 
\say{inverse balayage potentials} $V_t$ obtained
by moving the mass $\mu\vert_{\partial\Omega}$ 
to curves $\Gamma_t$ inside $\overline{\Omega}^c$, and then 
mollify $\Delta V_t$ to obtain an averaged inverse balayage potential $V$.
It may be instructive to consider first the simple 
case when $\mu=\mu_1$ is normalized arc-length 
measure on the unit circle, in which case $U^\mu$ equals
$\log|z|$ outside $\D$ and vanishes on $\Omega=\D^c$.
By Newton's theorem, the logarithmic potential 
of any rotationally invariant probability measure
supported in $\D$ has the same logarithmic potential as $\mu$ outside $\D$. 
If we denote by $\mu_r$ the normalized arc-length measure on $\D(0,r)$,
we see that $U^{\mu_r}=U^\mu$ outside $\D$ for $r<1$ and $U^{\mu_r}$ 
supplies an inward harmonic continuation of 
$U^{\mu}\vert_{\D^c}$. As a consequence, 
a function $V$ with the desired properties
can be obtained by a weighted average 
$V(z)=\int \varphi(t)U^{\mu_t}(z)\diff t$,
where $\varphi$ is a sufficiently regular 
non-negative function compactly supported in $(0,1]$
with $\int\varphi(t)\diff t=1$.

\begin{proof}
We will make use of the harmonic extension $\calU$ of 
$U^\mu\vert_\Omega$ past the 
boundary $\partial\Omega$ to some larger domain, denoted $\Omega'$. 
By performing a Taylor expansion at $\partial\Omega$, 
we found in Proposition~\ref{prop:extension2} 
that $(U^\mu-\calU)(z_0-t{\rm n})= c_1(z_0)t + \Ordo(t^2)$ 
or $(U^\mu-\calU)(z_0-t{\rm n})= c_2 t^2 + \Ordo(t^3)$ depending on whether 
$\mu$ charges the relevant boundary component.
Here $c_i(z)>0$ are smooth strictly positive functions on the components of 
$\partial\Omega$, 
$t>0$, and ${\rm n}$ denotes the unit normal pointing into $\Omega$
at $z_0\in\partial\Omega$. 

It suffices to prove the lemma when the density $\densityBulk$ 
of $\mu$ with respect to area measure vanishes identically.
Indeed, if $\mu_2$ is
the part of the measure $\mu$ which is supported on $\partial\Omega$,
we may apply the lemma to $U^{\mu_2}$ to produce a 
function $V_2$. Then since $U^{\mu_1}$
is $C^{1,1}$-smooth, the function $V=U^{\mu_1}+V_2$
satisfies the requirements of the lemma.
We moreover assume that $\Omega$ is unbounded -- the 
bounded case also follows in entirely the same fashion.
It is similarly clear that the claim is local near each boundary component, 
so we may without loss of generality assume that 
$\Omega$ is simply connected on the sphere.

Recall that $D$ is the support of the measure $\mu$.
By taking $t_0>0$ and $d_0$ small enough, 
the level curves 
\[
\Gamma_{t}=\{z\in D: \mathrm{d}(z,\partial\Omega)\le d_0, 
U^\mu-\calU^\mu=t\}
\] 
remain smooth and simple for $0\le t\le 2t_0$, and deform smoothly with $t$. 
This follows e.g.\ from a direct construction of conformal maps $\varphi_t$
of $\D_\e$ onto the domains enclosed by the level curves 
which extend conformally onto a larger exterior 
disk \cite[\S~6.2]{HW}, but it can also be seen by more elementary means using
Morse Theory\footnote{Specifically the regular interval lemma, see, e.g., 
Hirsch's book \cite[Theorem~2.2]{Hirsch}.}.
We take $\Omega''$ to be the unbounded
component of $\C\setminus\Gamma_{2t_0}$. 
The set $\Omega''\setminus\Omega$ is a thin annular
domain in $D$ next to $\partial\Omega$, 
which is foliated by the level curves $\Gamma_t$
for $0\le t\le 2t_0$.
Then, for each $t$ with $0<t<t_0$, we define the function $V_t$ 
by declaring that it equals $U^\mu-t$ on the region $D_t$ 
enclosed by $\Gamma_t$ and $\calU^\mu$ outside $D_t$, 
see Figure~\ref{fig:Vt-construct}. Hence, we have
$V_t\le U^\mu$ on $\C$, $V_t=U^\mu$ on $\Omega$, and on any 
level curve $\Gamma_{t'}$ with $t<t'\le 2t_0$ we have
\[
V_t-\calU^\mu=U^\mu-t-\calU^\mu=t'-t>0.
\]

We now construct the function $V$ by integration
\begin{equation}
\label{eq:VasIntegralPhi}
V(z)=\frac{1}{t_0}\int_{0}^{t_0}V_t(z)\diff t.
\end{equation}
This construction mollifies the measures $\Delta V_t$, so that
the desired comparisons between $U^\mu$, $\mathcal{U}^\mu$
and $V$ remain valid. To see that $V\in C^{1,1}$ 
and to calculate its Laplacian, note
that for any smooth compactly supported
function $f$ we have, 
by Fubini's theorem and uniform boundedness of $(V_t)_t$, that
\begin{align}
\label{eq:dist-calcLapV1}
\int \Delta f(z)V(z)\diffA(z)&=
\frac{1}{t_0}\int_{0}^{t^0}\int \Delta f(z)V_t(z)\diffA(z)\diff t
\\&=\frac{1}{t_0}\int_{0}^{t^0}\int f(z)\Delta V_t(z)\diffA(z)\diff t
\end{align}
But, in the sense of distributions, 
$\Delta V_t$ equals the Neumann jump of $V_t$ across $\partial\Omega$:
\begin{equation}
\label{eq:Neumann-jump}
\mathcal{N}_{\Gamma_t}[V_t]\diffs_{\Gamma_t}:=
-\Big(\partial_{{\rm n}_{D_t}}+\partial_{{\rm n}_{D_t^c}}\Big)V_t=
\partial_{{\rm n}_{D_t}}(\mathcal{U}^\mu-U^\mu),
\end{equation}
(see, e.g., \cite[Eq. (2.5)]{AHM3}), 
where the last equality holds
since $\mathcal{U}^\mu$ is smooth across $\Gamma_t$. Here, ${\rm n}_{D_t}$ 
and ${\rm n}_{D_t^c}$ denote the outward unit normals
to $D_t$ and $D_t^c$ at $\Gamma_t$, respectively.
As a consequence of \eqref{eq:Neumann-jump}, we find that
\begin{align}
\label{eq:dist-calcLapV}
\frac{1}{t_0}\int_0^{t_0}\int \Delta f(z) V_t(z)\diffA\diff t&
=\frac{1}{t_0}\int_0^{t_0}\int_{\Gamma_t} 
f(z)\mathcal{N}_{\Gamma_t}[V_t](z)\diffs(z)\diff t
\\&=\frac{1}{t_0}\int_0^{t_0}\int_{\Gamma_t}f(z)
\partial_{{\rm n}_{D_t}}(\mathcal{U}^\mu-U^\mu)(z)\diffs(z)\diff t
\\&=\frac{1}{t_0}\int_{D\setminus D_{t_0}}f(z)
|\nabla(\mathcal{U}^\mu-U^\mu)(z)|^2\diffA(z)
\end{align}
where the last step follows from the co-area formula, which asserts that 
$\int_{D\setminus D_{t_0}} F\diffA=\int_0^{t_0}\int_{\Gamma_t} 
F/|\nabla(\mathcal{U}^\mu-U^\mu)|\diffs\diff t$, and the fact that
$\nabla (\mathcal{U}-U^\mu)$ is orthogonal to the level 
sets $\Gamma_t$ of $U^\mu-\mathcal{U}$. 
Combining \eqref{eq:dist-calcLapV} with \eqref{eq:dist-calcLapV1}, we see that 
\[
\Delta V=t_0^{-1}|\nabla (\mathcal{U}^\mu-U^\mu)|^2 1_{D\setminus D_{t_0}}
\]
in the sense of distributions and since 
$|\nabla (\mathcal{U}^\mu -U^\mu)|>0$ on $\partial\Omega$, 
and hence we find that $\Delta V$ meets the desired positivity
condition $(iv)$ for $t_0$ sufficiently small. Moreover, it is clear that 
$V$ is $C^{1,1}$-smooth. Indeed, a computation
similar to \eqref{eq:dist-calcLapV} reveals that the gradient 
$\nabla V$ is Lipschitz continuous and, in fact, piecewise analytic.
Since $V$ also satisfies $(i)$-$(iii)$ by construction, the proof is complete.
\end{proof}

By possibly shrinking $\Omega'$, we may 
assume without loss of generality that $\Omega'=\Omega''$.

We are now ready for the proof of our proposition on $\bar\partial$-surgery.
\begin{proof}[Proof of Proposition~\ref{prop:hormander}]
By Lemma~\ref{lem:changeofV}, there
exists a $C^{1,1}$ function $V:\mathbb C \to \mathbb R$
and a constant $\delta>0$ such that
\[
V \geq
\calU^{\mu}  \text{ and }
\Delta V \geq \delta
\text{ where } \nabla \chi \neq 0,\]
and such that
$U^\mu \geq V$ everywhere and
$V\vert_\Omega = U^{\mu}\vert_\Omega$.
By Lemma~\ref{lem:horm} applied with $\phi = 2\kappa_\n V$ and
$f=\bar\partial F_{k,\n}$, that is
\[
f = (\bar{\partial} \chi ) \psi_{k,\n}
\e^{\kappa_\n \mathfrak h} 
\prod_{i=1}^{l}(z-z_i)^{\kappa_\n q_i-\class{\kappa_\n q_i}},
\]
there exists a function $v_{k,\n}$ such that
$\bar \partial  v_{k,\n}=\bar \partial F_{k,\n}$ and
\begin{align}
\label{eq:horm-pf-eq1}
\int_\C \big|v_{k,\n}\big|^2 \e^{-2\kappa_\n V}
\diff \m 
& \le \int_{\supp \nabla \chi}
\frac{\big|\bar \partial F_{k,\n}\big|^2}{\kappa_\n \Delta V}
\e^{-2\kappa_\n V}\diff\m \\
&\le
\int_{\supp \nabla \chi}
\frac{\big|\bar \partial \chi \big|^2}{\kappa_\n \delta}
\big|\psi_{k,\n} \big|^2
\e^{-2\kappa_\n \left(V- \mathcal U^\mu \right)}
\e^{2\mathbf q \cdot \Biglog}
\diff\m.
\end{align}

The important thing to notice is that on the support of $\bar\partial \chi$, 
(which is essentially the gradient $\nabla\chi$)
the weight $\e^{-2\kappa_\n(V-\calU)}$ is bounded, and $\psi_{k,\n}$
is uniformly bounded in $\n$.
Hence
the right-hand side of \eqref{eq:horm-pf-eq1} tends to zero as $\n$ goes to
infinity, and since $V\le U^\mu$ on $\C$ we have
\[
\| v_{k,\n} \|^2_\n=
\int_\C \big|v_{k,\n}\big|^2 \e^{-2\kappa_\n U^\mu} \diff\m
\leq
\int_\C \big|v_{k,\n}\big|^2 \e^{-2\kappa_\n V} \diff\m
\xrightarrow[\n \to \infty]{} 0.
\]
In particular, this shows that if we put 
$Q_{k,\n} := F_{k,\n}-v_{k,\n}$, 
then $\lVert Q_{k,\n}\rVert_{N}<+\infty$.
The bound \eqref{eq:holom-infinity} implies that there
exists a constant $C>0$ such that
\[
\big|Q_{k,\n}(z)\big| \leq
\frac{C |z|^{\kappa_\n}}{|z|^{1+\epsilon/2}}
\leq \frac{C |z|^{\n+1}}{|z|^{1+\epsilon/2}}\leq C |z|^{\n-\epsilon/2}
\]
for $|z|$ large enough.
Since $\bar \partial Q_{k,\n} 
= \bar \partial F_{k,\n} -
\bar \partial v_{k,\n}=0$, we get that
$Q_{k,\n}$ is entire and,
since $|Q_{k,\n}|$
grows at most like
$|z|^{\n-\epsilon/2}$ (in fact like $|z|^{\n-1}$),
Liouville's theorem implies that
$Q_{k,\n}$ is a polynomial of degree at most $\n-1$.

The holomorphicity of $v_{k,\n}|_\Omega$
implies that
$\big|v_{k,\n} \big|^2
\e^{-2\kappa_\n U^\mu}$ is subharmonic 
on $\Omega$. Then, for any fixed $z \in \Omega$ and any $\varepsilon>0$
such that $\bar D(z,\varepsilon) \subset \Omega$,
we may write
\begin{align*}
\big|v_{k,\n}(z) \big|^2
\e^{-2\kappa_\n U^\mu(z)}
& \le 
\frac{1}{\pi \varepsilon^2} \int_{D(z,\varepsilon)}
\big|v_{k,\n}(z) \big|^2
\e^{-2\kappa_\n U^\mu(z)}
\diff \m_{\mathbb C}(z)
\\ & \le \frac{1}{\pi \varepsilon^2} \|v_{k,\n}\|_\n^2 
\xrightarrow[\n \to \infty]{} 0.
\end{align*}
This implies that 
\[
\Big| Q_{k,\n}(z)\e^{-\kappa_\n \mathfrak h(z)}
\prod_{i=1}^l
(z-z_i)^{-(\kappa_\n q_i-\class{\kappa_\n q_i})} 
- \psi_k(z) \Big|^2
=
\big|v_{k,\n}(z) \big|^2
\e^{-2\kappa_\n U^\mu(z)}
\xrightarrow[\n \to \infty]{} 0
\]
locally uniformly, which is what we wanted.
\end{proof}

Recall that, by \eqref{eq:holom-infinity} and Liouville's theorem,
 $A^2(\C,\e^{-2\kappa_\n U^\mu})=\C_{\n-1}[z]$ as sets.

\begin{lem}[The adapted orthonormal basis]\label{lem:goodpolybasis}
For any fixed $k_0 \in \N$, 
there exists an orthonormal family  $(P_{k,\n})_{k \leq k_0}$
in $A^2(\C,\e^{-2\kappa_\n U^\mu})$, such that
for any $k \leq k_0$, we have
\[
P_{k,\n}(z)\e^{-\kappa_\n \mathfrak h(z)}
\prod_{i=1}^l
(z-z_i)^{-(\kappa_\n q_i-\class{\kappa_\n q_i})} 
\xrightarrow[\n \to \infty]{} \psi_k(z).\]
for every $z \in \Omega$. In particular,
\[
\big|P_{k,\n} (z) \big|^2
\e^{-2\kappa_\n U^{\mu}(z)}
\xrightarrow[\n \to \infty]{}
\big|\psi_k(z) \big|^2
\e^{-2\mathbf Q \cdot \Biglog(z)}.
\]
\end{lem}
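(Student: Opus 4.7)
The plan is to apply the Gram–Schmidt procedure to the approximate quasipolynomials $Q_{k,\n}$ produced by Proposition~\ref{prop:hormander}, and then to verify that the orthonormalization does not destroy the pointwise asymptotics. The first step is to check that the family $(Q_{k,\n})_{k\le k_0}$ is asymptotically orthonormal in $L^2(\C,\e^{-2\kappa_\n U^\mu})$. From the preceding lemma we have $\langle F_{k,\n},F_{m,\n}\rangle_\n\to\delta_{km}$, while Proposition~\ref{prop:hormander} gives $\|v_{k,\n}\|_\n\to 0$. Cauchy–Schwarz together with $\|F_{k,\n}\|_\n=\mathrm{O}(1)$ then shows that the cross terms tend to zero, so the Gram matrix
\[
G_\n=\big(\langle Q_{k,\n},Q_{m,\n}\rangle_\n\big)_{0\le k,m\le k_0}
\]
converges to the identity.

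Next, I would run Gram–Schmidt inductively on $Q_{0,\n},\dots,Q_{k_0,\n}$ to produce $P_{k,\n}=\sum_{j\le k}c_{kj,\n}Q_{j,\n}$ with $(P_{k,\n})_{k\le k_0}$ orthonormal in $L^2(\C,\e^{-2\kappa_\n U^\mu})$. Since $G_\n\to I$, an easy induction shows that the coefficients satisfy $c_{kj,\n}\to\delta_{kj}$ as $\n\to\infty$: the inductive hypothesis yields $\langle Q_{k,\n},P_{j,\n}\rangle_\n\to 0$ for $j<k$, and $\|\tilde P_{k,\n}\|_\n\to 1$ after subtracting these projections. Because the $Q_{j,\n}$ are polynomials of degree at most $\n-1$, the orthonormalized $P_{k,\n}$ lie in the same polynomial space, which by \eqref{eq:holom-infinity} is all of $A^2(\C,\e^{-2\kappa_\n U^\mu})$.

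The pointwise claim then follows directly from linearity. Writing
\[
\Lambda_\n(z):=\e^{-\kappa_\n\mathfrak h(z)}\prod_{i=1}^l(z-z_i)^{-(\kappa_\n q_i-[\kappa_\n q_i])},
\]
Proposition~\ref{prop:hormander} gives $Q_{j,\n}(z)\Lambda_\n(z)\to\psi_j(z)$ for $z\in\Omega$, and hence
\[
P_{k,\n}(z)\Lambda_\n(z)=\sum_{j\le k}c_{kj,\n}\,Q_{j,\n}(z)\Lambda_\n(z)\longrightarrow\sum_{j\le k}\delta_{kj}\psi_j(z)=\psi_k(z).
\]
For the ``in particular'' statement, I take squared moduli and use Lemma~\ref{lem: holom extension} together with $\calU^\mu(z)=U^\mu(z)$ for $z\in\Omega$ to compute
\[
|\Lambda_\n(z)|^2=\e^{-2\kappa_\n U^\mu(z)+2[\kappa_\n{\bf q}]\cdot\Biglog(z)},
\]
so that $|P_{k,\n}(z)|^2\e^{-2\kappa_\n U^\mu(z)}=|P_{k,\n}(z)\Lambda_\n(z)|^2\e^{-2[\kappa_\n{\bf q}]\cdot\Biglog(z)}$ converges to $|\psi_k(z)|^2\e^{-2{\bf Q}\cdot\Biglog(z)}$, using $[\kappa_\n{\bf q}]\to{\bf Q}$.

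The only mildly delicate point is the bookkeeping for the Gram–Schmidt step: one must propagate both the norm convergence (to justify that normalization constants behave well) and the pointwise convergence of $Q_{j,\n}\Lambda_\n$ simultaneously through the induction. Once the convergence $G_\n\to I$ is in hand, however, this is essentially a finite-dimensional perturbation argument and does not require any additional analytic input beyond what Proposition~\ref{prop:hormander} already supplies.
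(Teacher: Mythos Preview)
Your proposal is correct and follows essentially the same approach as the paper: establish that the Gram matrix of $(Q_{k,\n})_{k\le k_0}$ converges to the identity, apply Gram--Schmidt, and use linearity together with Proposition~\ref{prop:hormander} to transfer the pointwise convergence to the orthonormalized family. The only cosmetic difference is that the paper records the Gram--Schmidt output via an explicit determinant formula rather than your inductive argument for $c_{kj,\n}\to\delta_{kj}$, but the content is the same.
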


\begin{proof}
Recall the holomorphic functions $F_{k,\n}$ and the 
$\bar\partial$-corrections $v_{k,\n}$ from Proposition~\ref{prop:hormander}.
Since
\[\langle  F_{k,\n}, F_{m,\n} \rangle_\n
\xrightarrow[\n \to \infty]{} \delta_{k, m}\quad \text{ and } \quad
\|v_{k,\n}\|_\n \xrightarrow[n\to \infty]{} 0,\]
we have
\begin{equation}
\label{eq:AlmostOrthogonal}
\langle Q_{k,\n},
Q_{m,\n}\rangle_\n
\xrightarrow[\n \to \infty]{}
\delta_{k, m},
\end{equation}
where $Q_{k,\n}$ are the polynomials 
$Q_{k,\n}=F_{k,\n}-v_{k,\n}$
from Proposition~\ref{prop:hormander}.
In other words, for any fixed positive integer $k_0$,
the Gram matrix 
$(\langle Q_{k,\n},
Q_{m,\n}\rangle_\n)_{1 \leq k,m \leq k_0}$ of the family 
$(Q_{k,\n})_{k \leq k_0}$
converges towards the identity matrix, hence for $\n$ large enough, it is
invertible and the family 
$(Q_{k,\n})_{1\leq k\leq k_0}$ is linearly independent.
Moreover, if $(P_{k,\n})_{k \leq k_0}$ is the 
Gram-Schmidt orthonormalization of
$(Q_{k,\n})_{k \leq k_0}$, the fact
that the Gram matrix 
of $(Q_{k,\n})_{k \leq k_0}$ converges to the identity
let us conclude the proof.
This can be seen, for instance, by induction or
by using an explicit formula
such as the following.
Fix $k \le k_0$ and let $D_{k}^{(\n)}$ 
be the the determinant of the Gram matrix of 
$(Q_{j,\n})_{j \leq k}$ and let $D_{k-1}^{(\n)}$ be 
the determinant of the Gram matrix of 
$(Q_{j,\n})_{j \leq k-1}$. Then, 
\[P_{k,\n}(z) =
\frac{1}{\sqrt{D_{k}^{(\n)} 
D_{k-1}^{(\n)}}}
\left| \hspace{-2mm}
\begin{array}{cccc}
\langle Q_{1,\n},Q_{1,\n} \rangle_\n &
\langle Q_{2,\n},Q_{1,\n} \rangle_\n &
\dots & \langle Q_{k,\n},Q_{1,\n} \rangle_\n
\\
\langle Q_{1,\n},Q_{2,\n} \rangle_\n &
\langle Q_{2,\n},Q_{2,\n} \rangle_\n &
\dots & \langle Q_{k,\n},Q_{2,\n} \rangle_\n				
\\
\vdots & \vdots & \ddots & \vdots
\\
\langle Q_{1,\n},Q_{k-1,\n} \rangle_\n &
\langle Q_{2,\n},Q_{k-1,\n} \rangle_\n &
\dots & \langle Q_{k,\n},Q_{k-1,\n} \rangle_\n			
\\
Q_{1,\n}(z) & Q_{2,\n}(z) & \dots & Q_{k,\n}(z)
\end{array}\hspace{-2mm}
\right|.
\]
Using the linearity of the determinant
in the last row and the convergence
of the inner products, we see that
$P_{k,\n}=Q_{k,\n}(1+\ordo(1))$.
Proposition~\ref{prop:hormander} then allows us to conclude
the proof by multiplying with 
$\e^{-\kappa_\n \mathfrak h(z)}
\prod_{i=1}^l
(z-z_i)^{-(\kappa_\n q_i-\class{\kappa_\n q_i})} $.
\end{proof}
\section{Proofs of the main results}
\subsection{Proof of Theorem~\ref{thm:NonSimplyConnected}}
We finish the proof of the theorem by completing the family
$(P_{k,\n})_{k \le k_0}$ to an orthonormal basis
$(P_{k,\n})_{k \le \n-1}$ of $A^2(\C,\e^{-2\kappa_\n U^\mu})$.
We obtain
\[
\sum_{k=0}^{k_0}
\big|P_{k,\n}(z)\big|^2
\e^{-2\kappa_\n U^{\mu}(z)}
\leq  \sum_{k=0}^{\n-1}
\big|P_{k,\n} (z) \big|^2
\e^{-2\kappa_\n U^{\mu}(z)}
=\kerK_\n(z,z)
\]
and, since
\[\sum_{k=1}^{k_0}
\big|\psi_k(z) \big|^2
\e^{-2\mathbf Q \cdot \Biglog(z)}
=
\lim_{\n \to \infty}
\sum_{k=1}^{k_0}
\big|P_{k,\n} (z) \big|^2
\e^{-2\kappa_\n U^{\mu}(z)},
\]
we obtain for any $z \in \Omega$
\[\sum_{k=1}^{k_0}
\left|\psi_k(z) \right|^2
\e^{-2\mathbf Q \cdot \Biglog(z)}
\leq \liminf_{\n \to\infty} \kerK_\n(z,z).
\]
By taking $k_0\to \infty$, we find that
\[
\BergK_{\Omega, \bf Q}(z,z) = \sum_{k=1}^\infty
\left|\psi_k(z) \right|^2
\e^{-2\mathbf Q \cdot \Biglog(z)}
\leq \liminf_{\n \to\infty} \kerK_\n(z,z) 
\]
and in view of the convergence $\class{\kappa_{\n}{\bf q}}\to{\bf Q}$,
Lemma~\ref{lem:monotone} and Proposition~\ref{thm: bergman continuity}
together imply that $\limsup_{N\to\infty} \kerK_\n(z,z)
\le B_{\Omega, \bf Q}(z,z)$,
which completes the proof (recall that all limits are taken along 
the subsequence $\mathcal{L}$).

\subsection{Proof of Theorem~\ref{thm:main-simple}}
If $\Omega$ is a bounded simply connected domain, then the theorem
is an immediate application of Theorem~\ref{thm:NonSimplyConnected}.
If $\Omega$ is the unbounded component of
$\C \setminus \supp(\mu)$ and is simply connected in
$\widehat{\C}$ (for instance, the complement of a single Jordan domain),
then we apply Theorem~\ref{thm:NonSimplyConnected},
and we remark that the $\mu$-mass
of the hole is $q_1=1$, hence $(\n+1)q_1$ it is an integer and the
sequence $\class{\n+1}$ is identically equal to zero.

\subsection{Proof of Theorem~\ref{thm:indep}}

The proof of the independence between components is  general and elementary.
Recall from the proof of
Theorem~\ref{thm:NonSimplyConnected}
that,
in view of
\eqref{eq:conv-masses},
the kernels
associated with
$\Phi_{1,\n}$ and 
$\Phi_{2,\n}$ (called
here
$\kerK_{1,\n}$ and 
$\kerK_{2,\n}$)
converge uniformly on compact 
subsets of $\Omega_1 \times \Omega_1$ and
$\Omega_2 \times \Omega_2$ towards two limiting Bergman kernels, 
which we denote by $\BergK_1$ and $\BergK_2$, respectively.
If we show that the kernel of the determinantal point process
\[
\Phi_{\n,\Omega_1 \cup \Omega_2} =  
\Theta_\n \cap \big(\Omega_1 \cup \Omega_2\big)
\]
converges uniformly on compact subsets of 
$(\Omega_1 \cup \Omega_2) \times (\Omega_1 \cup \Omega_2)$
towards the kernel
\[
\BergK (z,w) \overset{\rm def}{=} \begin{cases}
\BergK_1(z,w) & \text{ if  } (z,w) \in \Omega_1\times \Omega_1, \\
\BergK_2(z,w) & \text{ if } (z,w) \in \Omega_2\times \Omega_2, \\
0  &\text{ if } (z,w) \in (\Omega_1 \times \Omega_2) \cup (\Omega_2 
\times \Omega_1),
\end{cases}
\]
then the asymptotic independence of the point processes
follows (see Proposition~\ref{prop:UCVofKernels}) since
$\BergK $ corresponds to the kernel of the 
union of independent determinantal point
processes associated with $\BergK_1$ and $\BergK_2$
(see Proposition~\ref{prop:UnionOfDPP} of the Appendix). 

Here, as in Section~\ref{s:Bergman},
the kernel $\holkerK_\n$ associated with
$\Phi_{\n,\Omega_1 \cup \Omega_2}$ is
holomorphic 
in the first coordinate
and anti-holomorphic
in the second coordinate.
Thanks to the reproducing kernel property 
and Cauchy-Schwarz inequality, we obtain that
\[
\forall z \in \Omega_1, \forall w \in \Omega_2, 
\quad |\holkerK_\n(z,w)|^2 \leq \holkerK_\n(z,z)
\holkerK_\n(w,w).
\]
This allows us to apply Montel's Theorem,
and to show that any subsequence of the sequence 
$\holkerK_\n$ has at least one further subsequence
converging uniformly on compact subsets of 
$(\Omega_1\cup \Omega_2)\times (\Omega_1\cup \Omega_2)$.
On $\Omega_1 \times \Omega_1$ and $\Omega_2 \times \Omega_2$,
we have already established locally uniform convergence of the kernels.
The proof will be complete if we can show 
that for any $z \in \Omega_1$, we have
\[
\int_{\C \setminus \Omega_1} |\holkerK_\n(z,w)|^2 \diffA(w)
\xrightarrow[\n \to \infty]{} 0.
\]
To see why this convergence holds, let $S$ be a 
compact subset of $\Omega_1$. Then
\[
\holkerK_\n(z,z)=\int_{\C}|\holkerK_\n(z,w)|^2\diffA(w)
\ge \int_S|\holkerK_\n(z,w)|^2\diffA(w) +
\int_{\Omega_1^c}|\holkerK_\n(z,w)|^2\diffA(w).
\]
Since we know that $\holkerK_\n$ converges uniformly
towards $\BergK_1$ on the compact $S$, we get
\[
\BergK_{1}(z,z)\ge \int_S |\BergK_{1}(z,w)|^2\diffA (w)
+ \limsup_{\n\to\infty}\int_{\C \setminus \Omega_1}|
\holkerK_\n(z,w)|^2\diffA(w).
\]
Now, let $S$ exhaust $\Omega_1$, 
and notice that by the monotone convergence theorem we get
\[
\BergK_{1}(z,z)\ge \int_{\Omega_1} |\BergK_{1}(z,w)|^2\diffA(w)
+\limsup_{\n\to\infty}\int_{\C \setminus \Omega_1}
|\holkerK_\n(z,w)|^2\diffA(w)
\]
which, since $\BergK_1$ is a reproducing kernel on $\Omega_1$ yields
\[
\BergK_{1}(z,z) =\int_{\Omega_1} |\BergK_{1}(z,w)|^2\diffA(w),
\]
which completes the proof.

\section{Outliers for random polynomials}
This section is devoted to proving Theorem~\ref{thm:random-pol}.
We begin with a discussion of some preliminary material.

\subsection{Hardy spaces}
Denote by $\Omega$ a Jordan domain with 
analytic boundary (by convention, $\Omega$ is bounded), and let $\eta$
be a measure on $\partial\Omega$
with a continuous
and strictly positive
density with respect
to the arc length measure.
We define the Hardy space $H^2(\Omega,\eta)$ as the 
closure of analytic polynomials in the norm
\[
\lVert f\rVert_{H^2(\Omega,\eta)}^2:=\int_{\partial\Omega}|f|^2\,\diff\eta.
\] 
Analogously, if instead $\Omega$ is the complement of the closure of
a Jordan domain with analytic boundary, we define the Hardy space 
$H^2(\Omega,\eta)$ on $\C\setminus\overline{\Omega}$
as the closure of rational functions (with respect to the same norm)
with a fixed pole at a given
point $z_0\in\overline{\Omega}^c$. 

The Hardy space $H^2(\Omega,\eta)$ is a reproducing kernel Hilbert space, with
a Hermitian holomorphic reproducing kernel $S_{\Omega,\eta}(z,w)$
defined on $\Omega\times\Omega$, known as the \emph{Szeg\H{o}} kernel of 
$(\Omega,\eta)$.
If the measure is simply the arc length measure $\sigma_{\partial\Omega}$, 
we write $H^2(\Omega)$ and $S_\Omega$
for the space and the kernel, respectively. 
We denote by $H^2_\n(\Omega,\eta)$ the $\n+1$-dimensional 
polynomial Hardy space 
$\C_{\n}[z]\cap H^2(\Omega,\eta)$.

\begin{rem}\label{rem:multiplier-Hardy}
Let $\eta=\densityCurve \,\diff\sigma$, where 
$\densityCurve$ is strictly positive
and real-analytic.
Let $\fs=\fs_\densityCurve$ be a bounded holomorphic function on $\Omega$ 
whose real part equals $\frac12\log\densityCurve$
on the boundary $\partial\Omega$. Such a function is often referred to
as a \emph{Szeg\H{o}} function of $\densityCurve$. The function $\fs$ allows us to 
extend $\densityCurve$ to a log-harmonic function on $\Omega$
by putting $\densityCurve(z)=\big|\e^{2\fs (z)}\big|=\e^{2\Re \fs(z)}$. 
This implies that the multiplication operator $f\mapsto f\e^{-\fs}$ 
induces an isometric isomorphism between the spaces 
$H^2(\Omega)$ and $H^2(\Omega,\densityCurve\,\diff\sigma)$.
In particular the weighted Szeg\H{o} kernels satisfy the relation:
\[
\forall z,w \in \Omega, \quad S_{\Omega}(z,w) 
= S_{\Omega,\densityCurve} (z,w)  e^{ \fs(z) + \overline{ \fs(w) }}.
\]
In our main theorem on random polynomials (Theorem~\ref{thm:random-pol}), 
we work with the reproducing kernel
for polynomial subspaces of $L^2(e^{-2\n U^\nu}\diff\nu)$, 
where $\nu$ is supported on a regular Jordan curve. 
The above equality for the Szeg\H{o} kernel turns
out to be a crucial ingredient in our proof.
This type of equality would not be true for the corresponding Bergman kernels 
if the measure $\nu$ was supported on, e.g.,\ a domain. 
This fact is what prevents us from treating more general 
measures $\nu$ in the theorem.
\end{rem}

\subsection{Random polynomials}
Without loss of generality we may consider 
the case when $\Omega$ is bounded. 
The unbounded case
then follows by applying an appropriate inversion 
(see, for instance, \cite[Theorem~1.14]{BG}).

Notice that the Szeg\H{o} kernel $S_\Omega$ 
is the covariance kernel for the Gaussian analytic function
\[
P(z)=\sum_{k=0}^\infty \xi_k \psi_k(z),\qquad 
\xi_k\text{ i.i.d.},\; \xi_k\sim\mathcal{N}_{\C}(0,1),
\]
where $(\psi_k)_{k \geq 0}$
is any orthonormal basis
of $H^2(\Omega)$. 
Since by Remark~\ref{rem:multiplier-Hardy} 
$\psi_{k,\densityCurve}\overset{\rm def}{=}(\psi_k \e^{-\mathfrak{s}})_k$
is an orthonormal basis for $H^2(\Omega,\densityCurve\,\diff\sigma)$, 
it follows that the zero set of $P$ has the same
distribution as the zero set of the random analytic function 
$P_{\densityCurve}$, obtained similarly as a random linear combination
of the basis elements $\psi_{k,\densityCurve}$.

To prove Theorem~\ref{thm:random-pol} it is enough to show 
that the covariance kernel for the random polynomial \eqref{eq:rand-pol}, 
converges locally uniformly towards the 
Szeg\H{o} kernel $S_{\Omega,\densityCurve}$ 
(see, for instance, Section~2 in Shirai's paper \cite{Shirai}). 
Strictly speaking, this is not the case for our model, 
but it can be arranged by multiplying \eqref{eq:rand-pol} by an
appropriate zero-free holomorphic function.
Concretely, we introduce a holomorphic
function $\mathfrak{h}$ on $\Omega$ whose real part equals $U^\nu$
on $\partial\Omega$, and consider the random analytic function
\[
f_{\n}(z)=
\sum_{k=0}^{\n}
\xi_k Q_{k,\n}(z)
\e^{-\n \mathfrak{h}(z)} ,
\qquad \xi_k\text{ i.i.d.},\; \xi_k\sim\mathcal{N}_{\C}(0,1).
\]
Notice that $f_{\n}$ is a random linear 
combination of an orthonormal system in 
the Hardy space $H^2(\Omega,\densityCurve\,\diff\sigma)$, 
where we recall that $\nu=\densityCurve\,\diff\sigma$. 
The covariance kernel $\kerK_\n(z,w)$ of $f_N$ is given by
\[
\kerPol(z,w)=\sum_{k=0}^{\n}Q_{k,\n}(z)\overline{Q_{k,\n}(w)}
\e^{-\n\big(\mathfrak{h}(z)+\overline{\mathfrak{h}(w)}\big)}.
\]

\subsection{Proof of Theorem~\ref{thm:random-pol}} 
 \subsection*{Structure of the proof}
The proof is very similar to the proof of 
Theorem~\ref{thm:NonSimplyConnected}. We start by proving 
that $\kerPol(z,z)$ is bounded from above by the 
Szeg\H{o} kernel, in a similar spirit to 
Lemma~\ref{lem:monotone}. 
From this inequality, we can mimic 
Lemmas~\ref{lem: normal families} 
and \ref{lem:Herimitan} to reduce the local uniform 
convergence to the pointwise convergence of the kernels 
on the diagonal of $\Omega \times \Omega$. 
To obtain the pointwise convergence of the kernels, 
we construct, for fixed $k_0\in\N$, an orthonormal set
$(R_{k,\n})_{k \leq k_0}$ in $H_\n (\Omega, e^{-2\n U^{\nu}} \nu)$ 
such that, for any $k \leq k_0$ and any fixed $z \in \Omega$,
\begin{equation}\label{convergencebasispolynomials}
|R_{k,\n}(z)|^2 e^{-2\n U^{\mu}(z)} 
\xrightarrow[\n \to \infty]{} |\psi_k(z)|^2.
\end{equation}

Recall that we can express the covariance kernel $\kerPol$ 
by using any orthonormal basis of 
$H_\n (\Omega, e^{-2\n U^{\nu}} \densityCurve \diff \sigma)$. 
In particular we may chose a basis which extends the orthonormal 
set $(R_{k,\n})_{ k \leq k_0}$. Bounding from below by the sum of 
the first $k_0$ terms and applying \eqref{convergencebasispolynomials}, 
we obtain a lower bound depending on $k_0$. Choosing $k_0$ large enough, 
this lower bound comes arbitrarily close 
to the already established upper bound.

\subsection*{Normal families}
We observe that
\[
\kerPol(z,z)=\hspace{-2mm}
\sup_{p\in H_\n^2(\Omega,e^{-2\n U^\nu} \nu)}
\hspace{-3mm}
\frac{|p(z)|^2\e^{-2\n U^\nu(z)}}{\lVert p \rVert_\n^2} 
\le
\hspace{-2mm}
\sup_{f\in H^2(\Omega,\e^{-2\n U^\nu} \nu)}
\hspace{-3mm}
\frac{|f(z)|^2\e^{-2\n U^\nu(z)}}{\lVert f\rVert_\n^2},
\]
where, in this section, $\lVert \cdot\rVert_{\n}$ refers to the norm in
$L^2(\partial\Omega,\e^{-2\n U^\nu}\nu)$.
On the other hand, we may notice that,
since 
$f \in H^2(\Omega,\nu)$
if and only if
$g=f\e^{\n \mathfrak{h}}
\in H^2(\Omega, \e^{-2\n U^\nu}\nu)$, 
\begin{align*}
S_{\Omega,\rho}(z,z) 
& =
\sup_{f\in H^2(\Omega,\nu)}
\frac{|f(z)|^2}{
\phantom{a}\lVert f\rVert_{H^2(\Omega,\nu)}^2}
=
\sup_{g\in H^2(\Omega, \e^{-2\n U^\nu}\nu)}
\frac{\phantom{a}|g(z)\e^{-\n \mathfrak{h}(z)}|^2}
{\lVert g\rVert_{H^2(\Omega, e^{-2\n U^\nu}\nu)}^2} \\
& = \sup_{g\in H^2(\Omega,e^{-2\n U^\nu} \nu)}
\hspace{-3mm}
\frac{|g(z)|^2\e^{-2\n U^\nu(z)}}{\lVert g\rVert_\n^2},
\end{align*}
which implies that, for every $z \in \Omega$,
\begin{equation} \label{eq:monotonic Szego}
\forall \n\in\N, \forall z \in \Omega, \quad  \kerPol(z,z) 
\leq S_{\Omega,\rho}(z,z).
\end{equation}
From this equation, we can mimic Lemmas~\ref{lem: normal families} and 
\ref{lem:Herimitan} to reduce the local uniform convergence 
to pointwise convergence on the diagonal of $\Omega \times \Omega$.

\subsection*{Construction of the good orthonormal basis}
Let $(\psi_k)_k$ be an orthonormal basis of the space $H^2(\Omega, \nu)$, 
consisting of functions holomorphic on a slightly larger set $\Omega'$, 
containing $\overline{ \Omega}$. For instance, 
one may take $\psi_k=\sqrt{\phi'}\phi^{k}e^{-\fs}$, 
where $\phi$ is a fixed conformal map of $\Omega$ onto the unit disk $\D$, 
which extends conformally past the boundary of $\Omega$.
We define
the function $F_{k,\n}$ on $\mathbb C$ by
\[
F_{k,\n}=\chi\psi_k\e^{\n \mathfrak{h}}
\] 
where $\chi$ denotes a smooth non-negative cut-off
function which is identically one in a neighborhood of $\overline{\Omega}$ 
and vanishes identically on a neighborhood of $\C \setminus \Omega'$.
By construction, $F_{k,N}$ is a holomorphic
function on a neighborhood of $\overline{\Omega}$ 
and $(F_{k,N})_k$ is an orthonormal family in 
the weighted Hardy space $H^2(\Omega,e^{-2\n U^{\mu}}\diff \nu)$. 
Indeed, this follows from the identity
\[
F_{k,\n}\overline{F_{j,\n}} e^{-2\n U^{\nu}}\diff \nu
=\psi_k\overline{\psi_j}\diff\nu,
\]
which holds since $\chi$ is identically one on $\Gamma$
and since, by construction, 
\[
\e^{-2\n U^\nu} = \big| \e^{-N \mathfrak h} \big|^2.\]
Now, choose a $C^{1,1}$-smooth subharmonic function 
$V:\mathbb C \to \mathbb R$, 
harmonic on a neighbourhood of $\overline \Omega$, 
that coincides with $U^\nu$ on $\overline \Omega$,
satisfies 
$\limsup_{z \to \infty}
 \{V(z) - \log|z|\} < \infty$ 
and such that 
there is a  $\delta>0$
with
\[
\Delta V(z) \geq \delta 
\mbox{ and } V(z) \geq \Re \mathfrak{ h}(z) 
\text{ whenever } \nabla \chi (z) \neq 0.\]
This function can be constructed as in the
proof of
Lemma~\ref{lem:changeofV} 
(taking $\varphi$ from
\eqref{eq:VasIntegralPhi} to be zero
on a neighborhood of $0$).
By Lemma~\ref{lem:horm} applied with $\phi =
2\n V$ and
\[
\bar \partial
F_{k,\n} = 
(\bar \partial \chi)\psi_k\e^{\n \mathfrak{h}},
\]
there exists a function $v_{k,\n}$ such that
$\bar \partial  v_{k,\n}=\bar \partial F_{k,\n}$ and
\begin{align}
\label{eq:horm-pf-eq12}
\int_\C \big|v_{k,\n}\big|^2 \e^{-2\n V}
\diff \m 
& \le \int_{\supp \nabla \chi}
\frac{\big|\bar \partial F_{k,\n}\big|^2}{2 \n \Delta V}
\e^{-2\n V}\diff\m \\
&\le
\int_{\supp \nabla \chi}
\frac{\big|\bar \partial \chi \big|^2}{2 \n \delta}
\big|\psi_{k} \big|^2
\e^{-2\n \left(V- \Re \mathfrak h \right)}
\diff\m \xrightarrow[\n \to \infty]{} 0.
\end{align}
In particular, due to the
harmonicity of $V$ 
and the holomorphicity of
$v_{k,\n}$ on a neighborhood
of $\overline{\Omega}$ we have also that
$v_{k,\n} \e^{-N V}\xrightarrow[\phantom{aa}]{} 0$
uniformly on $\overline{\Omega}$. Then,
\[R_{k,\n} = F_{k,\n} -  v_{k,\n} \mbox{ is holomorphic
on } \mathbb C \quad \mbox{ and } \quad
\int_\C \big|R_{k,\n}\big|^2 \e^{-2\n V}
\diff \m < \infty\] which implies that
$R_{k,\n}$ is a polynomial
of degree at most $\n-2$.
Moreover, 
\begin{equation}
\label{eq:PointwiseConvergenceR}
R_{k,\n}e^{-\n \mathfrak h} 
\xrightarrow[\n \to \infty]{} \psi_k
\end{equation}
uniformly on $\overline{\Omega}$.
In particular, we have that
\begin{equation}
\label{eq:OrthogonalityR}
\int_{\partial\Omega}
R_{k,\n} \overline{R_{m,\n}} e^{-2\n U^\nu} \diff \nu
\xrightarrow[\n \to \infty]{} 
\int_{\partial\Omega}
\psi_k \overline \psi_m \diff \nu
=
\delta_{k,m}.
\end{equation}
Fix $k_0>0$ and let $(P_{k,\n})_{k=0}^{k_0}$
be the Gram-Schmidt orthonormalization 
of $(R_{k,\n})_{k=0}^{k_0}$.
This can be done
since, by \eqref{eq:OrthogonalityR},
 $(P_{k,\n})_{k=0}^{k_0}$
is a linearly independent family
for $\n$ is large enough.
By extending
$(P_{k,\n})_{k=0}^{k_0}$ to
an orthonormal
basis of $H^2_\n(\Omega, e^{-2\n U^\nu} \diff \nu)$
we see that
\[\sum_{k=0}^{k_0}
|P_{k,\n}(z)|^2 e^{-2\n U^\nu(z)} \leq 
\kerPol(z,z).
 \]
By \eqref{eq:PointwiseConvergenceR} and
\eqref{eq:OrthogonalityR},
$P_{k,\n}e^{-\n \mathfrak h} (z)
\xrightarrow[\phantom{aa}]{} \psi_k(z)$
as $\n \to \infty$
for every $z \in \Omega$ so that
\[\sum_{k=0}^{k_0} |\psi_k(z)|^2 
\leq \liminf_{\n \to \infty}
\kerPol(z,z) \]
for every $z \in \Omega$
and, by letting $k_0 \to \infty$,
we complete the proof.

\subsection*{Independence} The proof of the independence assertion in the
theorem is identical to the proof of Theorem~\ref{thm:indep}, because
the Szeg\H{o} kernels also have the reproducing property. 
In actuality this proves that the Gaussian fields are independent in
the limit, which then implies that the zero processes are also independent.
We omit the details.

\appendix
\section{Determinantal Point Processes}
\label{app:David}

In this section we recall some basic properties of
determinantal point processes that are needed in this paper.
More information
may be found in the book
\cite{HoughKrisPeresVirag} of Hough, Krishnapur, Peres and Vir\'{a}g or 
in Soshnikov's survey \cite{Soshnikov}.

Let $U$ be an open subset of $\mathbb C$
and denote by $\mathcal C_U$ the set of
locally finite integer-valued positive
measures $\mu$ on $U$.
Any element $\mathcal X$ of
$\mathcal C_U$ can we written as
$\mathcal X = \sum_{\lambda \in \Lambda}
\delta_{x_\lambda}$
for some locally finite family
$\left(x_\lambda\right)_{\lambda \in \Lambda}$ of points
of $U$. This allows us
to think of $\mathcal X \in \mathcal
C_U$ as a multi-set, and we use notations such as
$\sum_{x \in \mathcal X} f(x)=
\int f \mathrm d\mathcal X
$ or
$\# (A\cap \mathcal X)
= \mathcal X(A)$.
Endow $\mathcal C_U$ with the coarsest topology
that makes the maps
\[\mathcal X \in \mathcal C_U
\mapsto
\sum_{x \in \mathcal X} f(x)\]
continuous, for every
compactly supported
continuous function
$f: U \to \mathbb R$.
By a \emph{point process}
on $U$
we mean a Borel probability measure on 
$\mathcal C_U$.

\begin{defn}[Determinantal point process]
Suppose that $\mu$ is
an atomless locally finite
measure on $U$  and let
$\ker:U \times U \to \mathbb C$
be continuous and Hermitian, i.e.,
$\ker(z,w) = \overline{K(w,z)}$
for every $z,w \in U$.
A point process $\mathcal X$ on $U$
is the \emph{determinantal point process}
associated with
the kernel $K$ with respect
to the measure $\mu$
if  for every
$\ker \geq 1$
and for every
pairwise disjoint
measurable sets
$A_1,\dots,A_k$,
\[
\mathbb E
\left[ \# (A_1
\cap \mathcal X) \dots
\# (A_k \cap \mathcal X) \right]
=
\int_{A_1 \times \dots \times A_k}
\hspace{-6mm}
\det\big( \ker(x_i,x_j)_{1\le i,j\le k}\big)
\mathrm d \mu^{\otimes_k}
(x_1,\dots,x_k),
\]
where $\mu^{\otimes_k}$ denotes the $k$-fold
product measure of $\mu$ with itself. 
\end{defn}

There is an important
freedom in the choice of
$\ker$ and $\mu$ that we use
throughout the article. Namely,
if $f: U \to \mathbb C\setminus \{0\}$
is continuous and if we let
\begin{equation}
\label{eq:ChangeOfK}
\widetilde \ker(z,w) =
f(z) \ker(z,w) \overline{f(w)}
\quad \text{ and }
\diff \widetilde \mu
=|f|^{-2}\mathrm d\mu
\end{equation}
then the determinantal point
process associated
to $K$ with respect to $\mu$ is also
is also determinantal with kernel  $\widetilde \ker$
with respect to the measure
$\widetilde {\mu}$.

A well-known example
of a determinantal point process
is the following.
We recall that $\C_{\n-1}[z]$
denotes the set of polynomials
of degree at most
$\n-1$.

\begin{prop}[Coulomb gases are determinantal at $\beta=2$]
\label{prop:CoulombAreDeterminantal}
Let $\mu$
be an atomless
measure
on $\mathbb C$ such that for some $N\ge 2$,
$\int (1+|x|^{2})^{N-1}
\diff \mu(x) < \infty$. Then,
the measure
$\gamma$ on $\mathbb C^n$
defined by
\[
\diff \gamma(x_1,\dots,x_\n)
=\prod_{i<j}^\n
|x_i - x_j|^2
\diff \mu^{\otimes_\n}
(x_1,\dots,x_\n)
\]
is a finite measure, and we obtain a probability measure
by writing 
$\mathbb{P}_\n=\frac{\gamma}{\gamma(\C^\n)}$.
Then a random element $(X_1,\dots,X_\n)$ that
 follows the law 
$\mathbb{P}_\n$ induces
a determinantal point process
$\{X_1,\dots,X_\n\}$
with respect to the reference measure $\mu$
and associated with the kernel 
\[
\ker_\n(z,w) =
\sum_{k=1}^\n
p_k(z) \overline{p_k(w)},
\]
where
$\{p_1,\dots,p_{\n}\}$
is an orthonormal basis
of $L^2(\mathbb C,\mu)\cap\C_{\n-1}[z]$.
\end{prop}

\begin{proof}
We first remark that the determinant
$\det \big(\ker_\n(x_j,x_k)_{1\le j,k \le \n})$
is proportional to $\prod_{i<j} |x_i-x_j|^2$ so
the measure $\gamma$ is proportional to 
$\det \big(\ker_\n(x_j,x_k)_{1\le j,k \le \n})\diff\mu^{\otimes_\n}$.
Since $\ker_\n$ is an orthogonal projection onto
a space of dimension $\n$ we have the reproducing property
\[
\int \ker_\n(x,y) \ker_\n(y,z) \diff \mu (y) = \ker_\n(x,z)
\]
and $\int \ker_\n(x,x) \diff \mu(x) = \n$. This implies 
(see Lemma~5.27 in Deift's book \cite{Deiftbook} for details) that
\[
\int  \det(\ker_\n(x_i,x_j)_{1\le i,j\le k})
\diff \mu(x_k) = (\n-k+1)
\det\big( \ker_\n(x_i,x_j)_{1\le i,j\le k-1}\big),
\]
from which the theorem follows by iterated integration.
\end{proof}

Usually, we use
Proposition~\ref{prop:CoulombAreDeterminantal}
with
$\diff \mu =
\e^{-2V}
\diff \m_{\mathbb C}$
for some
continuous function
$V: \C \to \R$.
In that case,
by  \eqref{eq:ChangeOfK}
we may choose
$\m_{\C}$
as the reference measure
for the determinantal point
process $\mathcal X$ and
$\ker(z,w)\e^{-(V(z)+V(w))}$
as the kernel.

The next result is Proposition 3.10 in the work of
Shirai and Takahashi
\cite{ShiraiTakahashi}.

\begin{prop}[Uniform convergence of kernels]
\label{prop:UCVofKernels}
Let $U$ be an open
set of $\mathbb C$ and, for
every positive integer $\n$, let
$\ker_\n:U \times U \to \mathbb C$
be a continuous
Hermitian function.
Fix an atomless
locally finite measure $\mu$
on $\C$
and, for each $\n$, suppose
$\mathcal X_\n$ is a determinantal
point process associated with
$\ker_\n$ with respect to $\mu$. Then, if
$\ker_\n$ converges uniformly on compact sets to
$\ker:U \times U \to \C$,
there exists a determinantal
point process $\mathcal X$
associated with $\ker$ with respect to
$\mu$ such that
\[
\mathcal X_\n
\xrightarrow[\n \to \infty]
{\text{law}} \mathcal X.
\]
\end{prop}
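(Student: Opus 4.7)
The plan is to use the standard characterization of weak convergence of point processes via Laplace functionals: a sequence $\mathcal{X}_\n$ converges in law to $\mathcal{X}$ if and only if
\[
L_\n(f) := \mathbb{E}\bigl[\exp\bigl(-\textstyle\sum_{x\in\mathcal X_\n} f(x)\bigr)\bigr] \longrightarrow L(f)
\]
for every non-negative, compactly supported continuous $f:U\to[0,\infty)$, where $L(f)$ is the Laplace functional of some point process $\mathcal X$. This reduces matters to showing convergence of these Laplace functionals and then identifying the limit as the DPP with kernel $\ker$.

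The key is the Fredholm determinant formula for the Laplace functional of a DPP. Fix $f\ge 0$ with compact support $C\subset U$ and set $\phi := 1-e^{-f}$, which is continuous, supported in $C$, and takes values in $[0,1]$. Viewing $\ker_\n$ as an integral operator $\hat{\ker}_\n$ on $L^2(C,\mu)$, one has
\[
L_\n(f) = \det\bigl(I - M_{\sqrt{\phi}}\, \hat{\ker}_\n \, M_{\sqrt{\phi}}\bigr),
\]
where $M_g$ denotes multiplication by $g$. Because $\ker_\n$ is continuous on the compact set $C\times C$ and $\mu$ is finite on $C$, the compressed operator $M_{\sqrt\phi}\hat{\ker}_\n M_{\sqrt\phi}$ is trace class; indeed, being non-negative and self-adjoint, its trace equals $\int_C \phi(x)\ker_\n(x,x)\,\mathrm d\mu(x)$.

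Uniform convergence $\ker_\n\to \ker$ on $C\times C$ immediately yields convergence in Hilbert--Schmidt norm on $L^2(C,\mu)$ and also convergence of traces via the diagonal. For non-negative self-adjoint operators, weak convergence together with convergence of traces upgrades to trace-norm convergence; this then gives continuity of the Fredholm determinant, so that $L_\n(f)$ converges to $\det(I-M_{\sqrt\phi}\hat \ker M_{\sqrt\phi})$. It remains to verify that $\ker$ defines a genuine DPP. Hermiticity and non-negativity of correlation functions $\det(\ker(x_i,x_j))_{i,j=1}^k\ge 0$ pass to the uniform limit, and for each compact $C$ the compressed operator $\hat{\ker}|_C$ has spectrum in $[0,1]$ as a (trace-norm) limit of operators sharing that spectral property. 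An appeal to the Macchi--Soshnikov existence theorem then produces the desired limiting DPP $\mathcal X$ with kernel $\ker$ and reference measure $\mu$.

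The main obstacle is the upgrade from uniform convergence of kernels (which only guarantees Hilbert--Schmidt convergence in general) to trace-norm convergence of the compressed operators, which is what drives continuity of the Fredholm determinant. The positivity inherent to DPP kernels, combined with the diagonal trace identity $\operatorname{tr}(M_{\sqrt\phi}\hat{\ker}_\n M_{\sqrt\phi})=\int \phi\, \ker_\n(x,x)\,\mathrm d\mu$, is precisely what makes this step succeed, and is the reason one can bypass more delicate compactness arguments in $\mathcal C_U$.
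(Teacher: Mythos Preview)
The paper does not supply its own proof of this proposition; it simply states the result and cites \cite[Proposition~3.10]{ShiraiTakahashi}. So there is no ``paper's proof'' to compare against, only the reference.

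Your approach via Laplace functionals and Fredholm determinants is one of the standard routes and is essentially correct. One remark: the step where you upgrade Hilbert--Schmidt convergence plus convergence of traces to trace-norm convergence (for non-negative self-adjoint operators) is true but is itself a non-trivial fact that deserves a citation or a short argument. A slightly more self-contained alternative, which may in fact be closer to what is done in \cite{ShiraiTakahashi}, is to bypass trace-class considerations entirely by expanding the Fredholm determinant as the absolutely convergent series
\[
\det\bigl(I - M_{\sqrt\phi}\hat K_\n M_{\sqrt\phi}\bigr)
= \sum_{k\ge 0}\frac{(-1)^k}{k!}\int_{C^k}\prod_{i=1}^k\phi(x_i)\,\det\bigl(K_\n(x_i,x_j)\bigr)_{i,j=1}^k\,\diff\mu^{\otimes k},
\]
and using Hadamard's inequality $\bigl|\det(K_\n(x_i,x_j))\bigr|\le k^{k/2}\bigl(\sup_{C\times C}|K_\n|\bigr)^k$ together with the uniform bound $\sup_\n\sup_{C\times C}|K_\n|<\infty$ to get a dominating summable series; termwise convergence then follows from uniform convergence of $K_\n$ on $C\times C$. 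This avoids the operator-theoretic lemma while using only the continuity and Hermitian hypotheses actually stated. Either way, your identification of the limit via Macchi--Soshnikov is fine: positivity of the correlation determinants and the local spectral bound in $[0,1]$ both pass to uniform limits.
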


In the next proposition
we have disjoint
open subsets
$U_1$ and $U_2$ of $\C$, kernels
$\ker_1: U_1 \times U_1
\to \mathbb C$ and
$\ker_2: U_2 \times U_2
\to \mathbb C$ and measures
$\mu_1$ on $U_1$ and
$\mu_2$ on $U_2$.
Both kernels may be
extended by zero to the rest of
$(U_1 \cup U_2) \times
(U_1 \cup U_2)$ so that
it makes sense to write
$\ker_1 + \ker_2$. Similarly,
we may extend both measures by zero
to the rest of $U_1 \cup U_2$ so that
it makes sense to write $\mu_1+\mu_2$.

\begin{prop} \label{prop:UnionOfDPP}
Let $U_1$ and $U_2$ be disjoint open
subsets of $\mathbb C$.
Suppose that for $j=1,2$, $\mathcal X_j$
is a determinantal point process on $U_j$
associated with $K_j$
with respect to $\mu_j$.
Then, if $\mathcal{X}_1$ and $\mathcal{X}_2$
are independent, the union $\mathcal X_1 \cup \mathcal X_2$
is a determinantal point process
on $U_1 \cup U_2$
associated with
$\ker=\ker_1 + \ker_2$ with respect to
$\mu=\mu_1 + \mu_2$.
\end{prop}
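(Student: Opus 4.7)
The plan is to verify the defining moment identity for $\calX = \calX_1 \cup \calX_2$ directly, using the independence of the two processes (which must be assumed for the statement to hold, since the kernel $\ker_1+\ker_2$ on the disjoint union $U_1 \cup U_2$ is ``block diagonal'') and the fact that $\ker_1$ and $\ker_2$ have disjoint supports in the product space. Once this is done, the conclusion is immediate from the definition.

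Fix $k \ge 1$ and let $A_1,\ldots,A_k \subset U_1 \cup U_2$ be pairwise disjoint measurable sets. Decompose $A_j = A_j^{(1)} \sqcup A_j^{(2)}$ where $A_j^{(i)} = A_j \cap U_i$. Since $\#(A_j \cap \calX) = \#(A_j^{(1)} \cap \calX_1) + \#(A_j^{(2)} \cap \calX_2)$, I would expand
\[
\prod_{j=1}^k \#(A_j \cap \calX) = \sum_{\sigma:\{1,\ldots,k\}\to\{1,2\}} \prod_{j=1}^k \#(A_j^{(\sigma(j))} \cap \calX_{\sigma(j)}).
\]
Taking expectations and using independence of $\calX_1$ and $\calX_2$, each summand factors into a product of expectations that can be computed via the defining identity for $\calX_1$ and $\calX_2$:
\[
\mathbb E\Bigl[\prod_{j=1}^k \#(A_j \cap \calX)\Bigr] = \sum_{\sigma} \int_{\prod_j A_j^{(\sigma(j))}} \det\bigl(\ker_{\sigma(a)}(x_a,x_b)\bigr)_{a,b:\sigma(a)=\sigma(b)} \prod_{j=1}^k \diff\mu_{\sigma(j)}(x_j),
\]
where the determinant in the integrand is to be understood as a product of the two sub-determinants obtained by restricting to indices with $\sigma(a)=1$ and $\sigma(a)=2$ respectively.

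To match this with the asserted formula, I would next observe that because $U_1$ and $U_2$ are disjoint, for any $(x_1,\ldots,x_k)$ with $x_j \in U_{\sigma(j)}$ the matrix $((\ker_1+\ker_2)(x_a,x_b))_{1\le a,b\le k}$ is, after permutation of rows and columns grouping indices by $\sigma$, block diagonal (the off-diagonal blocks vanish identically because, say, $\ker_1(x_a,x_b)=0$ whenever $x_b \in U_2$, and similarly for $\ker_2$). Hence
\[
\det\bigl((\ker_1+\ker_2)(x_a,x_b)\bigr) = \prod_{i=1,2} \det\bigl(\ker_i(x_a,x_b)\bigr)_{a,b:\sigma(a)=\sigma(b)=i},
\]
which is exactly the integrand obtained above. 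Finally, summing over $\sigma$ reassembles the product measure
\[
\sum_\sigma \prod_{j=1}^k \mu_{\sigma(j)}\vert_{A_j^{(\sigma(j))}} = (\mu_1+\mu_2)^{\otimes k}\Big\vert_{A_1 \times \cdots \times A_k},
\]
so the expectation equals $\int_{A_1\times\cdots\times A_k} \det((\ker_1+\ker_2)(x_a,x_b))\,\diff(\mu_1+\mu_2)^{\otimes k}$, as required.

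The only thing that really needs care is the bookkeeping in the expansion over $\sigma$ and the identification of the block-diagonal structure; there is no analytic difficulty. The mild conceptual point worth highlighting is that the statement tacitly requires $\calX_1$ and $\calX_2$ to be independent, since the kernel $\ker_1+\ker_2$ on the disjoint union already encodes independence of the restrictions to $U_1$ and $U_2$; without this assumption the joint intensities of $\calX_1 \cup \calX_2$ would not in general factor in the way used above.
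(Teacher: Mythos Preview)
Your proof is correct and follows essentially the same route as the paper's: decompose each $A_j$ into its $U_1$- and $U_2$-parts, expand over $\sigma:\{1,\ldots,k\}\to\{1,2\}$, use independence to factor the expectation, and match each summand via the block-diagonal structure of $(\ker_1+\ker_2)$ on points lying in different $U_i$. Your remark that independence of $\calX_1$ and $\calX_2$ is a tacit hypothesis is apt; the paper's own proof invokes it explicitly even though the proposition statement does not.
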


\begin{proof}Recall that, with the above convention, we have $\ker=\ker_j$
on $U_j\times U_j$.
Fix $k \geq 1$ and let $A_1, \dots, A_k$ be $k$ measurable
subsets of $U_1 \cup U_2$.
Introduce the sets $A_i^{(1)} = A_i \cap U_1$
and $A_i^{(2)} = A_i \cap U_2$
so that
\[
A_i=A_i^{(1)} \uplus A_i^{(2)}
\quad \text{ with } \quad 
A_i^{(1)} \subset U_1 
\text{ and } A_i^{(2)} \subset U_2. 
\]
We want to show that
\[
\mathbb E
\left[ \# (A_1
\cap \mathcal X) \dots
\# (A_k \cap \mathcal X) \right]
= 
\int_{A_1 \times \dots \times A_k}
\hspace{-6mm}
\det\big(\ker(x_i,x_j)_{1\le i,j\le k}\big)
\mathrm d \mu^{\otimes_k}
(x_1,\dots,x_k).
\]
By linearity of the expectation, we have that
\[
\mathbb E
\left[ \# (A_1
\cap \mathcal X)\dots
\# (A_k \cap \mathcal X) \right]
=\hspace{-3mm}
\sum_{\sigma_1,\dots,\sigma_k
\in \{1,2\}}\mathbb E
\big[ \# (A_1^{(\sigma_1)}
\cap \mathcal X_{\sigma_1}) \dots
\# (A_k^{(\sigma_k)} \cap \mathcal X_{\sigma_k}) \big].
\]
On the other hand,
if $F_k(x_1,\dots,x_k)=
\det\big( K(x_i,x_j)_{i,j 
\in \{1,\dots,k\}}\big)$,
we also have
\[
\int_{A_1 \times \dots \times A_k}
F_k \mathrm d \mu^{\otimes_k}
= \sum_{\sigma_1,\dots,\sigma_k
\in \{1,2\}}
\int_{A_1^{(\sigma_1)} \dots \times A_k^{(\sigma_k)}}
F_k \mathrm d( \mu_{\sigma_1}
\otimes \dots \otimes 
\mu_{\sigma_k}).
\]
Then, we only need to show that
\[
\mathbb E
\big[ \# (A_1^{(\sigma_1)}
\cap \mathcal X) \dots
\# (A_k^{(\sigma_k)} \cap \mathcal X) \big]
=\int_{A_1^{(\sigma_1)} 
\dots \times A_k^{(\sigma_k)}}
F_k \mathrm d( \mu_{\sigma_1}
\otimes \dots \otimes 
\mu_{\sigma_k}).
\]
Since $F_k$ is symmetric, we may suppose that
there is $m \in \{0,\dots,k\}$ such that
\[
\sigma_1=\dots =\sigma_m = 1\quad
\text{and}\quad \sigma_{m+1} = \dots = \sigma_k = 2.
\]
By the independence of $\mathcal X_1$
and  $\mathcal X_2$, it holds that
\begin{align*}
\mathbb E&
\big[ \# (A_1^{(\sigma_1)}
\cap \mathcal X_{\sigma_1}) \dots
\# (A_k^{(\sigma_k)} \cap \mathcal X_{\sigma_k}) \big]
\\&=\mathbb E
\left[ \# (A_1^{(1)}
\cap \mathcal X_{1}) \dots
\# (A_m^{(1)}
\cap \mathcal X_{1}) \right]
\mathbb E \left[
\# (A_{m+1}^{(2)}
\cap \mathcal X_{2})  \dots
\# (A_k^{(2)} \cap \mathcal X_{2}) \right]			
\\
&=\int_{A_1^{(1)}
\times \dots \times A_m^{(1)}}
F_m \mathrm d\mu_{1}^{\otimes_m}
\int_{A_{m+1}^{(2)}
\times \dots \times A_{k}^{(2)}}
F_{k-m} \mathrm d\mu_{2}^{\otimes_{k-m}}.
\end{align*}
Moreover, by the calculation
of the determinant of a diagonal block matrix,
\[F_m(x_1,\dots,x_m)
F_{k-m}(x_{m+1},\dots,x_k)
=F_k(x_1,\dots,x_k)\]
whenever $x_1,\dots,x_m \in U_1$
and $x_{m+1},\dots,x_k \in U_2$
which, by Fubini's theorem,
allows us to conclude the proof.
\end{proof}

\section{Harmonic extension of potentials from uncharged regions}
\label{app:proof-harm-ext}
In this section we prove Proposition~\ref{prop:extension1}
i.e. that under the admissibility assumptions on $\mu$ of Definition~\ref{def:admissibility},
the harmonic function $U^\mu\vert_{\Omega}$ extends
harmonically past $\partial\Omega$ to a 
function $\calU=\calU^\mu$ on an open set
$\Omega'$, containing $\overline{\Omega}$.

\begin{proof}[Proof of Proposition~\ref{prop:extension1}]
First of all, notice that
the statement
is local. Indeed,
suppose we show that
for every $z \in \partial \Omega$
there exists a small open disk
$D$ centered at
$z$ and a harmonic function
on $D$ that coincides with
$U^\mu$ on $\Omega \cap D$.
Then, we may cover $\partial \Omega$ by a family of open sets
$\{D_\lambda\}_{\lambda \in \Lambda}$ 
that satisfy that intersections are connected, that
if $D_{\lambda_1} \cap D_{\lambda_2} \neq \emptyset$ then
$D_{\lambda_1} \cap D_{\lambda_2} \cap \Omega \neq \emptyset$, 
and such that for each $\lambda \in \Lambda$ there exists
a harmonic function
$f_\lambda:D_\lambda \to \mathbb R$ satisfying
$f_\lambda|_{\Omega \cap D_\lambda} = U^\mu|_{\Omega \cap D_\lambda}$.
We may define a function $f$ on $\mathcal N := \cup_\lambda D_\lambda$
by $f(z) = f_\lambda(z)$ if $z \in U_\lambda$. Notice that if
$z \in D_{\lambda_1} \cap D_{\lambda_2}$ then
$D_{\lambda_1} \cap D_{\lambda_2} \neq \emptyset$ so that
$D_{\lambda_1} \cap D_{\lambda_2} \cap \Omega \neq \emptyset$, i.e.,
$f_{\lambda_1} = f_{\lambda_2}$ on a nonempty open set which implies that
$f_{\lambda_1} = f_{\lambda_2}$ on $D_{\lambda_1} \cap D_{\lambda_2}$
since $D_{\lambda_1} \cap D_{\lambda_2}$ is connected.

Now fix $z \in \partial \Omega$
and 
choose a parametrization
of $\partial \Omega$ near $z$, i.e.,
a holomorphic function
$\varphi: 
(-1,1)
\times (-1,1)
\to \mathbb C$
such that 
$\varphi|_{(-1,1)
\times\{0\}}$
is a usual parametrization
of $\partial \Omega$
around $z$ with
$\varphi(0) = z$
and $\varphi'(0) \neq 0$. 
By taking
a restriction of $\varphi$
if necessary
and reparameterizing,
we have that
 $\varphi\big((-1,1)
\times (-1,0]\big)
\subset \mathbb C \setminus 
\overline{\Omega} $
and
$\varphi\big((-1,1)
\times (0,1)\big)
\subset \Omega $.
Let us use the notation
 $A=\varphi\big((-1,1)
\times (-1,0]\big)$
and write
\[U^\mu = U^{\mu|_A} + 
U^{\mu|_{A^c}}.\]
Notice that
$U^{\mu|_{A^c}}$
is already harmonic on 
$\varphi\big((-1,1)
\times (-1,1)\big)$
so that we only need to
extend harmonically
$U^{\mu|_A}$ 
 to $A$.

Now, the new setting is the following.
We define the square
$\mathcal S = (-1, 1)
\times (-1, 1)$
and $\mu = 
\varphi^* (\mu|_A)$
which is a measure on (the redefined)
$A = (-1,1)\times(-1,0]$.
We have a function 
$U: \mathcal S
\to \mathbb R$
such that $\Delta U = 
2\pi\mu$
and we wish to find a harmonic extension of
$U|_{(-1,1) 
\times (0,1)}$
to the open square
$\mathcal S$. The first 
fact to notice is that
being able to extend $U$
is independent of $U$
as soon as 
$\Delta  U = 2\pi \mu$.
Indeed, if $V$
were another function
such that $\Delta V = 2\pi\mu$
and if $\mathcal U$
were a harmonic extension
of $U|_{(-1,1) 
\times (0,1)}$,
the function
$\mathcal U
- U +  V$
would be a harmonic extension of 
$ V|_{(-1,1) 
\times (0,1)}$.
So, we have the freedom
to choose $U$
and we will use the nice choice
\[U (z)=
U^{\mu}(z)
= \int \log|z-w|
\mathrm d\mu(w).\]
Recall that 
$\mathrm d\mu  = \densityBulk  \mathrm d m_{\mathcal S} +
\densityCurve \mathrm d m_{(-1,1)}$
for some functions
$ \densityBulk: (-1,1)\times (-1,0]  \to \mathbb R$ and
$\densityCurve: (-1,1) \to \mathbb R$ which are real-analytic.
Denote 
$\densityBulk_y(x) =  \densityBulk(x,y)$
and, for any $y \in (-1,0]$, define the measure 
$\mathrm d\mu_y =  \densityBulk_y \mathrm d m_{(-1,1) \times \{y\}} $.
Denote by $\eta$ the
measure on $(-1,1)$ such that
 $\mathrm d \eta = \densityCurve \mathrm dm_{(-1,1)}$. Then,
since $\mu = 
\int_{-1}^0 
\mu_y \mathrm d y + \eta$, we have
\[U(z)
= 
\int_{-1}^0
\left(\int \log|z-w| \mathrm d \mu_s(w)
\right) \mathrm d s
+ 
\int \log|z-w| \mathrm d \eta(w).
\]
We assume, without loss of generality,
that $\densityBulk(x,y)= \sum_{m,n \geq 0} a_{m,n} x^m y^n$  
for every $(x,y) \in (-1,1) \times (-1,0]$
and we extend it to  $\densityBulk: \mathcal S \times (-1,1) \to \mathbb C$ 
by assuming, without loss of generality, that
$\densityBulk(z,y) =  \sum_{m,n \geq 0} a_{m,n} (z - iy)^m y^n$
is convergent.
Moreover, we can assume that the series
defining $\densityBulk$ converges
in $(2\mathcal S) \times (-1,1)$
and that $\densityCurve$ can be extended to a holomorphic function
on $2 \mathcal S$.
Fix $y \in (-1,0]$ and take a path $\gamma^{(y)}$ in the lower half-plane 
that connects 
$(-1,y)$ and $(1,y)$ avoiding
$\mathcal S$.  
Define $\mathcal U$ by
\[\mathcal U(z)
= \int_{-1}^0
\left(\mathrm{Re}
\int_{\gamma^{(y)}} \log(z-w) \densityBulk_y(w) \mathrm d w
\right) \mathrm d y
+
\mathrm{Re}\int_{\gamma^{(0)}}
\log(z-w) \densityCurve(w) \mathrm d w,\]
where 
$\log:\mathbb C \setminus i \mathbb R_- \to \mathbb C$
is any branch of 
the logarithm. Note that, for $w\in \gamma^{(y)}$ and
$z\in \mathcal{S}$,  $z-w$ belongs to $\C\setminus i\R_{-}$.
Since $\densityBulk_y$ and $\densityCurve$ are
holomorphic on $\mathcal S$, 
$U(z) = \mathcal U(z)$ for $z\in \mathcal S \setminus A$
and, since $\mathcal U$ is harmonic
on $\mathcal S$, the proof is complete.
\end{proof}

\subsection*{Acknowledgements}
We want to thank Mikhail Sodin, Haakan Hedenmalm and Ofer Zeitouni 
for stimulating discussions. We also want to 
thank Mikhail Sodin and Ofer Zeitouni for
supporting a visit of G.-Z.\ to Israel.

The figures were created using IPE.

\subsection*{Financial support}
R.\ B. was supported by the ERC Advanced grant No.\ 692452 
and by the MSC Grant No.\ 898769 (RandPol).
D.\ G.-Z. is supported by the grant ANR-16-CE40-0024.
A.\ N. was supported by ISF Grant 1903/18 and 
BSF Start up Grant No.\ 2018341.
A.\ W. was supported by ERC Advanced Grant No.\ 692616,
KAW Foundation Grant No.\ 2017.038, and by Grant No.\ 2022-03611
from the Swedish Research Council (VR).

\end{document}